\documentclass[12pt,leqno]{amsart}
\usepackage{amssymb,amsmath,amsthm,bm}
\usepackage{graphicx}
\usepackage{color}
\usepackage[textsize=tiny]{todonotes}
\usepackage[normalem]{ulem}
\usepackage[bookmarksopen,bookmarksdepth=3,colorlinks,citecolor=red,pagebackref,hypertexnames=true]{hyperref}
\usepackage[msc-links,nobysame,non-sorted-cites, initials]{amsrefs}
\usepackage[inline]{enumitem}
\usepackage{mathtools}
\mathtoolsset{showonlyrefs}

\definecolor{darkblue}{RGB}{0,0,160}

\definecolor{indigo}{rgb}{0.4,0,0.9}

\usepackage[lining]{libertine}   
\usepackage{cabin}
\usepackage[libertine]{newtxmath}
\usepackage[T1]{fontenc}

\def\e{{\rm e}}

\def\d{{\rm d}}
\def\dist{{\rm dist}}

\def\R {\mathbb{R}}

\def\loc {{\mathrm{loc}}}

\def\pt {{\partial_t}}

\def\1 {{\mbox{\boldmath 1}}}

\def \ps {\partial_s}

	 \def\XXint#1#2#3{{\setbox0=\hbox{$#1{#2#3}{\int}$}
	     \vcenter{\hbox{$#2#3$}}\kern-.5\wd0}}

\def \no#1#2#3 {{\bf #1} (#3), #2.}
\def \eds#1#2#3 {#1, #2, #3.}

\newtheorem{theorem}{Theorem}[section]
\newtheorem{proposition}[theorem]{Proposition}
\newtheorem{lemma}[theorem]{Lemma}
\newtheorem{corollary}[theorem]{Corollary}
\theoremstyle{definition}

\newtheorem{remark}[theorem]{Remark}
\newtheorem*{warn*}{Two  warnings}
\numberwithin{equation}{section}
\numberwithin{figure}{section}

 \makeatother

\begin{document}

\title[Coleman--Gurtin with Beltrami conductivity]{The Planar Coleman--Gurtin Model with Beltrami Conductivity}

	\author[F.  Di Plinio]{Francesco Di Plinio} 
\address[F.  Di Plinio]{Dipartimento di Matematica e Applicazioni ``R.  Caccioppoli'' \  \newline\indent Universit\`a di Napoli ``Federico II'', Via Cintia, Monte S.  Angelo 80126 Napoli, Italy}
\email{\href{mailto:francesco.diplinio@unina.it}{\textnormal{francesco.diplinio@unina.it}}}
\thanks{Research  partially supported by the FRA 2022 Program of Universit\`a degli Studi di  Napoli Federico II, project ReSinAPAS--Regularity and Singularity in Analysis, PDEs, and Applied Sciences.}
  
\subjclass[2010]{Primary: 35B41,  35K05, 30C62. Secondary: 45K05, 47H20}
\keywords{Coleman--Gurtin equation, heat conduction with memory, Dafermos history space;
Beltrami operator, anisotropic diffusion, maximal parabolic regularity;
exponential attractor, global attractor.}

\begin{abstract}
This article addresses the planar Coleman--Gurtin heat equation with memory on a bounded domain, with rough
anisotropic diffusion $A_\mu$, typical of heterogeneous or composite media and  encoded by a Beltrami coefficient $\mu\in L^\infty(\Omega)$ satisfying  $\|\mu\|_{\infty}<1$.
 First, under no additional smoothness assumptions on $\mu$, solutions with $H^1_0(\Omega)$-based initial data enter a time-averaged $L^\infty(\Omega)$   regime, and instantaneously regularize into the second-order graph space $D(A_\mu)$. 
 Assuming in addition $\mu\in W^{1,2}(\Omega)$, this regularization upgrades to $W^{2,p}(\Omega)$ for every $1<p<2$, and we construct
\emph{regular} global and exponential attractors of  finite fractal dimension, for both the $L^2(\Omega)$ and $H^1_0(\Omega)$-based dynamics. The proof combines the instantaneous smoothing method of Chekroun, Di Plinio, Glatt-Holtz and Pata with   maximal parabolic regularity for divergence-form operators with measurable coefficients, and with  planar quasiconformal Beltrami estimates recently obtained in work by Green, Wick and the author.

\end{abstract}

	\maketitle
	
\section{Introduction}

\label{sec:main}

This article is concerned with hereditary heat propagation in two-dimensional media exhibiting a complex, anisotropic and not necessarily smoothly varying internal structure, where  the standard Laplacian is replaced   with a divergence-form operator arising from the theory of quasiconformal mappings.

\subsection{Hereditary heat conduction in anisotropic media: the model}

Let \[
u=u(x,t): \Omega \times [0,\infty) \to \mathbb R\] be the temperature fluctuation field associated to the medium occupying the open, bounded region $\Omega \subset \R^2$.
The material properties of the medium are encoded by a measurable \emph{Beltrami coefficient}
$\mu\in L^\infty(\Omega)$ satisfying the uniform ellipticity condition
\[
\|\mu\|_{L^\infty(\Omega)}\le k<1.
\]
This coefficient determines a  real symmetric conductivity matrix $\sigma_\mu(x)$ via the classical
Beltrami--conductivity correspondence,
\begin{equation}\label{eq:ellipticitymat}
\sigma_\mu(x) \coloneqq \frac{1}{1-|\mu(x)|^2}
\begin{pmatrix}
|1-\mu(x)|^2 & -2\,\Im\mu(x) \\[2mm]
-2\,\Im\mu(x) & |1+\mu(x)|^2
\end{pmatrix}.
\end{equation}
For later use we introduce the constants
\[
m(k)\coloneqq\frac{1-k}{1+k},\qquad M(k)\coloneqq\frac{1+k}{1-k}.
\]
Then $\sigma_\mu$ is uniformly elliptic and bounded in the quantitative form
\begin{equation}\label{eq:ellipticity}
m(k)|\xi|^{2}\le \sigma_\mu(x)\xi\cdot\xi\le M(k)|\xi|^{2}
\qquad\text{ a.e. }x\in\Omega,\, \forall \xi\in\mathbb R^{2}.
\end{equation}
Physically, $\sigma_\mu$ represents the thermal conductivity tensor of a material whose microstructure
distorts the heat flux, twisting and stretching it in a manner prescribed by the complex dilatation $\mu$.
Thereby, define the elliptic Beltrami operator $A_\mu$ with homogeneous Dirichlet boundary conditions as
\[
A_\mu u \coloneqq -  \mathrm{div}(\sigma_\mu \nabla u), \qquad u|_{\partial \Omega}=0.
\]
In the isotropic limit $\mu \equiv 0$, the matrix reduces to the identity, and we recover the classical
operator $A=-\Delta$. Denoting by $\lambda_1(-\Delta;\Omega)$ the first Dirichlet eigenvalue of $-\Delta$,
the Rayleigh quotient and \eqref{eq:ellipticity} yield the uniform lower bound
\begin{equation}
	\label{e:lbeigen}
\lambda_1(A_\mu)\ge \underline{\lambda}(k,\Omega) \coloneqq m(k)\lambda_1(-\Delta;\Omega) 
\end{equation}
for the first eigenvalue of $A_\mu$.

The material is subject to a constitutive relation for the heat flux of  Coleman--Gurtin type. That is, the heat flux depends not only on the current temperature gradient but also on the summed past history of the gradient. This leads to the  integro-differential semilinear  equation \begin{equation}\label{eq:intro-model}
    \partial_t u(\cdot, t) + A_\mu u(t) + \int_0^\infty \kappa(s) A_\mu u(\cdot, t-s)  \,\d s + \phi(u(\cdot, t)) = f, \qquad t>0
\end{equation}
supplemented with the initial condition
\begin{equation}
	\label{e:intro-initial}
	u(x,t) = u_0(x,t), \qquad x\in \Omega,\, t\leq 0,
\end{equation}
where the given function $u_0:\Omega \times (-\infty, 0]\to \mathbb R$ accounts for the \emph{initial past history} of $u$.
Here, $\kappa$ is a nonnegative summable memory kernel characterizing the rate at which the material forgets past thermal states. The nonlinear term $\phi(u)$ represents a temperature-dependent reaction or radiative loss.
Finally,  $f$ is an  external heat supply which is assumed to be time-independent.

\subsection{Assumptions on the nonlinearities and the memory kernel}
 We assume the domain $\Omega \subset \mathbb{R}^2$ is bounded and $\partial \Omega$ is of class  $\mathcal{C}^2$. The structural assumptions on the parameters and nonlinear terms are as follows.

\subsubsection*{The Beltrami coefficient}
The dilatation $\mu:\Omega \to \mathbb C$ satisfies the ellipticity condition \begin{equation} \label{e:ellipticitymu}
	\|\mu\|_{L^\infty(\Omega)} \le k < 1.
\end{equation}Assumption \eqref{e:ellipticitymu} is standing throughout the article. The regularity hypothesis
 \begin{equation} \label{e:regmu}
\mu \in W^{1,p_0}(\Omega), \qquad p_0\geq 2
\end{equation}
will  only be imposed when strictly necessary.

\subsubsection*{The memory kernel}
  Assume $\kappa:[0,\infty) \to (0,\infty)$ is absolutely continuous and introduce the \emph{memory kernel generator}\footnote{The memory kernel generator is almost invariably denoted by the letter $\mu$ in past literature on the subject, see for instance \cites{CGDP2010,AMN}. We choose to reserve the symbol $\mu$ to the Beltrami coefficient and use $h$ instead.} $h$ via the relation
\[
\kappa(s) = \kappa_0 - \int_0^s h(\tau) \, \d \tau.
\]
We impose the following conditions on $h$.
\begin{itemize}

\item[(K1)] The function $h: \mathbb{R}^+ \to \mathbb{R}^+$ is non-negative, non-increasing, and summable with total mass
\[
\int_0^\infty h(s) \, \d s = \kappa_0 > 0.
\]
Consequently, $\kappa$ is non-negative, non-increasing, and summable. We fix the normalization
\begin{equation}
	\label{e:kappanorm}
\int_0^\infty \kappa(s) \, \d s = \int_0^\infty s h(s) \, \d s = 1.
\end{equation}
\item[(K2)] (Generalized dissipation) There exists a constant $\Theta > 0$ such that
\[
\kappa(s) \le \Theta h(s) \quad \text{for a.e. } s > 0.
\]
Consequently, $\kappa $ exhibits the exponential decay property
\[
\kappa(s) \leq \kappa_0 \exp\left({-\frac{s}{\Theta}}\right), \qquad s\geq 0.
\]
\end{itemize}
\subsubsection*{External force and nonlinearities}
\begin{itemize}
\item[(F)] The time-independent forcing satisfies $f \in L^2(\Omega)$.
\item[(P1)] (Dissipativity) Let $\phi \in \mathcal{C}^3(\mathbb{R})$ with $\phi(0) =0$. Then, with reference to \eqref{e:lbeigen}
\[
\liminf_{|u| \to \infty } \phi'(u) > -\underline{\lambda}(k,\Omega).
\]
\item[(P2)] (Polynomial growth) There exist $C > 0$ and $m \ge 1$ such that
\[
\left|\phi^{(1+j)}(r)\right| \le C(1+|r|^{m-j}), \qquad \forall r \in \mathbb{R},\, j \in \{0,1,2\}.
\]
\end{itemize}

\subsection{Remark on Trudinger--Moser alternatives}
Although, for the sake of simplicity, in \textbf{(P2)}  we require arbitrary polynomial growth,
 the analysis and the elliptic estimates that follow are built so that  a weaker Trudinger--Moser type  assumption may be imposed instead, with rather minimal changes. This task is possible because of the   exponential in $\frac{1}{p_0-p}$ blow-up rate of the elliptic estimate of Proposition \ref{prop:W2p-A} below, which is actually made explicit in the reference \cite{DPF}, and is    left  to the interested reader.

\subsection{Physical motivation}
Models of heat propagation with \emph{memory} account for relaxation effects that are not captured by
Fourier's law. In loose parlance, the heat flux at time $t$ depends on a suitable weighting of the  \emph{history} of the temperature gradient,
encoding fading-memory thermodynamics. This viewpoint
originates in the continuum thermodynamics program of Coleman--Gurtin,  
  and the subsequent theory of heat conductors with memory developed by Gurtin--Pipkin and
others, see, e.g.\ \cites{CGZAMP67,GP68,Nun71}.

In heterogeneous or composite planar media, the relevant conductivity is often anisotropic and may
vary abruptly across microscopic interfaces.  In two dimensions, such anisotropy admits a particularly
natural geometric parametrization. Namely, uniformly elliptic planar conductivities are encoded, up to a
normalization by a \emph{Beltrami coefficient} $\mu$, i.e.\ by the complex dilatation of a quasiconformal
distortion map rectifying the   flux lines.  This  correspondence is a standard
bridge between divergence form diffusion, quasiconformal mapping theory, and planar conductivity
problems, including inverse problems of  Calder\'on type. See   \cites{AIM,AP06} for systematic treatments.
From this perspective, \eqref{eq:intro-model} describes hereditary heat conduction in a planar medium
whose microscopic anisotropy is allowed to be in principle merely measurable under the uniform
ellipticity constraint. A prominent physical realization of this setting is found in the thermal analysis of thin fiber-reinforced composite laminates and woven polymeric nanocomposites. In such materials, the complex microscopic interfaces induce highly anisotropic, abruptly varying planar heat fluxes. These are perfectly captured by a measurable Beltrami coefficient, while the polymer matrix inherently exhibits fading-memory thermodynamics that necessitate a hereditary constitutive law, see Milton \cite{Milt} for a broader viewpoint.

\subsection{Main results and technical novelties}\label{subsec:main-results}

The aim of this paper is to prove that the hereditary Beltrami heat equation \eqref{eq:intro-model},
although driven by a \emph{rough} anisotropic conductivity encoded by a merely measurable Beltrami
coefficient $\mu$ satisfying \eqref{e:ellipticitymu}, generates a strongly dissipative semigroup whose
asymptotic behavior is captured by finite-dimensional, regular attractors. Standard references on global attractors for semigroups of Mathematical Physics are the monographs \cites{CV,HAL,TEM}. A classical account of the theory of  exponential attractors  can be found in \cite{EFNT}, see also the survey \cite{MZ} and the more recent contribution \cite{Z2023} by Zelik. Attractors for equations with memory have been at the forefront of the analysis of dissipative partial differential equations: an extremely partial list of contributions, biased towards the topic at hand, is \cites{CDP,DaneseGeredeliPata2015DCDS,DellOroMammeri2021AMO, DiPlinioGiorginiPataTemam2018JNS,GattiGrasselliMiranvillePataSquassina2005DCDS,GGMP2010, GattiGrasselliPata2004PhysD,GrasselliPata2005JEE,KloedenRealSun2011CPAA,LiYang2023JDDE, Shomberg2020TMNA}.

In our case, the  physical interpretation is that the fading-memory kernel $\kappa$ enforces an
irreversible relaxation. After a transient depending on the initial past history \eqref{e:intro-initial},
solutions enter a uniformly controlled regime and the subsequent evolution is confined to a compact set
of effective degrees of freedom, robust under perturbations and exponentially attracting. 

In the three-dimensional Euclidean setting, the works \cites{CGDP2010,AMN} exploit a partial instantaneous
regularization on the $H^1_0(\Omega)$-based phase space. Specifically, the dynamics enters $H^2(\Omega)\cap H^1_0(\Omega)$
for positive times, thereby unlocking Agmon-type inequalities to handle the semilinear term $\phi$.
This mechanism traces back to the strongly damped wave equation \cite{PZ} and related integro-differential
models \cite{DP1,DP2,DPZ}.
In the present two-dimensional anisotropic setting, the overall strategy shares several steps with \cite{CGDP2010}, but the rough Beltrami conductivity prevents any direct use of second derivative norms. 

Therefore, the first necessary novelty is   a different route to uniform control. Namely, we show that the
$H^1_0$-based dynamics enters an $L^\infty$-bounded absorbing set by combining \emph{maximal parabolic regularity}
for divergence-form operators with measurable coefficients \cites{HR,KunstmannWeis2004} with the sharp planar
Beltrami $W^{1,q}$ theory from quasiconformal analysis \cite{AIM}, see also \cite{DPF}.
This $L^\infty$ bound then yields instantaneous smoothing into the second-order graph space $\mathcal V$,
see Theorem~\ref{thm-inst}, and requires no further assumptions on $\mu$ beyond \eqref{e:ellipticitymu}.

A finer description of the asymptotic dynamics is obtained under the mild additional hypothesis
$\mu\in W^{1,2}(\Omega)$. Although the associated conductivities $\sigma_\mu$ may still be far from Lipschitz,
this Sobolev regularity allows us to use the quasiconformal elliptic estimates established in \cite{DPF} by the author and collaborators.
These are essential to control nonlinear difference terms in the continuous-dependence and squeezing
estimates underlying the construction of exponential attractors, leading to Theorem~\ref{thm:main-exp-attractor-V}.
In particular, this implies the existence of suitably regular global attractors for the semigroup on both the
$L^{2}$-based and $H^{1}_{0}$-based phase spaces,  see Corollary~\ref{cor:main-global-attractors}.
\label{Ss15}
\subsection{Structure of the paper}
Section~\ref{sec:functional-setting} sets the operator-theoretic framework for Beltrami diffusion, introduces
the past-history formulation and the memory spaces, and collects the elliptic and parabolic tools used later.
Section~\ref{sec:mainres} states the main results, from well-posedness,  dissipativity and smoothing for merely measurable
conductivities to the existence of regular finite-dimensional attractors under $\mu\in W^{1,2}(\Omega)$.
Section~\ref{sec:dissipativity-Hj} proves the semigroup properties and the dissipative estimates.
The $L^\infty$-absorbing regime for the $\mathcal H^{1}$-dynamics, obtained via maximal parabolic regularity and
planar Beltrami $W^{1,q}$ estimates, is established in Section \ref{s5}.
With this bound in hand, Section~\ref{sec:aux-Vp} derives instantaneous regularization into $\mathcal V$ and develops
the continuous-dependence estimates. When $\mu\in W^{1,2}(\Omega)$, these provide the squeezing mechanism required for
exponential attractors. Finally, Sections~\ref{sec:reg-exp-attr} and~\ref{sec:exp-attractor-V} complete the construction
of regular exponentially attracting sets and prove Theorem~\ref{thm:main-exp-attractor-V}, yielding in particular the
existence and regularity of the global attractor, cf.\ Corollary~\ref{cor:main-global-attractors}.

\section{Functional setting and elliptic toolbox}\label{sec:functional-setting}

This section contains the functional setting for the semigroup generated by \eqref{e:intro-initial} on suitably constructed phase spaces. It also serves us to recall, and adapt to our setting, the elliptic estimates on the Beltrami operator $A_\mu$ which appear in our regularization scheme. These estimates are part of the main results of \cite{DPF} by one of us and coauthors.
\begin{warn*}
Without explicit mention, we will perform certain multiplications of the relevant dynamical equations which \emph{a priori} are only formally justified provided the solution is regular enough. However, each differential inequality we thus achieve may be rigorously justified within a suitable standard Galerkin approximation scheme.

Although the Beltrami coefficient $\mu$ is in general complex-valued, the associated
conductivity matrix $\sigma_\mu$ is real symmetric by its explicit
definition \eqref{eq:ellipticitymat}. In particular, the operator
$A_\mu=-\mathrm{div}(\sigma_\mu\nabla\cdot)$ has real coefficients. 
Hence, for real initial data and past history, and real forcing, the unique solution remains
real for all $t\ge0$. For this reason, we work on real Hilbert spaces
throughout. \end{warn*}

\subsection{Operator-theoretic setting for the Beltrami diffusion}\label{subsec:operator-setting}
 
Set $V^0\coloneqq L^{2}(\Omega)$ with standard inner product \[
\langle u,v \rangle_{V^0}\coloneqq \int_\Omega u(x)  {v(x)} \,\d x\] and define  the bilinear form
\begin{equation}\label{eq:form-a-mu}
a_{\mu}(u,v)\coloneqq\int_{\Omega}\sigma_{\mu}\nabla u\cdot \nabla v\,\d x,
\qquad u,v\in H^{1}_{0}(\Omega),
\end{equation}
where $\sigma_{\mu}$ is the conductivity matrix introduced in
\eqref{eq:ellipticity}. The quantitative ellipticity \eqref{eq:ellipticity} yields
\begin{equation}\label{eq:form-bounds}
m(k)\|\nabla u\|_{L^{2}}^{2}\le a_{\mu}(u,u)\le M(k)\|\nabla u\|_{L^{2}}^{2},
\qquad \forall u\in H^{1}_{0}(\Omega).
\end{equation}
Denote by $A_{\mu}$ the positive Dirichlet realization associated with $a_{\mu}$, with
\[
D(A_{\mu})\coloneqq\left\{u\in H^{1}_{0}(\Omega): \exists f\in {V^0}\ \text{s.t.}\
a_{\mu}(u,\varphi)=\langle f,\varphi\rangle_{{V^0}}\ \forall \varphi\in H^{1}_{0}(\Omega)\right\},
\quad A_{\mu}u\coloneqq f.
\]
Then $A_{\mu}$ is self-adjoint on ${V^0}$, $A_{\mu}\ge0$, and has compact resolvent.
In particular, $-A_{\mu}$ generates a strongly continuous analytic semigroup of contractions
on ${V^0}$.
For $r\geq 0$ we define the Hilbert space  ${V^r}$ by its inner product, 
\[V^r \coloneqq D\left(A_{\mu}^{{\frac{r}{2}}}\right), \quad \left\langle  u, v\right\rangle_{V^r}\coloneqq
\left\langle A_\mu^{{\frac{r}{2}}}u,A_\mu^{{\frac{r}{2}}}v\right\rangle_{V^0},
\]
and set ${V^{-r}}\coloneqq (V^r)'$ with duality pairing
$\langle\cdot,\cdot\rangle_{-r,r}$.
By the spectral calculus, $A_\mu^{-{\frac{r}{2}}}:{V^{-r}}\to V^0$ is an isometric isomorphism,
and we identify ${V^{-r}}$ with $D(A_\mu^{-{\frac{r}{2}}})$ via
\begin{equation}\label{eq:duality-Hminus-Hplus}
\langle u,v\rangle_{-r,r}
= \left\langle A_\mu^{-{\frac{r}{2}}}u,A_\mu^{{\frac{r}{2}}}v\right\rangle_{V^0},
\qquad u\in {V^{-r}},\ v\in V^r.
\end{equation}
Notice that, by \eqref{eq:form-bounds}, ${V^1}=D(A_{\mu}^{1/2})$ coincides with
$H^{1}_{0}(\Omega)$ and the norm $\|u\|_{{V^1}}$ is equivalent to $\|\nabla u\|_{L^{2}}$,
with constants depending only on $k$.
Let $\lambda_{1}(-\Delta;\Omega)$ denote the first Dirichlet eigenvalue of $-\Delta$.
By \eqref{eq:form-bounds} and the Rayleigh quotient,
\begin{equation}\label{eq:uniform-coercivity}
\lambda_{1}(A_{\mu})\ge \underline{\lambda}(k,\Omega)\coloneqq m(k)\,\lambda_{1}(-\Delta;\Omega),
\end{equation}
which coincides with \eqref{e:lbeigen}. In particular,
\begin{equation}\label{eq:poincare-Hscale}
\|u\|_{{V^0}}\le \underline{\lambda}(k,\Omega)^{-1/2}\,\|u\|_{{V^1}},
\qquad \forall u\in {V^1},
\end{equation}
and the constant in \eqref{eq:poincare-Hscale} is uniform over all $\mu$ with
$\|\mu\|_{\infty}\le k<1$.

\subsection{Memory spaces and past history formulation}\label{subsec:memory-spaces} Hereafter, we recall the functional setting referred to as the  \emph{Dafermos history space}, rooted in the seminal work \cite{DAF}. This framework  makes it possible to phrase  integro-differential systems such as \eqref{eq:intro-model} into an autonomous system on a suitably extended phase space. 
\begin{remark} The powerful alternative setup of \emph{memory states} has been devised more recently by Fabrizio, Giorgi and Pata in \cite{FGP}. Formulating the results and proofs of the present paper within the framework of \cite{FGP} would be of independent interest, but is not pursued here. \end{remark}

For $r\in \mathbb R$, define the weighted memory spaces
\begin{equation}\label{eq:Mr-def}
{\mathcal M^{r}}\coloneqq L^{2}_{h}(\R^{+};{V^r})
=\left\{\eta:\R^{+}\to {V^r}: \|\eta\|^{2}_{{\mathcal M^{r}}}
\coloneqq\int_{0}^{\infty}h(s)\|\eta(s)\|^{2}_{{V^r}}\,\d s<\infty\right\},
\end{equation}
endowed with the Hilbert inner product
\[\langle\eta,\psi\rangle_{{\mathcal M^{r}}}
=\int_{0}^{\infty}h(s)\langle\eta(s),\psi(s)\rangle_{{V^r}}\,\d s.\]
By \textbf{(K1)} we have the basic bound
\begin{equation}\label{eq:memory-average-bound}
\left\|\int_{0}^{\infty}h(s)\eta(s)\,\d s\right\|_{{V^r}}
\le \kappa_{0}^{1/2}\,\|\eta\|_{{\mathcal M^{r}}}.
\end{equation}
Moreover, by \textbf{(K2)} and the normalization \eqref{e:kappanorm},
\begin{equation}\label{eq:kappa-vs-h}
\int_{0}^{\infty}\kappa(s)\|\eta(s)\|_{{V^r}}^{2}\,\d s
\le \Theta \int_{0}^{\infty}h(s)\|\eta(s)\|_{{V^r}}^{2}\,\d s
=\Theta\,\|\eta\|_{{\mathcal M^{r}}}^{2}.
\end{equation}
Define the operators
\[
T_r\eta\coloneqq -\ps\eta,
\qquad
D(T_r)\coloneqq\left\{\eta\in{\mathcal M^{r}}:\ \ps\eta\in{\mathcal M^{r}},\ \eta(0)=0\right\}, \qquad r\in \mathbb R
\]
where $\eta(0)$ is understood as the trace in ${V^r}$. For simplicity, we omit the subscript $r$ from $T_r$ and simply write $T$ when the domain is clear from context.

\begin{lemma} \label{lem:translation-dissipation}
Assume \textbf{(K1)}. For every $r\in\mathbb R$ and every $\eta\in D(T_r)$,
\begin{equation}\label{eq:translation-diss}
\langle T\eta,\eta\rangle_{{\mathcal M^{r}}}
=\frac12\int_{0}^{\infty}h'(s)\|\eta(s)\|_{{V^r}}^{2}\,\d s\le 0.
\end{equation}
Consequently, $T$ generates a strongly continuous semigroup of contractions on ${\mathcal M^{r}}$,
given explicitly by the right-translation
\[
\left({\e^{tT}}\eta\right)(s)=
\begin{cases}
0, & 0<s\le t,\\
\eta(s-t), & s>t.
\end{cases}
\]
\end{lemma}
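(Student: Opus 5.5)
The plan has two parts: an integration-by-parts identity on the dense subspace $D(T_r)$, and then the Lumer--Phillips theorem, with the semigroup identified explicitly.

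\emph{Identity \eqref{eq:translation-diss}.} First take $\eta\in D(T_r)$ that is smooth with compact support in $[0,\infty)$ as a $V^r$-valued function, with $\eta(0)=0$. Then $\frac{\d}{\d s}\|\eta(s)\|_{V^r}^2=2\langle \ps\eta(s),\eta(s)\rangle_{V^r}$, so
\[
\langle T\eta,\eta\rangle_{\mathcal M^r}=-\frac12\int_0^\infty h(s)\frac{\d}{\d s}\|\eta(s)\|^2_{V^r}\,\d s .
\]
Integrating by parts on $[\epsilon,N]$ gives the boundary terms $-\frac12 h(N)\|\eta(N)\|^2+\frac12 h(\epsilon)\|\eta(\epsilon)\|^2$ plus $\frac12\int_\epsilon^N h'\|\eta\|^2$. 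Here $h$ is non-increasing, so $h'$ is a nonpositive measure; where needed I read $h'$ in the measure sense, or assume $h$ absolutely continuous as is customary. The boundary term at $N$ vanishes by compact support. At $\epsilon\to0$ one may have $h(\epsilon)\to\infty$, which is the delicate point. Since $\eta(0)=0$ and $\eta$ is smooth, $\|\eta(\epsilon)\|^2=O(\epsilon^2)$. Also $h$ non-increasing and summable gives $\epsilon h(\epsilon)\le\int_0^\epsilon h\to0$. Hence $h(\epsilon)\|\eta(\epsilon)\|^2\to0$. The integral $\int_\epsilon^N h'\|\eta\|^2$ has a nonpositive integrand, so it converges monotonically, which yields the identity and the sign.

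For general $\eta\in D(T_r)$ I would argue as follows. Cut off at infinity by $\chi(s/N)$. Mollify by convolution with a kernel supported in $(0,\infty)$, i.e.\ shift to the right, which preserves $\eta(0)=0$ and stays in the weighted space because $h$ is non-increasing. This approximates $\eta$ in the graph norm. The left side is continuous in the graph norm. The right side passes to the limit by Fatou and monotone convergence, since $-h'\ge0$. This gives at least ``$\le$'', which suffices for dissipativity; I expect equality to follow as well from the same truncation argument applied directly. This approximation step, particularly controlling the boundary behaviour near $s=0$ where $h$ may be singular, is the main obstacle.

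\emph{Generation.} The estimate shows $T$ is dissipative. $T$ is densely defined, since $C_c^\infty((0,\infty);V^r)$ is dense in $\mathcal M^r$, and closed. For the range condition, given $\psi\in\mathcal M^r$ and $\lambda>0$, I solve $\lambda\eta+\ps\eta=\psi$ with $\eta(0)=0$ explicitly:
\[
\eta(s)=\int_0^s \e^{-\lambda(s-\tau)}\psi(\tau)\,\d\tau .
\]
To show $\eta\in\mathcal M^r$, write $\eta=\int_0^\infty \e^{-\lambda\sigma}\psi(\cdot-\sigma)\1_{\{\cdot>\sigma\}}\,\d\sigma$ and use Minkowski. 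Each shifted piece has $\mathcal M^r$-norm at most $\|\psi\|_{\mathcal M^r}$, because $h(s)\le h(s-\sigma)$. This gives $\|\eta\|\le\lambda^{-1}\|\psi\|$, and then $\ps\eta=\psi-\lambda\eta\in\mathcal M^r$. Lumer--Phillips then yields a contraction semigroup.

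\emph{Identification.} The right-translation family $S(t)$ is a contraction on $\mathcal M^r$ by the same monotonicity of $h$. It is a semigroup, and it is strongly continuous by density of compactly supported continuous functions. Its generator agrees with $T$ on smooth elements of $D(T)$, which form a core. Hence $S(t)=\e^{tT}$.
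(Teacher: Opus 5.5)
Your route is the paper's: integrate by parts using $\eta(0)=0$ and $h'\le 0$, then deduce generation. The paper does this in one line on all of $D(T_r)$ and calls the translation formula standard. Your extra steps are correct and supply what the paper leaves implicit:
\begin{itemize}
\item the boundary term at a possibly singular $s=0$;
\item the explicit resolvent $\eta(s)=\int_0^s\e^{-\lambda(s-\tau)}\psi(\tau)\,\d\tau$ with the Minkowski bound $\lambda^{-1}\|\psi\|_{\mathcal M^r}$, for the range condition;
\item the core argument identifying $\e^{tT}$ with right translation.
\end{itemize}

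The one step that does not go through as written is equality in the identity for general $\eta\in D(T_r)$. Fatou only gives $\langle T\eta,\eta\rangle_{\mathcal M^r}\le\frac12\int_0^\infty h'\|\eta\|_{V^r}^2\,\d s$. That suffices for dissipativity, hence for the semigroup claim, but not for the stated identity, and ``I expect equality'' is not an argument. There are two easy fixes.

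\emph{Fix (a).} Apply your smooth-case identity to differences of approximants:
\[
\int_0^\infty(-h')\|\eta_n-\eta_m\|_{V^r}^2\,\d s=-2\langle T(\eta_n-\eta_m),\eta_n-\eta_m\rangle_{\mathcal M^r}\to0 .
\]
So $(\eta_n)$ is Cauchy in $L^2(-h'\,\d s;V^r)$. Since graph-norm convergence forces $\eta_n\to\eta$ locally uniformly (as $h\ge h(b)>0$ on $(0,b)$), that limit is $\eta$. Hence $\int(-h')\|\eta_n\|^2\to\int(-h')\|\eta\|^2$, and equality follows.

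\emph{Fix (b).} Skip approximation entirely. On $[a,b]\subset(0,\infty)$ the function $h\|\eta\|_{V^r}^2$ is absolutely continuous, with derivative $h'\|\eta\|^2+2h\langle\ps\eta,\eta\rangle_{V^r}$; the second term lies in $L^1(\R^+)$ by Cauchy--Schwarz.
\begin{itemize}
\item At $0$: since $h\ge h(a)$ on $(0,a)$ and $\eta(0)=0$, Cauchy--Schwarz gives $h(a)\|\eta(a)\|_{V^r}^2\le a\int_0^a h\|\ps\eta\|_{V^r}^2\,\d s\to0$.
\item At infinity: $h(b)\|\eta(b)\|^2$ has a limit, being a monotone term plus a convergent term and nonnegative. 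That limit must be $0$ because $h\|\eta\|^2\in L^1$.
\end{itemize}
Letting $a\to0$ and $b\to\infty$ gives the identity, with $\int_0^\infty h'\|\eta\|^2\,\d s$ finite. Option (b) is precisely what makes the paper's one-line integration by parts rigorous.
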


\begin{proof}
Let $\eta\in D(T_r)$. Using $\eta(0)=0$ and integrating by parts,
\[
\begin{split}
\langle T\eta,\eta\rangle_{{\mathcal M^{r}}}
&=-\int_{0}^{\infty}h(s)\langle\ps\eta(s),\eta(s)\rangle_{{V^r}}\,\d s
=-\frac12\int_{0}^{\infty}h(s)\ps\|\eta(s)\|_{{V^r}}^{2}\,\d s \\ &
=\frac12\int_{0}^{\infty}h'(s)\|\eta(s)\|_{{V^r}}^{2}\,\d s.
\end{split}
\]
Since $h$ is nonincreasing by \textbf{(K1)}, $h'\le 0$ a.e., yielding \eqref{eq:translation-diss}.
The explicit representation is the standard formula for the right-translation semigroup.
\end{proof}
Given $u\in L^{1}_{\loc}([0,\infty);{V^r})$ and $\eta_{0}\in{\mathcal M^{r}}$, the linear problem
\[
\pt\eta=T\eta+u,\qquad \eta(0)=\eta_{0},
\]
admits a unique mild solution $\eta\in C([0,\infty);{\mathcal M^{r}})$ with variation-of-constants
formula
\begin{equation}\label{eq:VOC-history}
\eta^{t}=\e^{tT}\eta_{0}+\int_{0}^{t}\e^{(t-\tau)T}u(\tau)\,\d\tau.
\end{equation}
Equivalently, one has the explicit representation, cf.\ \cite[(3.3)]{CGDP2010} 
\begin{equation}\label{eq:eta-representation}
\eta^{t}(s)=
\begin{cases}
\displaystyle\int_{0}^{s}u(t-y)\,\ dy, & 0\le s\le t,\\[2mm]
\eta_{0}(s-t)+\displaystyle\int_{0}^{t}u(t-y)\,\ dy, & s>t.
\end{cases}
\end{equation}
Given a sufficiently regular trajectory $u:\mathbb R \to V^r $, the Dafermos integrated  past history variable is defined  as
\begin{equation}\label{eq:dafermos-variable}
\eta^{t}(s)\coloneqq\int_{0}^{s}u(t-y)\,\d y,\qquad s\ge0.
\end{equation}
Then $\ps\eta^{t}(s)=u(t-s)$ and, since $\kappa'(s)=-h(s)$ by definition, formal
integration by parts gives the identity
\begin{equation}\label{eq:memory-ibp}
\int_{0}^{\infty}\kappa(s)A_{\mu}u(t-s)\,\d s
=\int_{0}^{\infty}\kappa(s)A_{\mu}\ps\eta^{t}(s)\,\d s
=\int_{0}^{\infty}h(s)A_{\mu}\eta^{t}(s)\,\d s,
\end{equation}
where boundary terms vanish because $\eta^{t}(0)=0$ and $\kappa(s)\to0$ as $s\to\infty$.
Accordingly, \eqref{eq:intro-model} is equivalently rewritten, in the appropriate weak sense, as
the Cauchy problem in $(u,\eta)$:
\begin{equation}\label{eq:CG-QC} \tag{CGB}
\begin{cases}
\pt u + A_{\mu}u + \displaystyle\int_{0}^{\infty}h(s)A_{\mu}\eta(s)\,\d s + \phi(u)
= f \\[2mm]
\pt \eta = T\eta + u,\\[1mm]
(u(0),\eta^{0})=(u_{0},\eta_{0}).
\end{cases}
\end{equation}
The consistency with the prescribed past history \eqref{e:intro-initial} is ensured by choosing
\begin{equation}\label{eq:initial-history}
u_{0}=u(\cdot,0),\qquad \eta_{0}(s)=\int_{0}^{s}u_{0}(\cdot,-y)\,\d y.
\end{equation}
In agreement with the notation used in Section~\ref{sec:main},  set
\begin{equation}\label{eq:phase-spaces}
\mathcal H^{j}\coloneqq {V^j}\times{\mathcal M^{j+1}}, \;j\in \mathbb N,
\qquad
\mathcal V\coloneqq {V^2}\times{\mathcal M^{2}}.
\end{equation}
We will mostly use indices $j\in \{0,1\}.$ The next lemma is standard when dealing with integro-differential systems with memory .
 
\begin{lemma}[\cite{CGDP2010}]\label{lem:memory-tail} 
Let $r\in \mathbb Z$ and suppose $\eta$ solves
\[
\pt\eta = T\eta + u,\qquad \eta(0)=\eta_0\in{\mathcal M^{r+1}},
\]
with
$
u\in L^\infty_{\loc}([0,\infty);V^r)\cap L^2_{h}([0,\infty);{V^{r+1}}).
$
Define 
\begin{equation}\label{eq:U-tail-def}
\mathcal U_{r+1}[\eta]\coloneqq \frac12 \int_0^\infty \kappa(s)\|\eta(s)\|_{{V^{r+1}}}^2\,\d s, \qquad  
0\le \mathcal U_{r+1}[\eta]\le \frac{\Theta}{2}\|\eta\|_{{\mathcal M^{r+1}}}^2.
\end{equation}
 Then, for a.e.\ $t>0$,
\begin{equation}\label{eq:U-tail-identity}
\frac{\d}{\d t}\mathcal U_{r+1}[\eta^t] + \frac12\|\eta^t\|_{{\mathcal M^{r+1}}}^2
= \int_0^\infty \kappa(s)\,\langle u(t),\eta^t(s)\rangle_{{V^{r+1}}}\,\d s.
\end{equation}

\end{lemma}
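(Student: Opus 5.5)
Differentiate $\mathcal U_{r+1}[\eta^t]$ in time and use the equation $\pt\eta=T\eta+u=-\ps\eta+u$. The computation is first done formally for smooth data, then extended by density.

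\textbf{Formal computation.} For a regular solution, differentiation under the integral gives
\[
\frac{\d}{\d t}\mathcal U_{r+1}[\eta^t]=\int_0^\infty \kappa(s)\langle \pt\eta^t(s),\eta^t(s)\rangle_{V^{r+1}}\,\d s
=-\int_0^\infty \kappa(s)\langle \ps\eta^t(s),\eta^t(s)\rangle_{V^{r+1}}\,\d s+\int_0^\infty \kappa(s)\langle u(t),\eta^t(s)\rangle_{V^{r+1}}\,\d s .
\]
The first term equals $-\frac12\int_0^\infty\kappa(s)\ps\|\eta^t(s)\|^2_{V^{r+1}}\,\d s$. We integrate it by parts, using $\kappa'=-h$. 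The boundary term at $s=0$ vanishes because $\eta^t(0)=0$. At $s=\infty$ we need $\kappa(s)\|\eta^t(s)\|^2_{V^{r+1}}\to0$, at least along a sequence $s_n\to\infty$. The integration by parts then gives $-\frac12\int_0^\infty h(s)\|\eta^t(s)\|^2_{V^{r+1}}\,\d s=-\frac12\|\eta^t\|^2_{\mathcal M^{r+1}}$. This is exactly the identity \eqref{eq:U-tail-identity}.

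\textbf{Boundary term at infinity.} Since $\kappa\le\Theta h$ by \textbf{(K2)}, we have $\int_0^\infty \kappa\|\eta^t\|^2<\infty$. We also need $\ps(\kappa\|\eta^t\|^2)$ to be integrable. Its derivative consists of $-h\|\eta^t\|^2$, which is integrable, and $2\kappa\langle\ps\eta^t,\eta^t\rangle$. The latter is bounded by $\kappa(\|\ps\eta^t\|^2+\|\eta^t\|^2)$, which is integrable when $\ps\eta^t\in\mathcal M^{r+1}$, again by \textbf{(K2)}. So $\kappa\|\eta^t\|^2$ is integrable with integrable derivative, hence has a limit at infinity, and that limit must be $0$.

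\textbf{Justification by approximation.} For general data, take $\eta_0$ in the dense set $D(T)$ and $u$ smooth in time with values in $V^{r+1}$. Then the mild solution \eqref{eq:VOC-history} is a classical solution with $\eta^t\in D(T)$. All manipulations above are legitimate, and the right-hand side is well defined since $\int\kappa\|\eta^t\|\le(\int\kappa)^{1/2}(\Theta\|\eta^t\|^2_{\mathcal M^{r+1}})^{1/2}$ by \eqref{e:kappanorm} and \eqref{eq:kappa-vs-h}. Integrate the identity on $[t_1,t_2]$ and pass to the limit using continuity of $\eta$ in $C([0,T];\mathcal M^{r+1})$ with respect to $(\eta_0,u)$ from \eqref{eq:VOC-history}. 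The right-hand side converges because $u\to u$ in $L^1_{\loc}(V^{r+1})$ and $\sup_t\|\eta^t\|_{\mathcal M^{r+1}}$ is controlled. This gives the integrated identity, hence \eqref{eq:U-tail-identity} for a.e.\ $t$ by absolute continuity.

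The bound $0\le\mathcal U_{r+1}\le\frac\Theta2\|\eta\|^2_{\mathcal M^{r+1}}$ is immediate from \eqref{eq:kappa-vs-h}.

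\textbf{Main obstacle.} The delicate points are the vanishing of the boundary term at $s=\infty$ and the density argument in the rough setting. The key tool for both is the domination $\kappa\le\Theta h$.
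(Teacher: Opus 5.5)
Your proposal is correct and is the standard argument behind the paper's citation to \cite{CGDP2010}; the paper itself gives no proof. You differentiate $\mathcal U_{r+1}[\eta^t]$ using $\pt\eta=T\eta+u$, integrate by parts in $s$ with $\kappa'=-h$ and $\eta^t(0)=0$ as in the proof of Lemma~\ref{lem:translation-dissipation}, and pass from classical solutions to general data by density via \eqref{eq:VOC-history}; your $W^{1,1}(0,\infty)$ argument (using \textbf{(K2)}) that $\kappa(s)\|\eta^t(s)\|^2_{V^{r+1}}\to0$ is more careful than the paper's formal remark after \eqref{eq:memory-ibp}, which only invokes $\kappa(s)\to0$.
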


\begin{remark} \label{cor:CG-energy}
 Let $r\ge0$ and fix $\nu>0$. Define
\[
\mathcal{E}_{r,\nu}(u,\eta)
\coloneqq \|u\|_{V^r}^2 + \|\eta\|_{{\mathcal M^{r+1}}}^2 + 2\nu\mathcal U_{r+1}[\eta].
\]
Then $\mathcal E_{r,\nu}$ is equivalent to the $\mathcal H^r$-norm:
\begin{equation}\label{eq:E1-equiv-cor}
\|(u,\eta)\|_{\mathcal{H}^r}^2
\le \mathcal{E}_{r,\nu}(u,\eta)
\le \left(1+2\nu\Theta\right)\,\|(u,\eta)\|_{\mathcal{H}^r}^2,
\qquad (u,\eta)\in\mathcal H^r.
\end{equation}
Moreover, let $(u,\eta)$ be a sufficiently regular solution to the Cauchy problem in $(u,\eta)$
\begin{equation}\label{eq:CG-QCext}
\begin{cases}
\pt u + A_{\mu}u + \displaystyle\int_{0}^{\infty}h(s)A_{\mu}\eta(s)\,\d s  
=  g \\[2mm]
\pt \eta = T\eta + u,\\[1mm]
(u(0),\eta^{0})=\xi \in \mathcal H^r
\end{cases}
\end{equation}
 with forcing $g\in L^2_{\mathrm{loc}}([0,\infty);V^r)$.
Then, for a.e.\ $t\ge0$,
\begin{equation}\label{eq:E1-diff-ineq-raw}\begin{split}
& \quad
\frac12\frac{\d}{\d t}\mathcal E_{r,\nu}\left(u(t),\eta^t\right)
+\|u(t)\|_{{V^{r+1}}}^2
+\nu \|\eta^t\|_{{\mathcal M^{r+1}}}^2\\ &=
\langle g(t),u(t)\rangle_{V^r} + 2\nu \int_0^\infty \kappa(s)\langle u(t),\eta^t(s)\rangle_{{V^{r+1}}}\,\d s.
\end{split}
\end{equation}
 In particular, using H\"older's and  Poincar\'e's inequalities together with assumption \textbf{(K2)} entails the existence of $\nu_0>0$  depending only on parameters, such that for all $0<\nu<\nu_0$
\begin{equation}\label{eq:F-damped}
\frac{\d}{\d t}\mathcal E_{r,\nu}\left(u(t),\eta^t\right) + (1-\nu) \|u(t)\|_{{V^{r+1}}}^2 + \nu^2  \mathcal E_{r,\nu}\left(u(t),\eta^t\right)
\le 2 \langle g(t),u(t)\rangle_{V^r}.
\end{equation}
 Inequality \eqref{eq:F-damped} will be repeatedly invoked throughout with different instances of forcing $g$.

\end{remark}
 
\subsection{Beltrami elliptic toolbox}\label{sec:elliptic-toolbox} 

In what follows, the constants $C$ and those implied by the almost inequality sign $\lesssim $ may change from line to line.
Unless stated otherwise, their dependence is restricted to $(k,\Omega)$ and the
relevant exponents, and indicated as a subscript when necessary. The two results below isolate exactly where the stronger
assumption \eqref{e:regmu} is used. 

\begin{proposition} \label{prop:beltrami-W1q}
  Assume \eqref{e:ellipticitymu} holds and 
\begin{itemize}
\item[\textup{(i)}] either $2<q< q_\ast(k)\coloneqq 1+\frac1k $, or
\item[\textup{(ii)}] $\mu$ satisfies \eqref{e:regmu} for some $p_0\ge2$  and $2<q<\infty$ is arbitrary.
\end{itemize}
Let $F\in L^{q}(\Omega;\R^{2})$ and $G\in L^{q}(\Omega)$. Then $\Psi= \mathrm{div}F+G \in V^{-1}\cap W^{-1,q}(\Omega)$, and the equation
\begin{equation}\label{eq:A-div-FG}
A_{\mu}u=\Psi\qquad \text{in }V^{-1}
\end{equation}
has a unique solution $u\in V^1 \cap  W^{1,q}_{0}(\Omega) $ and
\begin{equation}\label{eq:W1q-toolbox}
\|\nabla u\|_{L^{q}(\Omega)}
\le C_{q,k,\Omega}
\left(\|F\|_{L^{q}(\Omega)}+\|G\|_{L^{q}(\Omega)}+\|u\|_{L^{2}(\Omega)}\right).
\end{equation}
 \end{proposition}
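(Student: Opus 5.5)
Existence and uniqueness in $V^1$ come from Lax--Milgram, and the work is the $W^{1,q}$ gradient bound. First, $\Psi=\mathrm{div}F+G$ is bounded on $H^1_0$ because $q>2$ and $\Omega$ is bounded: $|\langle\Psi,\varphi\rangle|\le \|F\|_{L^2}\|\nabla\varphi\|_{L^2}+\|G\|_{L^2}\|\varphi\|_{L^2}$. It also lies in $W^{-1,q}$ by duality with $W^{1,q'}_0$. Coercivity of $a_\mu$, from \eqref{eq:form-bounds} and the Poincaré inequality \eqref{eq:poincare-Hscale}, then gives a unique $u\in V^1$ with $a_\mu(u,\varphi)=\langle\Psi,\varphi\rangle$ for all $\varphi\in H^1_0(\Omega)$. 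Uniqueness in the smaller class $V^1\cap W^{1,q}_0$ follows at once. It remains to prove $\nabla u\in L^q$ together with \eqref{eq:W1q-toolbox}.

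I would argue by localization and reduction to the whole plane. Cover $\overline\Omega$ by finitely many interior discs and boundary patches, and take a subordinate partition of unity $\chi_j$. For $v=\chi_j u$ one has
\[
\mathrm{div}(\sigma_\mu\nabla v)=\mathrm{div}(\chi_jF+u\,\sigma_\mu\nabla\chi_j)+\big(\chi_j G-F\cdot\nabla\chi_j+\sigma_\mu\nabla u\cdot\nabla\chi_j\big).
\]
The lower-order terms involve only $u$ and $\nabla u$ without derivatives falling on $\sigma_\mu$. Near the boundary, flatten $\partial\Omega$ with a $\mathcal C^2$ chart. This preserves ellipticity up to a controlled change of $k$ that can be made arbitrarily small by shrinking the patches, and it keeps the structure $\sigma_{\tilde\mu}$, since planar uniformly elliptic symmetric matrices are of Beltrami form up to a determinant normalization. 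Then reflect oddly across the flat boundary to obtain a whole-plane problem. The key whole-plane fact, from \cite{AIM}, is that the Beltrami/conductivity equation $\mathrm{div}(\sigma_\mu\nabla v)=\mathrm{div}\,H$ with compactly supported $v$ satisfies $\|\nabla v\|_{L^q}\lesssim\|H\|_{L^q}$ whenever $q\in(q_\ast(k)',q_\ast(k))$.

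The proof of this fact goes through the complex gradient $\partial_z$ and the Beurling transform $\mathcal S$. The equation becomes $f_{\bar z}-\mu f_z-\nu\overline{f_z}=h$, and one inverts $I-\mu\mathcal S-\nu\overline{\mathcal S}$ on $L^q$. Astala's theorem gives invertibility exactly in the range $q<1+1/k$. Lower-order terms $G$, and the $\nabla\chi_j$ terms, are turned into divergence form by writing $G=\mathrm{div}\nabla\Delta^{-1}G$, which costs nothing because $W^{1,q}\subset L^q$ locally.

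A bootstrap then closes the estimate. The commutator terms contain $u\in L^{q_1}$ for all $q_1$ by Sobolev embedding of $H^1$, and $\nabla u\in L^2$. Starting from $L^2$, each step improves integrability of $\nabla u$ via Sobolev, since $\nabla\Delta^{-1}$ of an $L^s$ function lies in $L^{s^\ast}$ with $s^\ast=2s/(2-s)$. After finitely many steps one reaches $q$. The $\|\nabla u\|_{L^2}$ terms are absorbed by the energy estimate $\|\nabla u\|_{L^2}\lesssim \|F\|_{L^2}+\|G\|_{L^2}$, or simply kept as $\|u\|_{L^2}$ through a Caccioppoli-type bound, which yields \eqref{eq:W1q-toolbox}.

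For case (ii), the restriction $q<1+1/k$ must be removed. Since $\mu\in W^{1,2}\subset VMO$, the operator $\mu\mathcal S$ can be split. On small discs $\mu$ is close to its average $\mu_B$, a constant. The constant-coefficient operator $I-\mu_B\mathcal S-\nu_B\overline{\mathcal S}$ is invertible on every $L^q$ with $1<q<\infty$, being an affine change of variables of the Laplacian. The remainder $(\mu-\mu_B)\mathcal S$ has small norm on small balls modulo compact commutators $[\mu,\mathcal S]$, which are compact for VMO $\mu$. A Fredholm argument then gives invertibility, with injectivity coming from uniqueness of $W^{1,2}$ solutions; this is the content of \cite{DPF}.

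The main obstacle is the boundary: carrying the Beltrami structure through the $\mathcal C^2$ flattening and the reflection while keeping the relevant $L^q$ range, or the VMO smallness, uniform. I would handle it by choosing patches small enough that the distortion of $k$ stays below the margin $q_\ast(k)-q$.
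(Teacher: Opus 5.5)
\textbf{Case (i).} Here your route is sound, and it differs genuinely from the paper's. The paper never localizes. It removes $G$ by solving $-\Delta w=G$ with Dirichlet data, rewrites the whole Dirichlet problem as $(I-\mu S_\Omega)g=\Phi$ with the compressed Beurling transform on $\Omega$, and inverts on $L^q(\Omega)$ by citation, handling both cases at once through \cite{AIM} and \cite{DPF}. Your partition of unity, near-isometric flattening and odd reflection reduce everything to the whole-plane Astala estimate, which sees only $\|\tilde\mu\|_\infty$. The reflected matrix $R\tilde\sigma R$, with $R=\mathrm{diag}(1,-1)$, has the same eigenvalues, and shrinking the patches keeps $q<q_\ast(k')$. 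So this closes and is more self-contained. A small aside: no bootstrap is needed for the cutoff errors. They are already in $L^2$, and in the plane $\nabla\Delta^{-1}$ of a compactly supported $L^2$ function lies in $L^p_{\mathrm{loc}}$ for every $p<\infty$.

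\textbf{Case (ii).} Here the argument breaks at the reflection step. For the oddly reflected function to solve a divergence-form equation across $\{y_2=0\}$, the matrix below the line must be $R\tilde\sigma(y_1,-y_2)R$, which flips the sign of the off-diagonal entry. In Beltrami terms, $R\sigma_{\tilde\mu}R=\sigma_{\overline{\tilde\mu}}$. The glued coefficient is therefore $\tilde\mu$ above the line and $\overline{\tilde\mu(y_1,-y_2)}$ below it, with a jump of size $2|\Im\tilde\mu|$ across the line.

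This coefficient is not in VMO unless $\Im\tilde\mu$ vanishes on the flattened boundary. Both ingredients of your Fredholm argument then fail. Smallness of $\tilde\mu-\tilde\mu_B$ on small discs centred on the line is lost, since the mean oscillation there is comparable to $|\Im\tilde\mu|$. Compactness of $[\tilde\mu,\mathcal S]$ is also lost, since commutators with the Beurling transform are compact only for VMO symbols.

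Shrinking the patches controls only the distortion of $k$ caused by flattening; it does nothing to this jump. You also cannot repair it by choosing the linear part of the chart so that $\tilde\mu$ is real at the base point, because a $W^{1,2}$ coefficient need not be continuous. For instance, $\mu(x)=\tfrac{k}{2}\,e^{i\log\log(1/|x-x_0|)}$ near a boundary point $x_0$ lies in $W^{1,2}$, yet $\Im\tilde\mu$ takes values of size about $k/2$ of both signs in every neighbourhood of $x_0$.

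What is missing is a boundary estimate for VMO coefficients that does not pass through reflection. One option is the $L^q$-invertibility of the domain-adapted operator $I-\mu S_\Omega$, which is exactly what the paper imports from \cite{DPF}. Another is the classical $W^{1,q}$ Dirichlet theory for VMO coefficients on $C^1$ domains, which treats the half-space problem directly.
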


\begin{proof}
Existence and uniqueness of $u\in V^1\sim H^{1}_{0}(\Omega)$ follow from Lax--Milgram applied to the
coercive form \eqref{eq:form-a-mu} and the fact that $\Psi\in V^{-1}$.

To prove the $W^{1,q}$ gain, first reduce \eqref{eq:A-div-FG} to a pure divergence right-hand side.
Let $w\in W^{2,q}(\Omega)\cap W^{1,q}_{0}(\Omega)$ be the unique solution of $-\Delta w=G$ in $\Omega$.
Then $\nabla w\in L^{q}(\Omega)$ with $\|\nabla w\|_{L^{q}}\le C_{q,\Omega}\|G\|_{L^{q}}$ and
\[
\Psi=
\mathrm{div}F+G=\mathrm{div}\left(F-\nabla w\right)\qquad\text{in }V^{-1}\cap W^{-1,q}(\Omega).\]
Setting $\widetilde F\coloneqq F-\nabla w$, we thus have
\begin{equation}\label{eq:A-div-Ftilde}
A_{\mu}u=\mathrm{div}\widetilde F\qquad\text{in }V^{-1}\cap W^{-1,q}(\Omega)
\end{equation}
together with $\|\widetilde F\|_{L^{q}}\le \|F\|_{L^{q}}+C_{q,\Omega}\|G\|_{L^{q}}$.
Next, recall that  the Dirichlet problem
\eqref{eq:A-div-Ftilde} can be rewritten as a first-order inhomogeneous Beltrami system for a complex
potential $f=u+iv$, whose complex derivative $g\coloneqq \partial f$ satisfies a resolvent equation of the form
\begin{equation}\label{eq:beltrami-resolvent-form}
(I-\mu S_\Omega)g=\Phi\eqcolon \frac12\left((\widetilde F_1-i\widetilde F_2)+\mu(\widetilde F_1+i\widetilde F_2)\right)\qquad\text{in } L^q(\Omega)
\end{equation}
where $S_\Omega$ is the compressed Beurling transform on $\Omega$. Note that 
$\Phi$ depends linearly on $\widetilde F$, in particular $\|\Phi\|_{L^{q}(\Omega)}\lesssim_{q,\Omega}\|\widetilde F\|_{L^{q}(\Omega)}$; see, for instance, \cite[Ch.~16]{AIM} or \cite[\S3.2]{DPF} for more details.
Since $\nabla u$ is a real-linear combination of $\partial f$ and $\bar\partial f$,  it follows that
\begin{equation}\label{eq:grad-u-by-g}
\|\nabla u\|_{L^{q}(\Omega)}\le C_{q,k,\Omega}\left(\|g\|_{L^{q}(\Omega)}+\|\widetilde F\|_{L^{q}(\Omega)}+\|u\|_{L^{2}(\Omega)}\right).
\end{equation}
Thus it remains to bound $g$ in $L^{q}$ through \eqref{eq:beltrami-resolvent-form}.
In case \textup{(i)}, the classical $L^{q}$ invertibility of the Beltrami resolvent in the critical interval
implies that $I-\mu S$, and hence $I-\mu S_\Omega$, is invertible on $L^{q}$ for $2<q<q_\ast(k)$, with norm bound depending
only on $(q,k)$; see \cite[Theorem~14.0.4]{AIM} or the more precise form \cite[Lemma 2.3]{DPF}. In case \textup{(ii)}, since $\mu\in W^{1,p_0}(\Omega)$ with $p_0\ge2$,
one has $\mu\in\mathrm{VMO}(\Omega)$, hence $I-\mu S$ is invertible on $L^{q}$ for all $1<q<\infty$ by
\cite[Theorem~14.5.2]{AIM}. Also note that  quantitative bounds of the form
$\|(I-\mu S_\Omega)^{-1}\|_{L^{q}\to L^{q}}\le C_{q,k,\Omega}E_q(\|\mu\|_{W^{1,p_0}(\Omega)})$ follow from the resolvent
estimates in \cite[Lemma~2.3 and Theorem~A]{DPF} and the domain reduction discussed therein, with $E_q$ of exponential type.
Applying $(I-\mu S_\Omega)^{-1}$ to \eqref{eq:beltrami-resolvent-form} yields
$\|g\|_{L^{q}}\le C_{q,k,\Omega} \|\Phi\|_{L^{q}}$, which combined with
\eqref{eq:grad-u-by-g} and the estimate on $\widetilde F$ gives \eqref{eq:W1q-toolbox}.
\end{proof}

\begin{proposition} \label{prop:W2p-A}
Assume \eqref{e:ellipticitymu} holds. Also assume that \eqref{e:regmu} holds for some $p_0\geq 2.$ 
Let $p \in (1,p_0)$, $g\in L^{p}(\Omega)$ and let $u\in H^{1}_{0}(\Omega)$ be the (unique) weak solution of
\[
A_\mu u = g \qquad\text{in }H^{-1}(\Omega).
\]
Then $u\in W^{2,p}(\Omega)\cap H^{1}_{0}(\Omega)$ and
\begin{equation}\label{eq:W2p-est-A}
\|u\|_{W^{2,p}(\Omega)}
\le C\left(\|g\|_{L^{p}(\Omega)}+\|u\|_{L^{2}(\Omega)}\right).
\end{equation}
The constant $C$ depends on $p$, $\Omega$, $k$ and $\|\mu\|_{W^{1,p_0}(\Omega)}$. 
\end{proposition}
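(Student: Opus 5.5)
We want $W^{2,p}$ regularity for $A_\mu u=g$ with $\mu\in W^{1,p_0}$, $p<p_0$. The plan is to expand the divergence, absorb the first-order term using the $W^{1,q}$ bound of Proposition~\ref{prop:beltrami-W1q}(ii), and then apply a $W^{2,p}$ estimate for a non-divergence operator with VMO coefficients.

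\textbf{Step 1: regularity of the coefficients.} Since $|\mu|\le k<1$, the entries of $\sigma_\mu$ in \eqref{eq:ellipticitymat} are smooth functions of $\mu$ on the disc $\{|z|\le k\}$. Hence $\sigma_\mu\in W^{1,p_0}(\Omega)\cap L^\infty$, with
\[
\|\nabla\sigma_\mu\|_{L^{p_0}}\lesssim_k\|\nabla\mu\|_{L^{p_0}}.
\]
Because $p_0\ge2$ and we are in the plane, $\sigma_\mu\in \mathrm{VMO}(\Omega)$. This also holds in the critical case $p_0=2$, since $W^{1,2}\subset\mathrm{VMO}$ in two dimensions.

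\textbf{Step 2: reduction to non-divergence form.} Formally,
\[
-\sigma_\mu:D^2u=g+(\mathrm{div}\,\sigma_\mu)\cdot\nabla u\eqqcolon \tilde g .
\]
Choose $q$ by $1/q=1/p-1/p_0$. This gives $q\in(p,\infty)$, and $q>2$ whenever $p>2p_0/(p_0+2)$. If $q\le 2$, the bound $\nabla u\in L^2$ already suffices by Hölder, once $p$ is small enough; in general we reduce to the case $q>2$ by monotonicity in $p$, since $\Omega$ is bounded.

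For $g\in L^p$, Sobolev embedding gives $g\in W^{-1,r}$ with $1/r=1/p-1/2$, and $r>2$ when $p>1$. Writing $g=\mathrm{div}F$ with $\|F\|_{L^r}\lesssim\|g\|_{L^p}$ (for instance $F=\nabla\Delta^{-1}g$), Proposition~\ref{prop:beltrami-W1q}(ii) yields $\nabla u\in L^r$. If $r\ge q$ we are done with this step. Otherwise we bootstrap: from $u\in W^{2,p'}$ for some smaller exponent, Sobolev embedding improves the integrability of $\nabla u$.

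In either case Hölder gives
\[
\|(\mathrm{div}\,\sigma_\mu)\cdot\nabla u\|_{L^p}\le \|\nabla\sigma_\mu\|_{L^{p_0}}\|\nabla u\|_{L^q}\lesssim \|g\|_{L^p}+\|u\|_{L^2},
\]
where the last inequality uses \eqref{eq:W1q-toolbox} with $G=g$ and any needed $q<\infty$. Here it is essential that case (ii) allows arbitrary $q<\infty$. Note that $1/q=1/p-1/p_0$, and $q<\infty$ exactly because $p<p_0$; this is where the strict inequality enters.

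\textbf{Step 3: $W^{2,p}$ estimate and justification.} We apply the Chiarenza--Frasca--Longo $W^{2,p}$ theory for non-divergence operators with VMO coefficients on $C^{1,1}$ (here $C^2$) domains with Dirichlet data. This requires uniform ellipticity, which we have from \eqref{eq:ellipticity}, and gives
\[
\|u\|_{W^{2,p}}\lesssim\|\tilde g\|_{L^p}+\|u\|_{L^p}.
\]
For $p<2$ we bound $\|u\|_{L^p}\lesssim\|u\|_{L^2}$ on the bounded domain. For $p\ge2$ we interpolate or use the $L^\infty$ control that follows from $\nabla u\in L^q$ with $q>2$.

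To justify the formal computation, we mollify $\mu$ to obtain smooth coefficients $\mu_\varepsilon$ with $\|\mu_\varepsilon\|_\infty\le k$, which converge in $W^{1,p_0}$ and hence carry uniform VMO moduli. The a priori estimate holds uniformly in $\varepsilon$, and the solutions $u_\varepsilon\to u$ weakly in $H^1_0$, so we pass to the limit by weak compactness in $W^{2,p}$.

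\textbf{Main obstacle.} The hardest part is the uniformity of constants. The constants should depend on $\|\mu\|_{W^{1,p_0}}$ rather than on a VMO modulus, which is not controlled by the norm alone. I would first try to follow \cite{DPF}, treating the problem as a Beltrami equation $\bar\partial f=\mu\partial f+\dots$ and differentiating it. The derivative $\partial g$ then satisfies $(I-\mu S_\Omega)\partial g=(\partial\mu)S g+\dots$, and the right-hand side lies in $L^p$ by Hölder combined with the $L^q$ bound on $g$. One then inverts $I-\mu S_\Omega$ on $L^p$ using the quantitative resolvent bounds from \cite{DPF}, which are of exponential type in $\|\mu\|_{W^{1,p_0}}$.
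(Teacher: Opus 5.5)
Your proof is correct in outline, but its main line differs from the paper's. The paper's proof is a single citation: the quantitative resolvent estimate \cite[(3.3)]{DPF} for the Beltrami system, transferred to $A_\mu$ through the Beltrami--conductivity correspondence. That is essentially the fallback in your ``main obstacle'' paragraph.

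Your primary route is more standard and shows clearly why $p<p_0$ is needed: you pass to non-divergence form, absorb $(\mathrm{div}\,\sigma_\mu)\cdot\nabla u$ by H\"older with $1/q=1/p-1/p_0$ and Proposition~\ref{prop:beltrami-W1q}(ii), then apply Chiarenza--Frasca--Longo together with mollification. The cost is that the constants depend on the VMO modulus of $\sigma_\mu$, and this modulus must also be uniform in $\varepsilon$ for the mollified problems. The same is true of the uniform constant you need from Proposition~\ref{prop:beltrami-W1q}(ii) for $\mu_\varepsilon$.

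For $p_0>2$, Morrey's inequality controls that modulus by $\|\mu\|_{W^{1,p_0}}$. For $p_0=2$, the case actually used in the paper, it does not. The rescalings $\mu(\cdot/\varepsilon)$ of a fixed bump have bounded $W^{1,2}$ norm but mean oscillation of order one at scale $\varepsilon$. So for $p_0=2$ your route proves \eqref{eq:W2p-est-A} with a constant depending on $\mu$ itself. That is enough for the paper's applications, where $\mu$ is fixed. The stated dependence on $\|\mu\|_{W^{1,2}}$ alone, however, comes only from the DPF-type quantitative resolvent bound, which is what the paper invokes.

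There is one slip in Step~2. You cannot apply \eqref{eq:W1q-toolbox} with $G=g$, since $g\notin L^q$ in general. Instead, write $g=\mathrm{div}F$ with $F\in L^r$, where $1/r=1/p-1/2$ (take $r=q$ if $p\ge2$). Note that $r\ge q$ always holds because $p_0\ge 2$, so the bootstrap you mention is never needed.
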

\begin{proof} 
All cases are immediate consequences of the resolvent estimate \cite[(3.3)]{DPF}
combined with the Beltrami conductivity correspondence.
\end{proof}

\subsection{Parabolic maximal regularity   and first-order smoothing}
\label{subsec:MR-W-1q}

Fix $q\in(2,\infty)$ and set \[X_q\coloneqq W^{-1,q}(\Omega)=(W^{1,q'}_0(\Omega))'.\]
Throughout we use the equivalent norm
\begin{equation}\label{eq:Xq-norm}
\|\Phi\|_{X_q}\coloneqq
\inf\left\{\|F\|_{L^q(\Omega)}+\|G\|_{L^q(\Omega)}:\ \Phi={\mathrm{div}} F+G\ \text{in }\mathcal D'(\Omega)\right\}.
\end{equation}
Equivalence with the standard dual norm follows from the boundedness of $\Omega$ and
$\partial\Omega\in \mathcal C^2$; see e.g. \cite[Theorem~III.2.43]{Boyer-Fabrie-Book}.
Define the operator
$
A_{\mu,q}:D(A_{\mu,q})\subset X_q\to X_q
$
by
\begin{equation}\label{eq:def-Amuq}
D(A_{\mu,q})\coloneqq W^{1,q}_0(\Omega)\subset X_q,
\qquad
\langle A_{\mu,q}u,\varphi\rangle_{X_q,W^{1,q'}_0}
\coloneqq \int_\Omega \sigma_\mu \nabla u\cdot \nabla\varphi\,\d x
\end{equation}
for all $u\in W^{1,q}_0(\Omega)$ and $\varphi\in W^{1,q'}_0(\Omega)$.
Equivalently, $A_{\mu,q}u=-\mathrm{div}(\sigma_\mu\nabla u)$ in $\mathcal D'(\Omega)$, and the right-hand side
belongs to $W^{-1,q}(\Omega)$ because $\sigma_\mu\nabla u\in L^q(\Omega;\mathbb R^2)$.
We now recall a classical maximal regularity result for the abstract Cauchy problem
\begin{equation}\label{eq:ACP-Xq}
\pt z + A_{\mu,q}z = \Phi,\qquad z(0)=z_0\in X_q, \qquad \Phi\in L^r_{\mathrm{loc}}(\mathbb R^+;X_q).
\end{equation}
A standard reference for the general theory of maximal regularity is the treatise \cite{KunstmannWeis2004}. We refer instead to  a precise result on distribution spaces, see \cite{HR}.
\begin{proposition}\cite[Theorem 5.4]{HR}
\label{prop:MR-W-1q}
Assume \eqref{e:ellipticitymu} and let $r\in (1,\infty)$. Then $A_{\mu,q}$ has maximal parabolic $L^r$ regularity on $X_q$.
That is,  for every $T>0$ and every $\Phi\in L^r(0,T;X_q)$ there exists a unique
\[
z\in W^{1,r}(0,T;X_q)\cap L^r(0,T;D(A_{\mu,q}))
\]
satisfying \eqref{eq:ACP-Xq} a.e.\ on $(0,T)$, together with the estimate
\begin{equation}\label{eq:MR-est}
\|z\|_{W^{1,r}(0,T;X_q)}+\|z\|_{L^r(0,T;D(A_{\mu,q}))}
\le C_{\mathrm{MR}}\left(\|z_0\|_{X_q}+\|\Phi\|_{L^r(0,T;X_q)}\right),
\end{equation}where $C_{\mathrm{MR}}=C_{\mathrm{MR}}(r,q,k,\Omega,T)$.
\end{proposition}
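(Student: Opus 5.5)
The plan is to obtain Proposition~\ref{prop:MR-W-1q} as an instance of the general maximal regularity theory for divergence-form operators with real measurable uniformly elliptic coefficients on negative Sobolev spaces, as developed in \cite{HR}. Three points need checking: that $A_{\mu,q}$ falls within the class treated there; that the relevant hypotheses (isomorphism property, Gaussian-type or $\mathcal H^\infty$ bounds) hold in our planar setting; and that the general abstract theory turns these into \eqref{eq:MR-est}.

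\textbf{Structure.} By \eqref{eq:ellipticitymat} and \eqref{eq:ellipticity}, $\sigma_\mu$ is real, symmetric, measurable and uniformly elliptic, with constants $m(k),M(k)$. The domain $\Omega$ has $\mathcal C^2$ boundary, and the Dirichlet part of the boundary is all of $\partial\Omega$. These are exactly the geometric and coefficient assumptions of \cite{HR}. In that setting the relevant sequence is:
\begin{itemize}
\item the $L^2$ realization $A_\mu$ is self-adjoint and nonnegative (subsection~\ref{subsec:operator-setting}), and is invertible by \eqref{eq:uniform-coercivity};
\item it therefore has a bounded $\mathcal H^\infty$ calculus on $L^2$;
\item by De Giorgi--Nash--Moser and Davies--Gaffney estimates, the semigroup $\e^{-tA_\mu}$ has Gaussian kernel bounds;
\item these transfer the calculus to $L^p$, and by duality and interpolation to $W^{-1,q}$, provided $A_{\mu,q}+1\colon W^{1,q}_0\to W^{-1,q}$ is an isomorphism.
\end{itemize}

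\textbf{Main obstacle: the isomorphism.} The key step, and the one I expect to be hardest, is verifying that $A_{\mu,q}\colon W^{1,q}_0(\Omega)\to X_q$ is a topological isomorphism for the given $q$. This is the ``isomorphism property'' hypothesis of \cite[Theorem 5.4]{HR}. For $q$ close to $2$ it follows from Gröger/Šneĭberg stability; Proposition~\ref{prop:beltrami-W1q} provides it for $2<q<q_\ast(k)$ in general, and for all $q$ when \eqref{e:regmu} holds. Uniqueness comes from Lax--Milgram together with $W^{1,q}_0\subset H^1_0$. Because Proposition~\ref{prop:MR-W-1q} is stated for arbitrary $q\in(2,\infty)$ under \eqref{e:ellipticitymu} alone, I would restrict to $q$ in the isomorphism range (in particular $q<q_\ast(k)$) if necessary. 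This is exactly the range used later for $L^\infty$ bounds via Sobolev embedding $W^{1,q}_0\hookrightarrow L^\infty$ for $q>2$.

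\textbf{Conclusion.} Once the isomorphism holds, $D(A_{\mu,q})=W^{1,q}_0$ with equivalent graph norm. The bounded $\mathcal H^\infty$ calculus of angle $<\pi/2$ on the UMD space $X_q$ then gives $\mathcal R$-sectoriality, and Weis' theorem \cite{KunstmannWeis2004} yields maximal $L^r$ regularity for every $r\in(1,\infty)$ on finite intervals $(0,T)$. Two remaining points are routine:
\begin{itemize}
\item \emph{Initial datum.} For $z_0\in X_q$ the estimate should be read with $z_0$ in the trace space $(X_q,D(A_{\mu,q}))_{1-1/r,r}$. I would handle this by splitting $z=\e^{-tA_{\mu,q}}z_0+z_1$, where $z_1$ solves the problem with zero data, and controlling the free part by analyticity.
\item \emph{Constant.} The dependence of $C_{\mathrm{MR}}$ on $T$ enters only through the shift $A_{\mu,q}+1$ used on finite intervals.
\end{itemize}
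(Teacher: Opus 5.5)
Your proposal follows the paper's route (the paper's proof is nothing more than the citation of Theorem~5.4 of Haller-Dintelmann and Rehberg), and both of your caveats are genuine corrections to the statement as written. First, with $D(A_{\mu,q})\coloneqq W^{1,q}_0(\Omega)$ one must stay in the isomorphism range, which under $\|\mu\|_{\infty}\le k$ alone is guaranteed by Proposition~\ref{prop:beltrami-W1q} only for $2<q<q_\ast(k)$ (the only range used later). Second, for $z_0$ merely in $X_q$ the estimate on all of $(0,T)$ is false (take $\Phi=0$), so $\|z_0\|_{X_q}$ must be replaced by the norm of the trace space $(X_q,D(A_{\mu,q}))_{1-1/r,r}$, or the left-hand side restricted to $(T_1,T)$ with $T_1>0$.

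Two small precisions. Bare analyticity controls $\e^{-tA_{\mu,q}}z_0$ only away from $t=0$, so the trace-space version rests on the real-interpolation characterization of that space rather than on analyticity alone. Also, for $q\ge2$ the passage from $L^q$ to $W^{-1,q}$ goes through the square-root isomorphism $(A_\mu+1)^{1/2}\colon L^q\to W^{-1,q}$; the isomorphism property is needed only to identify the resulting domain with $W^{1,q}_0(\Omega)$.
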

 
The next proposition packages the \emph{Beltrami-specific identification} of $D(A_{\mu,q})$ with
$W^{1,q}_0(\Omega)$ and the resulting $L^\infty$ smoothing.  

\begin{proposition} 
\label{prop:first-order-smoothing-MR}
Assume \eqref{e:ellipticitymu} holds and   fix $2<q<q_*(k)$.
Let $r\in(1,\infty)$, $T>0$, and let $z$ be the unique solution to \eqref{eq:ACP-Xq} given by
Proposition~\ref{prop:MR-W-1q}. Then for every $0<T_1<T_2$ 
\begin{equation}\label{eq:MR-W1q-est}
\|z\|_{L^r(T_1,T_2;W^{1,q})}
\le C\,
\left(\|A_{\mu,q}z\|_{L^r(T_1,T_2;X_q)}+\|z\|_{L^r(T_1,T_2;L^2)}\right),
\end{equation}
\end{proposition}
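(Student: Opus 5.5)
The estimate is a pointwise-in-time elliptic bound, integrated in time. For a.e. $t\in(T_1,T_2)$, Proposition~\ref{prop:MR-W-1q} gives $z(t)\in D(A_{\mu,q})=W^{1,q}_0(\Omega)$ and $\Psi(t)\coloneqq A_{\mu,q}z(t)\in X_q$. I will show, for every $w\in W^{1,q}_0(\Omega)$,
\begin{equation}\label{eq:pointwise-W1q-plan}
\|w\|_{W^{1,q}}\le C_{q,k,\Omega}\left(\|A_{\mu,q}w\|_{X_q}+\|w\|_{L^2}\right),
\end{equation}
then apply it with $w=z(t)$, raise to the power $r$, integrate over $(T_1,T_2)$, and use Minkowski's inequality in $L^r(T_1,T_2)$. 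This yields \eqref{eq:MR-W1q-est} with a constant independent of $r$, $T_1$ and $T_2$. Measurability of $t\mapsto z(t)\in W^{1,q}$ is inherited from $z\in L^r(0,T;D(A_{\mu,q}))$.

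The proof of \eqref{eq:pointwise-W1q-plan} proceeds in three steps.

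\textbf{Step 1: decomposition.} Fix $w\in W^{1,q}_0$ and set $\Psi=A_{\mu,q}w$. By the definition \eqref{eq:Xq-norm} of the norm, choose $F\in L^q(\Omega;\R^2)$ and $G\in L^q(\Omega)$ with $\Psi=\mathrm{div}F+G$ and $\|F\|_{L^q}+\|G\|_{L^q}\le 2\|\Psi\|_{X_q}$.

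\textbf{Step 2: identification with the $H^1_0$ solution.} Since $q>2$ and $\Omega$ is bounded, $W^{1,q}_0\subset H^1_0=V^1$ and $W^{1,2}_0\subset W^{1,q'}_0$ (because $q'<2$). Hence the identity $\int\sigma_\mu\nabla w\cdot\nabla\varphi=\langle\Psi,\varphi\rangle$, valid for $\varphi\in W^{1,q'}_0$, holds in particular for all $\varphi\in H^1_0$. So $w$ is a weak solution of $A_\mu w=\Psi$ in $V^{-1}$. By the uniqueness in Proposition~\ref{prop:beltrami-W1q}(i), which applies since $2<q<q_*(k)$, $w$ coincides with the solution constructed there.

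\textbf{Step 3: gradient bound.} Estimate \eqref{eq:W1q-toolbox} gives $\|\nabla w\|_{L^q}\le C(\|F\|_{L^q}+\|G\|_{L^q}+\|w\|_{L^2})\le C(\|\Psi\|_{X_q}+\|w\|_{L^2})$. Since $w\in W^{1,q}_0$, Poincaré's inequality controls $\|w\|_{L^q}$ by $\|\nabla w\|_{L^q}$, which completes \eqref{eq:pointwise-W1q-plan}.

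The only delicate point is Step 2: the solution produced by maximal regularity lives in $W^{1,q}_0$, whereas Proposition~\ref{prop:beltrami-W1q} is phrased for the $H^1_0$ Lax--Milgram solution. The embeddings above reconcile the two, and all constants depend only on $(q,k,\Omega)$, with no regularity assumed on $\mu$.
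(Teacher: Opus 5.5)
Your proposal is correct and follows essentially the paper's own route: apply Proposition~\ref{prop:beltrami-W1q}(i) pointwise in $t$ to $z(t)$, using a near-optimal decomposition $A_{\mu,q}z(t)=\mathrm{div}F+G$, and then integrate the resulting bound over $(T_1,T_2)$. Your Step~2 (identifying the $W^{1,q}_0$ element with the Lax--Milgram solution via $W^{1,q}_0\subset H^1_0\subset W^{1,q'}_0$) and the Poincar\'e step that controls the full $W^{1,q}$ norm rather than only $\nabla z$ are details the paper leaves implicit, and both are correct.
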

\begin{remark} \label{rem:29} Notice that, for the solution $z$ to \eqref{eq:ACP-Xq},
\[
\begin{split}
 \|A_{\mu,q}z\|_{L^r(T_1,T_2;X_q)}&=\|\Phi-\pt z\|_{L^r(T_1,T_2;X_q)} \leq \|z\|_{W^{1,r}(0,T;X_q)}+ \| \Phi\|_{L^r(T_1,T_2;X_q)},  \\  \|z\|_{L^r(T_1,T_2;L^2)} &\leq C \|z\|_{L^r(T_1,T_2;D(A_{\mu,q}))} =C \|z\|_{L^r(T_1,T_2;W_0^{1,q}(\Omega))}.
 \end{split}
\]
Inserting the last display into  \eqref{eq:MR-W1q-est}, and  appealing to \eqref{eq:MR-est} together with Sobolev embedding $W^{1,q}_0(\Omega)\hookrightarrow L^\infty(\Omega)$ leads to the \emph{a priori estimate} for the solution of \eqref{eq:ACP-Xq}
\begin{equation}\label{eq:MR-Linfty-est}
\|z\|_{L^r(T_1,T_2;L^\infty(\Omega))}
\le C \|z\|_{L^r(T_1,T_2;W^{1,q}_0(\Omega))} \leq C \left(\|z_0\|_{X_q}+\|\Phi\|_{L^r(0,T;X_q)}\right).
\end{equation}
\end{remark}
\begin{proof}[Proof of Proposition \ref{prop:first-order-smoothing-MR}]
By  Proposition~\ref{prop:MR-W-1q}, we have
$z\in W^{1,r}(0,T_2;X_q)\cap  L^r(0,T_2;D(A_{\mu,q}))$, hence
$A_{\mu,q}z\in L^r(0,T_2;X_q)$ and in particular for a.e.\ $t\in [0,T_2]$,
\[
A_{\mu,q}z(t)\in X_q,\qquad \text{and } \ A_{\mu,q}z(t)={\mathrm{div}} F(t)+G(t)
\]
for some $F(t),G(t)\in L^q$ with $\|F(t)\|_{L^q}+\|G(t)\|_{L^q}$ arbitrarily close to
$\|A_{\mu,q}z(t)\|_{X_q}$ by the definition \eqref{eq:Xq-norm}.
Applying Proposition~\ref{prop:beltrami-W1q} pointwise in $t$ with $u=z(t)$ gives
\[
\|\nabla z(t)\|_{L^q}
\le C(q,k,\Omega)\,\mathcal C_{q,\mu}\,
\left(\|A_{\mu,q}z(t)\|_{X_q}+\|z(t)\|_{V^0}\right)
\quad\text{for a.e.\ }t,
\]
which implies \eqref{eq:MR-W1q-est} upon integration over $(T_1,T_2)$.
\end{proof}

\section{Main results}\label{sec:mainres}

This section presents the main dynamical results for the 
Coleman--Gurtin-Beltrami model \eqref{eq:CG-QC}.  The results we obtain do parallel, to a wide extent, the 
dynamical scheme of \cites{CGDP2010,AMN} for the three-dimensional homogeneous setting. On the other hand, the regularization mechanisms must differ, as explained in \S\ref{Ss15},  in order to account for the more delicate Beltrami-specific first and second order
elliptic estimates    
\subsection{Standing assumptions and basic dissipative structure}\label{subsec:main-assumptions}

Throughout, $\Omega\subset\R^2$ is a bounded domain with $\partial\Omega\in C^2$.
We assume the uniform ellipticity condition \eqref{e:ellipticitymu},  on the Beltrami coefficient $\mu$,
the kernel hypotheses \textbf{(K1)}--\textbf{(K2)} with normalization \eqref{e:kappanorm}
the forcing assumption \textbf{(F)}, and the nonlinearity assumptions \textbf{(P1)}--\textbf{(P2)}. Below, the  
  \emph{structural constants} $C>1,0<c<1$, 
   and \emph{structural positive increasing functions} $\mathcal I$
 are allowed to vary at each instance, and depend only on the physical parameters involved in the standing assumptions recalled above or otherwise specified when necessary. Throughout,   $B_{\mathcal X}(R)$ stands for the ball of radius $R$ and center at the origin in the Banach norm $\mathcal X$.

The   relevant phase spaces in this section are ${\mathcal H^0}, {\mathcal H^1} $ and $\mathcal V$,   introduced in \eqref{eq:phase-spaces}. We begin by stating the following two background results, whose proof largely mirrors the Euclidean case.

\begin{proposition}
	
 \label{prop:main-H0}
  Problem \eqref{eq:CG-QC} generates a strongly continuous semigroup
$S(t):\mathcal H^{0}\to\mathcal H^{0}$ admitting  an absorbing ball
$\mathbb B_{0}\subset\mathcal H^{0}$.
Moreover, for each $R>0$    \begin{equation}
	\label{e:H0dis}
	\sup_{\xi \in B_{\mathcal H^0}(R)} \left[
\|S(t)\xi\|_{\mathcal H^0}^2+\int_t^{t+1}\|u_\xi(\tau)\|_{{V^1}}^2\,\d\tau \right]\le \mathcal I(R) \e^{-ct}+C, \qquad t>0
\end{equation}

\end{proposition}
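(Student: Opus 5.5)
The plan is to follow the classical Galerkin plus energy scheme of \cite{CGDP2010}, with the Euclidean Laplacian replaced by the self-adjoint operator $A_\mu$ and the constants tracked only through $k$ and $\Omega$ via \eqref{eq:form-bounds} and \eqref{eq:uniform-coercivity}.

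\emph{Step 1: Galerkin scheme.} Take the eigenfunctions $\{w_n\}$ of $A_\mu$, which exist because $A_\mu$ is self-adjoint with compact resolvent on $V^0$. For the memory component, take an orthonormal basis of $\mathcal M^{1}$ built as in \cite{CGDP2010}, using products of smooth compactly supported functions of $s$ with the $w_n$. Projecting \eqref{eq:CG-QC} gives a finite system of ODEs coupled with transport equations. The energy estimate of Step 2, performed at the Galerkin level, shows that approximate solutions exist globally with uniform bounds in
\[
L^\infty(0,T;\mathcal H^0)\cap L^2(0,T;V^1).
\]
Since $\phi(u)$ is bounded in $L^{q}$ for suitable $q$ in two dimensions, thanks to \textbf{(P2)} and $H^1_0\hookrightarrow L^p$ for every $p<\infty$, the Aubin--Lions lemma gives enough compactness to pass to the limit.

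\emph{Step 2: uniqueness and continuous dependence.} Take two solutions and subtract them. Test the difference $\bar u$ with $\bar u$, and handle $\phi(u_1)-\phi(u_2)$ through \textbf{(P2)}:
\[
\left|\langle\phi(u_1)-\phi(u_2),\bar u\rangle\right|\le C\left(1+\|u_1\|_{L^{4m}}^m+\|u_2\|_{L^{4m}}^m\right)\|\bar u\|_{L^2}\|\bar u\|_{L^4}.
\]
Ladyzhenskaya's inequality bounds the right-hand side by $\epsilon\|\bar u\|_{V^1}^2+C_\epsilon \Lambda(t)\|\bar u\|_{L^2}^2$, where $\Lambda(t)=1+\|u_1\|_{V^1}^2+\|u_2\|_{V^1}^2$ up to harmless powers. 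To keep the weight $\Lambda$ integrable in time, I would use the lower bound $\phi'\ge -\ell$ coming from \textbf{(P1)}: it gives $\langle\phi(u_1)-\phi(u_2),\bar u\rangle\ge-\ell\|\bar u\|^2$ directly, so Gronwall applies with a constant. Combining this with \eqref{eq:E1-diff-ineq-raw} at $r=0$ yields Lipschitz dependence in $\mathcal H^0$. Strong continuity then follows from the continuity of mild solutions of the history equation \eqref{eq:VOC-history}.

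\emph{Step 3: dissipativity.} This is the main obstacle. Apply \eqref{eq:E1-diff-ineq-raw} with $r=0$ and $g=f-\phi(u)$. From \textbf{(P1)}, write $\phi(u)u\ge -(\underline\lambda-2\delta)u^2-C$ for some $\delta>0$ tied to the gap in \textbf{(P1)}. Then
\[
-\langle\phi(u),u\rangle\le(\underline\lambda-2\delta)\|u\|_{V^0}^2+C\le\left(1-\tfrac{2\delta}{\underline\lambda}\right)\|u\|_{V^1}^2+C,
\]
using \eqref{eq:poincare-Hscale}. The key point is that this leaves a fraction $\frac{2\delta}{\underline\lambda}\|u\|_{V^1}^2$ of the dissipation. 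The cross term $2\nu\int\kappa\langle u,\eta\rangle_{V^1}$ is absorbed by choosing $\nu$ small relative to $\delta$, using \eqref{eq:kappa-vs-h}, in the spirit of \eqref{eq:F-damped}. The forcing term $\langle f,u\rangle$ is handled by Young's inequality. The outcome is
\[
\frac{\d}{\d t}\mathcal E_{0,\nu}+c\,\mathcal E_{0,\nu}+c\|u\|_{V^1}^2\le C.
\]
Gronwall's lemma together with \eqref{eq:E1-equiv-cor} gives the first part of \eqref{e:H0dis}. Integrating the inequality over $[t,t+1]$ gives the bound on $\int_t^{t+1}\|u\|_{V^1}^2$. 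The absorbing ball $\mathbb B_0$ is then any ball of radius larger than $\sqrt{2C}$.
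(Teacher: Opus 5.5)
Your Step 3 is the paper's proof: you use the energy identity for $\mathcal E_{0,\nu}$ with $g=f-\phi(u)$, the coercivity bound from \textbf{(P1)}, the Poincar\'e inequality \eqref{eq:poincare-Hscale}, and then Gronwall. You actually order the choices more carefully than the paper does. You start from the undamped identity \eqref{eq:E1-diff-ineq-raw}, where $\|u\|_{V^1}^2$ and $\langle g,u\rangle$ carry the same coefficient, and you choose $\nu$ only after $\delta$ is fixed. The paper instead goes through \eqref{eq:F-damped}, which keeps only $(1-\nu)\|u\|_{V^1}^2$ against $2\langle g,u\rangle$. That cannot absorb $2(1-\delta/\underline\lambda)\|u\|_{V^1}^2$ when the gap in \textbf{(P1)} is small, so your version is the one that closes. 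Your Step 2, which uses $\phi'\ge-\ell$ to get exponential-in-time Lipschitz dependence in $\mathcal H^0$, is also what the paper relies on later, in point (vi).

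The one claim that fails as written is in Step 1, in the existence part that the paper declares standard and omits. For $m\ge 3$, the bounds in $L^\infty(0,T;V^0)\cap L^2(0,T;V^1)$ do not place $\phi(u_n)$ in any $L^q$ with $q>1$. The embedding $H^1_0\hookrightarrow L^p$ only gives $\|\phi(u_n)\|_{L^q(\Omega)}\lesssim 1+\|u_n\|_{V^1}^{m+1}$, which lies in $L^{2/(m+1)}$ in time. Even the sharper Ladyzhenskaya/Gagliardo--Nirenberg interpolation only yields $u_n\in L^4(\Omega\times(0,T))$, hence $\phi(u_n)\in L^{4/(m+1)}$. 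So the compactness step needs a different justification. You have two standard options.

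\begin{enumerate}
\item Reuse the lower bound $\phi'\ge-\ell$ you already exploit in Step 2. It gives $|\phi(r)|\,|r|\le \phi(r)r+2\ell r^2$, so integrating the energy identity also bounds $\int_0^T\int_\Omega|\phi(u_n)|\,|u_n|$. This makes $\{\phi(u_n)\}$ bounded and equi-integrable in $L^1(\Omega\times(0,T))$. For Aubin--Lions, bound $\partial_t u_n$ in $L^1(0,T;V^{-s})$ with $s$ large enough that $V^s\hookrightarrow L^\infty(\Omega)$. Combined with the resulting a.e.\ convergence, Vitali's theorem lets you pass to the limit in the nonlinearity.
\item Construct solutions first for $\mathcal H^1$ data, where $\phi(u)\in L^\infty(0,T;L^q)$ for every $q<\infty$. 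Then extend $S(t)$ to $\mathcal H^0$ by density using your Step 2 Lipschitz estimate. This route also makes rigorous the testing of the difference equation with $\bar u$. It gives strong continuity of $t\mapsto S(t)z$ in $\mathcal H^0$ for free, as a uniform limit on compact time intervals, whereas your current argument only addresses continuity of the history component.
\end{enumerate}
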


\begin{proposition} \label{prop:main-H1-link}
The $\mathcal H^{0}$-semigroup $S(t)$ generated by Problem \eqref{eq:CG-QC} restricts to a strongly continuous semigroup on $\mathcal H^{1}$
admitting an absorbing ball $\mathbb B_{1}\subset\mathcal H^{1}$.
In addition, with reference to the absorbing ball $\mathbb B_{0}\subset\mathcal H^{0}$,
\begin{equation}\label{eq:link} \sup_{z\in \mathbb B_0}
\dist_{\mathcal H^0}\left(S(t)z,\mathbb{B}_{1}\right)\le C\e^{-c t}, \qquad t>0.
\end{equation}
\end{proposition}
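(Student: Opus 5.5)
The plan is to follow the Coleman--Gurtin scheme of \cite{CGDP2010}, splitting $S(t)$ into a decaying part and a $\mathcal H^1$-bounded part. We cannot use $H^2$ or Agmon-type bounds here; instead we exploit the two-dimensional Sobolev embeddings in $H^1_0(\Omega)$ and the dissipative estimate \eqref{e:H0dis}.

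\emph{Restriction to $\mathcal H^1$.} First we check that $S(t)$ leaves $\mathcal H^1$ invariant and has an absorbing ball there. Test \eqref{eq:CG-QC} in $V^1$, that is, apply \eqref{eq:F-damped} with $r=1$ and $g=f-\phi(u)$. The term $\langle f,u\rangle_{V^1}=\langle f,A_\mu u\rangle$ is bounded by $\tfrac14\|u\|_{V^2}^2+C\|f\|^2$. For the nonlinear term, \textbf{(P1)} lets us write $\phi=\phi_0+\phi_1$, with $\phi_0'\ge -\underline\lambda+\delta$ and $\phi_1$ Lipschitz. Then
\[
\langle\phi_0(u),A_\mu u\rangle=\int\phi_0'(u)\,\sigma_\mu\nabla u\cdot\nabla u\ge -(\underline\lambda-\delta)\|u\|_{V^1}^2,
\]
which involves only first derivatives, so it is legitimate for rough $\sigma_\mu$ within Galerkin. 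The Lipschitz part is handled by Young's inequality. Gronwall's lemma, combined with the $\mathcal H^0$ absorption \eqref{e:H0dis} and the integrated $V^1$ bound to control $\|u\|^2_{V^1}$ terms, gives an $\mathcal H^1$ absorbing ball. Strong continuity in $\mathcal H^1$ follows from the usual difference estimate, using $H^1_0\hookrightarrow L^p$ for all $p<\infty$ together with \textbf{(P2)}.

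\emph{Decomposition for \eqref{eq:link}.} For $z\in\mathbb B_0$, write $S(t)z=L(t)z+K(t)z$. The component $L(t)z=(v,\zeta)$ solves the linear homogeneous problem \eqref{eq:CG-QCext} with $g=0$ and datum $z$. By \eqref{eq:F-damped} with $r=0$, it satisfies $\|L(t)z\|_{\mathcal H^0}\le C\e^{-ct}$. The component $K(t)z=(w,\xi)$ has zero initial datum and solves
\[
\pt w+A_\mu w+\int_0^\infty h(s)A_\mu\xi(s)\,\d s=f-\phi(u).
\]
We apply \eqref{eq:F-damped} with $r=1$ to $(w,\xi)$, which requires bounding $\langle\phi(u),A_\mu w\rangle$. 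The natural choice is $\|\phi(u)\|_{L^2}\|w\|_{V^2}$. Here \textbf{(P2)} and the two-dimensional embedding give $\|\phi(u)\|_{L^2}\le C(1+\|u\|_{L^{2m+2}}^{m+1})$. However, $u$ is only in $L^2$ at the $\mathcal H^0$ level, so this is not controlled pointwise in time.

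\emph{Main obstacle.} The key difficulty is that the nonlinear forcing must be controlled using only $\mathcal H^0$ information on $u$. We would first try the standard trick of shifting the dissipation into $\phi$. Let $v$ solve the decaying part with the monotone piece, $\pt v+A_\mu v+\dots+\phi_0(u)-\phi_0(w)=0$, so that $K$ carries $f-\phi_1(u)+\phi_0(w)$-type terms. If this is not workable, we would use a time-splitting argument instead. By \eqref{e:H0dis}, $\int_t^{t+1}\|u\|_{V^1}^2\le C$, hence $\phi(u)\in L^2_{\mathrm{loc}}(V^0)$ with $\int_t^{t+1}\|\phi(u)\|_{L^2}^2\le C'$. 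This integral bound needs some control of $\|u\|_{V^1}^{2m+2}$, and we would obtain it by first proving an $\mathcal H^1$ smoothing estimate on $[\tau,\tau+1]$, via the uniform Gronwall lemma applied to the $V^1$ energy above. Once $\phi(u)$ is bounded in $L^2(t,t+1;L^2)$ uniformly, inequality \eqref{eq:F-damped} with $r=1$ and the forcing $2\langle f-\phi(u),A_\mu w\rangle\le \tfrac12\|w\|_{V^2}^2+C\|f-\phi(u)\|^2$ gives $\|K(t)z\|_{\mathcal H^1}\le R_1$ for all $t$. Choosing $\mathbb B_1$ of radius at least $R_1$ then yields \eqref{eq:link}.
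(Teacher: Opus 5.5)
Your step for the $\mathcal H^1$ absorbing ball is essentially the paper's: the $r=1$ energy inequality, $\phi'$ bounded below, and the translation-bounded $\int_t^{t+1}\|u\|_{V^1}^2$ from \eqref{e:H0dis}. The gap is in \eqref{eq:link}. The route you actually carry out takes $L$ linear homogeneous and $K$ forced by $f-\phi(u)$. It needs $\sup_{t\ge0}\int_t^{t+1}\|\phi(u)\|_{L^2}^2\,\d\tau<\infty$, and the tool you propose for this is not available. This system has no $\mathcal H^0\to\mathcal H^1$ smoothing. For $z\in\mathbb B_0$ you only have $\eta_0\in\mathcal M^1$, and the memory is transported: $\eta^t(s)=\eta_0(s-t)+\int_0^t u(t-y)\,\d y$ for $s>t$ keeps the roughness of $\eta_0$ forever. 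Hence $\mathcal E_{1,\nu}(u(t),\eta^t)=\infty$ in general, and the uniform Gronwall lemma has no finite integral to start from. You cannot decouple $u$ either: testing the $u$-equation with $A_\mu u$ produces $\langle\int_0^\infty h(s)A_\mu\eta(s)\,\d s,A_\mu u\rangle$, which requires $\eta\in\mathcal M^2$. This lack of smoothing is exactly why the statement asserts exponential attraction rather than absorption. Moreover, $K$ starts from zero at $t=0$, so the integral bound is also needed on $(0,1)$. There, for merely $L^2$ data and superlinear $\phi$, $\int_0^1\|\phi(u)\|_{L^2}^2\,\d\tau$ can diverge; this already happens for $\phi(u)=u^3$ in the heat equation.

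The ``standard trick'' you set aside is precisely the paper's argument, and it never estimates $\phi(u)$ in any norm. Choose $\ell\ge0$ so that $\phi_0(r)=\phi(r)+\ell r$ is monotone; this is possible since $\phi'$ is bounded below by \textbf{(P1)}. Let $(v,\xi)$ solve the memory system with forcing $\phi_0(w)-\phi_0(u)$ and datum $z$. Let $(w,\zeta)$ solve it with forcing $f+\ell u-\phi_0(w)$ and zero datum. Note the sign: $K$ carries $-\phi_0(w)$, not $+\phi_0(w)$, and this sign is what makes the argument work. Since $v=u-w$, monotonicity gives $\langle\phi_0(w)-\phi_0(u),v\rangle\le0$, so \eqref{eq:F-damped} at $r=0$ yields $\|L(t)z\|_{\mathcal H^0}\le C\e^{-ct}$. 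At $r=1$, the term $\langle\phi_0(w),A_\mu w\rangle=\int_\Omega\phi_0'(w)\,\sigma_\mu\nabla w\cdot\nabla w\ge0$ is dissipative and can be dropped. Meanwhile $\|f+\ell u\|_{L^2}$ is bounded uniformly in time by \eqref{e:H0dis}. Therefore \eqref{eq:F-damped} at $r=1$ gives $\|K(t)z\|_{\mathcal H^1}\le R_1$ for all $t\ge0$ and all $z\in\mathbb B_0$.
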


\begin{remark} \label{rem:tool-link} No Beltrami regularity beyond \eqref{e:ellipticitymu} is needed here, as the argument relies only on
uniform ellipticity, the memory dissipation mechanisms, and the dissipativity of $\phi$.
\end{remark}

\subsection{Time-averaged and $\mathcal V$-instantaneous smoothing}\label{subsec:main-smoothing}
The strategy we follow aims at partial instantaneous regularization of the solution with $\mathcal H^1$ datum.
 More precisely, the $u$-component of the solution to \eqref{eq:CG-QC} on $\mathcal H^1$ enters ${V^2}$ as soon as evolution begins. 
 
 \begin{theorem} \label{thm-inst}
For  each $R>0$ and $z\in   B_{\mathcal H^1}(R)$, there holds
\[
\|S(t)z\|_{\mathcal V}\le \mathcal I(R)\left(1+\frac{1}{\sqrt{t}}\right), \qquad t>0.
\]

\end{theorem}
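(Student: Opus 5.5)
The plan is to split the nonlinear problem \eqref{eq:CG-QC} with datum $z\in B_{\mathcal H^1}(R)$ into a uniform $L^\infty$ bound on $u$, obtained through maximal regularity, followed by a weighted-in-time energy estimate at the $V^1\times\mathcal M^2$ level.

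The first step is to establish an $L^\infty$ control that holds in a time-averaged sense. By Proposition \ref{prop:main-H1-link}, or rather the $\mathcal H^1$ dissipative estimate underlying it, the bound $\|S(t)z\|_{\mathcal H^1}\le \mathcal I(R)$ holds for all $t\ge0$. We also have $\int_t^{t+1}\|u\|_{V^2}^2\,\d\tau\le\mathcal I(R)$, from the energy identity of Remark \ref{cor:CG-energy} with $r=1$ after testing with $A_\mu u$. This $V^2$ bound is time-averaged only, and $V^2$ does not embed in $L^\infty$ without regularity of $\mu$. To obtain $L^\infty$, regard the $u$-equation as \eqref{eq:ACP-Xq} with
\[
\Phi=f-\phi(u)-\int_0^\infty h(s)A_\mu\eta(s)\,\d s .
\]
By \eqref{eq:memory-average-bound}, the memory term is bounded in $V^{-0}\cdot$-type norms via $\|\eta\|_{\mathcal M^2}$; it lies in $V^0\subset X_q$, and controlling it in $L^2_{\rm loc}(X_q)$ uses only $\eta\in\mathcal M^1$ through $\mathrm{div}(\sigma_\mu\nabla\eta)$. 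However, $\nabla\eta$ lies in $L^2$, not $L^q$, so I would instead write the memory term as $\mathrm{div}$ of $\int h\,\sigma_\mu\nabla\eta$ and use the $\mathcal H^1$ bound $\eta\in\mathcal M^2$, for which $A_\mu\eta\in L^2\hookrightarrow X_q$ in two dimensions. The terms $f$ and $\phi(u)$ are bounded in $L^2\hookrightarrow X_q$ because $H^1_0\hookrightarrow L^p$ for every $p$ and \textbf{(P2)} holds. Remark \ref{rem:29} with $r=2$, together with a time cutoff to remove the initial datum, then gives $\int_t^{t+1}\|u\|_{L^\infty}^2\,\d\tau\le\mathcal I(R)$ for $t\ge 1$. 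On $[0,1]$ the initial datum is $u_0\in H^1_0\subset X_q$.

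The second step is the $\mathcal V$ estimate. Test the $u$-equation with $A_\mu^2u$, or equivalently apply Remark \ref{cor:CG-energy} with $r=2$ and $g=f-\phi(u)$; this requires $g\in V^1$, which fails for $f\in L^2$ alone. To handle it, split $u=v+w$, where $w$ solves the stationary problem $A_\mu w+\ldots=f$, or is simply the time-independent solution $w=(2A_\mu)^{-1}f\in V^2$ of the linear memory system with forcing $f$; the remainder then carries only the forcing $-\phi(u)$. For that forcing,
\[
\|\phi(u)\|_{V^1}\lesssim \|\phi'(u)\|_{L^\infty}\|u\|_{V^1}\le \mathcal I\big(\|u\|_{L^\infty}\big)\,\mathcal I(R).
\]
This is the point where the $L^\infty$ regime is essential. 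Multiply the resulting inequality by $t$ to obtain the smoothing. The weighted energy $t\,\mathcal E_{1,\nu}(v,\eta_v)$ in $\mathcal H^1$-shifted norms satisfies
\[
\frac{\d}{\d t}\big(t\mathcal E\big)\le \mathcal E+t\,\|\phi(u)\|_{V^1}^2 ,
\]
where $\mathcal E$ is integrable on $[0,1]$ by the $\int\|u\|_{V^2}^2$ bound. The history component needs care: $\eta^0\in\mathcal M^2$ already holds for $\mathcal H^1$ data, so only $u$ needs smoothing and the history part of the $\mathcal V$ norm stays bounded. The weight $t$ yields $\|u(t)\|_{V^2}^2\lesssim \mathcal I(R)/t$, which is the $1/\sqrt t$ rate.

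The main obstacle is the polynomial dependence on $\|u\|_{L^\infty}$, which is controlled only in an $L^2$-in-time average and not pointwise. Since $\|\phi'(u)\|_{L^\infty}^2\le C(1+\|u\|_{L^\infty}^{2m})$, one needs $L^{2m}_t L^\infty_x$. I would obtain this by running Remark \ref{rem:29} with arbitrary $r\in(1,\infty)$: the forcing $\Phi$ is bounded in $L^\infty_t X_q$ uniformly, so every $L^r_tL^\infty_x$ average is controlled by $\mathcal I(R)$. A Gronwall argument on unit intervals then closes the estimate, and uniformity for large $t$ follows from the $\mathcal H^1$ absorbing bound.
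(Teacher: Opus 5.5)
Your first step, and the resulting bound $\sup_{t\ge0}\int_t^{t+1}\|\phi(u(\tau))\|_{V^1}^2\,\d\tau\le\mathcal I(R)$ from $L^{2m}_tL^\infty_x$ control plus the chain rule, are exactly what the paper uses (Theorem \ref{thm:main-Linfty-smoothing} with $r=2m$, then \eqref{eq:phi-H1-L2tb}). The gap is in Step 2, where this bound must become a pointwise bound on $\|u(t)\|_{V^2}$.

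Testing the $u$-equation with $A_\mu^2u$, or using Remark \ref{cor:CG-energy} with $r=2$, produces the memory coupling $\int_0^\infty h(s)\langle\eta(s),u\rangle_{V^3}\,\d s$. This term can be estimated, or cancelled against the $\eta$-equation, only through $\|\eta\|_{\mathcal M^3}$. With $\eta\in\mathcal M^2$ alone you would need $u\in V^4$, one derivative more than the dissipation $\|u\|_{V^3}^2$ provides.

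For $\mathcal H^1$ data the history lies only in $\mathcal M^2$, and the right-translation does not regularize it. By \eqref{eq:eta-representation}, $\eta^t(s)=\eta_0(s-t)+\int_0^t u(t-y)\,\d y$ for $s>t$, so in general $\|\eta^t\|_{\mathcal M^3}=\infty$ for every $t>0$. Subtracting $z_f$ does not help, because $\eta_f(s)=s\,u_f$ with $u_f$ merely in $V^2$. Consequences:
\begin{itemize}
\item $\mathcal E_{2,\nu}$ is in general infinite at all times, and $\int_0^1\mathcal E_{2,\nu}$ is not controlled by $\int_0^1\|u\|_{V^2}^2$.
\item The weight $t$ cannot repair this.
\item If instead $\mathcal E=\mathcal E_{1,\nu}$, as you write, then $t\,\mathcal E$ only controls $\|u\|_{V^1}$.
\end{itemize}
Your remark that only $u$ needs smoothing names the right target. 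However, you give no estimate of $\|u\|_{V^2}$ that keeps the history at the $\mathcal M^2$ level.

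That missing ingredient is Lemma \ref{lem:CGDP55-56-updated}(A), imported from the Euclidean theory. There the $u$-equation is paired in $V^1$ with $\pt u$, rather than with $u$ in $V^2$. This is combined with the $\eta$-equation and the tail functional $\mathcal U_2$. The result is a functional $\mathcal Y$ with the following properties:
\begin{itemize}
\item it is comparable to $\|(u,\eta)\|_{\mathcal V}^2$ up to additive constants and satisfies \eqref{eq:Y-differential};
\item the history never enters above the $\mathcal M^2$ level;
\item $f\in V^0$ is admissible without shifting by $z_f$;
\item $g=-\phi(u)$ only needs to be translation-bounded in $L^2(V^1)$.
\end{itemize}
Feeding this lemma your Step 1 bound and the translation-bounded $\mathcal V$-integral from \eqref{e:abs1} gives the $1+t^{-1/2}$ estimate. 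This is the paper's proof, via Proposition \ref{prop:V-smoothing}.

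Your $r=2$ energy does work for the part of the solution driven by $-\phi(u)$ from zero initial data, since zero history lies in $\mathcal M^3$. The linear part carrying the $\mathcal H^1$ datum, however, still needs an $\mathcal M^2$-compatible smoothing estimate of this kind.
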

We extract the partial instantaneous one derivative gain mentioned in the abstract, conditional to additional regularity on $\mu$, which is immediately obtained by combination of Theorem  \ref{thm-inst} with Proposition \ref{prop:W2p-A} and the definition of $\mathcal V$.
 \begin{corollary} \label{cor-inst}Suppose \eqref{e:regmu} holds with $p_0=2$. For  each $1<p<2$, $R>0$ and $z\in   B_{\mathcal H^1}(R)$, denoting  by $u(t)$ the first component of the solution $S(t)z$, we have
\[
\|u(t)\|_{W^{2,p}(\Omega)}\le \mathcal I(R,p)\left(1+\frac{1}{\sqrt{t}}\right), \qquad t>0.
\]
 \end{corollary}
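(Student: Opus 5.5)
The plan is to combine Theorem~\ref{thm-inst} with Proposition~\ref{prop:W2p-A} applied at each fixed time $t>0$. Fix $1<p<2$, $R>0$, $z\in B_{\mathcal H^1}(R)$, and write $S(t)z=(u(t),\eta^t)$. Theorem~\ref{thm-inst} gives
\[
\|u(t)\|_{V^2}\le \|S(t)z\|_{\mathcal V}\le \mathcal I(R)\left(1+\tfrac{1}{\sqrt t}\right),\qquad t>0.
\]
By definition, $V^2=D(A_\mu)$ with norm $\|u\|_{V^2}=\|A_\mu u\|_{L^2}$. Hence $g(t)\coloneqq A_\mu u(t)\in L^2(\Omega)$, and $u(t)\in H^1_0(\Omega)$ is the unique weak solution of $A_\mu u(t)=g(t)$ in $H^{-1}(\Omega)$. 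This holds because the operator $A_\mu$ on $V^0$ is the part in $L^2$ of the form operator, so the $L^2$ equation implies the weak formulation tested against all of $H^1_0$.

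Next I apply Proposition~\ref{prop:W2p-A} with $p_0=2$ and $p\in(1,2)=(1,p_0)$, which is exactly the admissible range. This gives
\[
\|u(t)\|_{W^{2,p}}\le C\left(\|g(t)\|_{L^p}+\|u(t)\|_{L^2}\right),
\]
with $C$ depending on $p,\Omega,k,\|\mu\|_{W^{1,2}}$. Since $\Omega$ is bounded, $\|g(t)\|_{L^p}\le |\Omega|^{1/p-1/2}\|g(t)\|_{L^2}=|\Omega|^{1/p-1/2}\|u(t)\|_{V^2}$. The lower-order term is controlled by the Poincaré inequality \eqref{eq:poincare-Hscale} applied twice: $\|u\|_{L^2}\le \underline\lambda^{-1}\|u\|_{V^2}$, with $\underline\lambda$ uniform in $\mu$. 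Alternatively, one can use the uniform $\mathcal H^0$ bound of Proposition~\ref{prop:main-H0}, which is bounded in $t$.

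Combining these bounds gives $\|u(t)\|_{W^{2,p}}\le C(p)\,\mathcal I(R)(1+t^{-1/2})$. Absorbing $C(p)$, together with its dependence on $\|\mu\|_{W^{1,2}}$ (treated as a fixed parameter), into a new increasing function $\mathcal I(R,p)$ yields the claim.

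There is no genuine obstacle here. The only point that needs care is checking that the self-adjoint realization's domain element is the weak solution required by Proposition~\ref{prop:W2p-A}, and this is immediate from the form definition of $D(A_\mu)$.
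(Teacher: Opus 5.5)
Your proposal is correct and is essentially the paper's argument. The paper obtains the corollary immediately by combining Theorem~\ref{thm-inst} with Proposition~\ref{prop:W2p-A} and the definition of $\mathcal V$, exactly as you do, with $\|A_\mu u\|_{L^p}\lesssim\|u\|_{V^2}$ on bounded $\Omega$ and the lower-order term controlled by $\|u\|_{L^2}\le\underline\lambda^{-1}\|u\|_{V^2}$.
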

\begin{remark}
\label{rem:key}
The keystone in the proof of Theorem \ref{thm-inst}  is the realization of an $L^\infty$-type control of the nonlinear term, which, in the Euclidean Laplacian setting of \cite{CGDP2010}, is achieved via Agmon's inequality, which is supported by their space ${V^2}=H^2(\Omega) \cap H^{1}_0(\Omega)$. On the other hand, in the present setting ${V^2}=D(A_\mu)$ is an abstract graph space for a rough
divergence-form operator, and it does not provide Agmon-type pointwise control. The $L^\infty $ control here is obtained instead via maximal $L^r, 1<r<\infty$ parabolic regularity, which is made possible by $W^{1,q}(\Omega)$  Beltrami resolvent theory, cf.\ Proposition \ref{prop:first-order-smoothing-MR}.
\end{remark} 
The ideas described in Remark  \ref{rem:key} are formalized in the result below, which is one of the main novelties of this article.   \begin{theorem}
 \label{thm:main-Linfty-smoothing}
With reference to Proposition \ref{prop:main-H1-link}, let $z_0\in \mathcal H^1$
 and $u=u(t)$ stand for the first component of $S(t) z_0$, $t\geq 0$. Then, for each $1<r<\infty$,\[
\sup_{t_0\geq 0} 
\|u\|_{L^r(t_0,t_0+1;L^\infty(\Omega))}\le \mathcal I\left(\|z_0\|_{\mathcal H^{1}},r\right).
\]
\end{theorem}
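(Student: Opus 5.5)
The plan is to treat the $u$-equation in \eqref{eq:CG-QC} as a linear parabolic problem on $X_q=W^{-1,q}(\Omega)$, with $2<q<q_\ast(k)$ fixed, and then apply the maximal regularity estimate \eqref{eq:MR-Linfty-est} on unit time windows. Write the equation as
\[
\pt u + A_{\mu,q}u = \Phi \coloneqq f - \phi(u) - \int_0^\infty h(s)A_\mu\eta(s)\,\d s .
\]
The argument has three steps. First, bound $\Phi$ in $L^r(t_0,t_0+2;X_q)$ uniformly in $t_0$. Second, shift time to $t_0$ and use \eqref{eq:MR-Linfty-est} with $T_1=1$, $T_2=2$, $T=2$, and initial datum $u(t_0)$. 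Third, for the window $(0,1)$, apply the same estimate starting at time $0$ with the datum $u(0)$. The uniform $\mathcal H^1$ bounds below give $\|u(t_0)\|_{X_q}\lesssim \|u(t_0)\|_{L^2}$ (since $L^2\hookrightarrow W^{-1,q}$ in two dimensions), so this is fine. To include $T_1=0$ in the estimate, one uses the full maximal regularity bound \eqref{eq:MR-est} on $(0,2)$ in place of the interior version; the interior version only served to invoke Proposition~\ref{prop:first-order-smoothing-MR} pointwise in time, which in fact works on all of $(0,T)$.

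The inputs for the first step come from Proposition~\ref{prop:main-H1-link}, together with the energy estimate underlying it (inequality \eqref{eq:F-damped} with $r=1$). These give
\[
\sup_{t\ge0}\|(u(t),\eta^t)\|_{\mathcal H^1}\le \mathcal I(\|z_0\|_{\mathcal H^1}).
\]
The forcing satisfies $f\in L^2\hookrightarrow X_q$, because $L^2(\Omega)\hookrightarrow W^{-1,q}$ for every $q<\infty$ in two dimensions, by duality with $W^{1,q'}_0\hookrightarrow L^2$.

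For the nonlinearity, \textbf{(P2)} gives $|\phi(u)|\lesssim |u|+|u|^{m+1}$. Since $H^1_0(\Omega)\hookrightarrow L^p$ for every $p<\infty$ in the plane, $\|\phi(u(t))\|_{L^2}\le \mathcal I(\|u(t)\|_{V^1})$, uniformly in time. Hence $\phi(u)\in L^\infty(0,\infty;L^2)\subset L^\infty(0,\infty;X_q)$, with no need for $L^\infty$ control of $u$.

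The memory term is the delicate part. Set $\Lambda(t)=\int_0^\infty h(s)\sigma_\mu\nabla\eta^t(s)\,\d s$, so that the memory term equals $-\mathrm{div}\,\Lambda(t)$ and its $X_q$ norm is bounded by $\|\Lambda(t)\|_{L^q}$. The difficulty is that $\eta^t\in\mathcal M^2$ only gives $\nabla\eta^t(s)\in L^2$, not $L^q$. I would therefore exploit the representation \eqref{eq:eta-representation}: for $s\le t$, $\eta^t(s)=\int_0^s u(t-y)\,\d y$, and for $s>t$ it equals $\eta_0(s-t)$ plus the same integral up to $t$.

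Given this representation, I would run a bootstrap on each window. Write $K(t)=\|\nabla u(t)\|_{L^q}$. By Minkowski's inequality, $\|\Lambda(t)\|_{L^q}$ is controlled by a convolution-type quantity $\int_0^t \kappa(y)K(t-y)\,\d y$, together with a contribution from $\eta_0$. Young's inequality in time then bounds its $L^r$ norm by $\|\kappa\|_{L^1}\|K\|_{L^r}$. The trouble is that $\|\kappa\|_{L^1}=1$ and the maximal regularity constant exceeds $1$, so this cannot be absorbed directly.

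To get around this, I would split the memory kernel. Recent history, $s\le\delta$, contributes with weight $\int_0^\delta\kappa$, which is small; this piece is absorbed into the left side, using the uniform-in-time equivalence of $\|u\|_{W^{1,q}}$ with the maximal regularity norm. Older history, $s>\delta$, is handled by the exponential decay of $\kappa$ and is estimated on earlier windows, inductively in $t_0$, via a Gronwall or discrete iteration in which the $\e^{-s/\Theta}$ decay makes the geometric series converge uniformly.

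The $\eta_0$ part is the main obstacle. Since $\eta_0$ lies only in $\mathcal M^2$, the quantity $\int_t^\infty h(s)\nabla\eta_0(s-t)\,\d s$ is a priori only in $L^2$. My first attempt would be to reduce $q$ toward $2$, and to place this term in the weaker space $X_2=H^{-1}$ plus an interpolation argument. If that fails, I would instead handle it by the linearity of the memory equation. Concretely, I would split off the solution of the linear problem driven by this tail, which is bounded in $V^1$-based norms and decays like $\e^{-t/\Theta}$, and then estimate its $L^\infty$ contribution separately through a $\mathcal V$-type energy estimate from \eqref{eq:F-damped}. Collecting all the pieces yields the claimed uniform $L^r(t_0,t_0+1;L^\infty)$ bound.
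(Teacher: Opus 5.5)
Your overall architecture matches the paper: maximal $L^r$ regularity on $X_q$ with $2<q<q_\ast(k)$, followed by $W^{1,q}_0\hookrightarrow L^\infty$. Your handling of $f$, of $\phi(u)$ and of the datum $u(t_0)$ through $L^2\hookrightarrow X_q$ is correct. So is your remark that Proposition~\ref{prop:first-order-smoothing-MR} works pointwise in time down to $T_1=0$.

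The gap is the memory term, and it comes from a misreading of the phase space. In $\mathcal H^1=V^1\times\mathcal M^2$ the history lives in $\mathcal M^2=L^2_h(\R^+;V^2)$ with $V^2=D(A_\mu)$. So $\eta^t\in\mathcal M^2$ gives $A_\mu\eta^t(s)\in L^2(\Omega)$, not merely $\nabla\eta^t(s)\in L^2$. By Cauchy--Schwarz in $s$,
\[
\Bigl\|\int_0^\infty h(s)A_\mu\eta^t(s)\,\d s\Bigr\|_{L^2(\Omega)}\le\kappa_0^{1/2}\,\|\eta^t\|_{\mathcal M^2}\le\mathcal I\left(\|z_0\|_{\mathcal H^1}\right),
\]
uniformly in $t\ge0$, by the very $\mathcal H^1$ bound you quote. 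Hence the memory term lies in $L^\infty(t_0,t_0+1;X_q)$ exactly like $f$ and $\phi(u)$. There is no need to put $\Lambda(t)$ in $L^q$, and nothing to bootstrap. Even the $\eta_0$ piece is harmless: $\|\int_t^\infty h(s)A_\mu\eta_0(s-t)\,\d s\|_{L^2}\le\kappa(t)^{1/2}\|\eta_0\|_{\mathcal M^2}$, because $h$ is nonincreasing. This one-line bound is the step the paper uses.

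The route you propose instead does not close as sketched. Let $x_n$ be the $L^r$ norm of $K$ on the $n$-th window. After removing the recent history $y\le\delta$, the older history still carries mass $\int_\delta^\infty\kappa\approx1$. The window iteration therefore reads $x_n\le C+C_{\mathrm{MR}}M(k)\sum_{j\ge1}a_jx_{n-j}$ with $C_{\mathrm{MR}}M(k)\sum_j a_j>1$. Moreover, with unit windows and $\delta<1$, part of the ``older'' history lies in the current window, so it cannot be treated on earlier windows.

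Renewal inequalities of this type permit exponential growth in $n$. The exponential decay of $\kappa$ shapes the weights $a_j$, but it does not reduce their total mass, and that mass is what decides whether the iteration stays bounded. At best you would obtain bounds growing like $\e^{\gamma t_0}$, not the $\sup_{t_0\ge0}$ bound the theorem asserts.

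Your fallback for $\eta_0$ via $X_2=H^{-1}$ cannot give $L^\infty$ control either, since $H^1_0(\Omega)\not\hookrightarrow L^\infty(\Omega)$ in the plane. The last alternative, splitting off a linear tail problem and using a $\mathcal V$-type energy estimate, is too vague to assess.
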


 \subsection{Regular   attractors}\label{subsec:main-expattr} Finally, relying upon Theorems \ref{thm-inst} and \ref{thm:main-Linfty-smoothing}, we study the asymptotic dynamics of Problem \eqref{eq:CG-QC}.
The results that follow are borne out of the Beltrami second-order regularity theory for operators with rough coefficients, and for this reason, we will assume  that $\mu \in W^{1,2}(\Omega)$, that is \eqref{e:regmu} holds with $p_0=2$, so that
\[
{V^2} = \mathcal D(A_\mu) \hookrightarrow W^{2,p}(\Omega), \qquad 1<p<2
\]
by Proposition \ref{prop:W2p-A}.
 For this reason, from now till the end of this section,  $\|\mu\|_{W^{1,2}(\Omega)}$ is considered an admissible structural parameter.
Associated with the time-independent forcing $f$ is the vector, cf.\ \cite[(4.1)]{CGDP2010}
\begin{equation}\label{eq:main-zf}
u_f\coloneqq \frac12 A_\mu^{-1}f,\qquad \eta_f(s)\coloneqq s\,u_f,\qquad z_f\coloneqq (u_f,\eta_f)
\end{equation}
which is the steady state of \eqref{eq:CG-QC} when $\phi=0$.

\begin{theorem} \label{thm:main-exp-attractor-V}
Assume  \eqref{e:regmu} holds with $p_0=2$. Then, the semigroup $S(t)$ of Proposition \ref{prop:main-H0} admits a strongly continuous compression to the phase space $\mathcal V$ possessing a regular exponential attractor. Namely, there exists a set
$\mathfrak E\subset\mathcal V$  with the following properties:
\begin{itemize}
\item[\textup{(i)}] $\mathfrak E$ is compact and has finite fractal dimension in $\mathcal V$;
\item[\textup{(ii)}] $\mathfrak E -z_f$ is bounded  and has finite fractal dimension in the spaces  $\mathcal H^3$ and  $ W^{2,p}(\Omega)\times  L^{2}_{h}(\R^{+};W^{2,p}(\Omega))$ for all $1<p<2$;

\item[\textup{(iii)}]  $\mathfrak E$ is positively invariant, i.e.\ $S(t)\mathfrak E\subset \mathfrak E$ for all $t\ge0$;
\item[\textup{(iv)}] $\mathfrak E$ exponentially attracts bounded subsets of $\mathcal V$, namely\[
\dist_{\mathcal V}\left(S(t)B_{\mathcal V}(R),\mathfrak E\right)\le \mathcal I(R)\e^{-c t},
\qquad \forall t\ge0;
\]
\item[\textup{(v)}] $\mathfrak E$ exponentially attracts bounded subsets of $\mathcal H^1$, in the $\mathcal H^1$-metric and in the stronger $\mathcal V$-metric once  detached from initial times, namely
\[
\dist_{\mathcal H^1}\left(S(t)B_{\mathcal H^1}(R),\mathfrak E\right) + \sqrt{t}\dist_{\mathcal V}\left(S(t)B_{\mathcal H^1}(R),\mathfrak E\right)\le\mathcal I(R)\e^{-c t},
\quad \forall t\ge0;
\]
\item[\textup{(vi)}] $\mathfrak E$ exponentially attracts bounded subsets of $\mathcal H^0$, in the $\mathcal H^0$-metric, namely
\[
\dist_{\mathcal H^0}\left(S(t)B_{\mathcal H^0}(R),\mathfrak E\right) \leq  \mathcal I(R)\e^{-c t},
\qquad \forall t\ge0.
\]
\end{itemize}
\end{theorem}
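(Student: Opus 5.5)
The plan follows the exponential attractor scheme of \cite{CGDP2010}, transplanted to the phase space $\mathcal V$, using Theorem~\ref{thm-inst} and Theorem~\ref{thm:main-Linfty-smoothing} to handle the nonlinearity and Proposition~\ref{prop:W2p-A} to convert $V^2$-type bounds into $W^{2,p}$ bounds.

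\textbf{Step 1: an absorbing set in $\mathcal V$.} Since $\mathcal V\hookrightarrow\mathcal H^1$, Proposition~\ref{prop:main-H1-link} and Theorem~\ref{thm-inst} applied at time $t=1$ show that $\mathcal B_{\mathcal V}\coloneqq \overline{\bigcup_{t\ge 1}S(t)\mathbb B_1}$ is bounded in $\mathcal V$. It is also absorbing for bounded subsets of $\mathcal H^1$. Strong continuity of $S(t)$ on $\mathcal V$ follows from the $\mathcal V$-energy inequality \eqref{eq:F-damped} with $r=1$ and $g=f-\phi(u)$. The term $\|\phi(u)\|_{V^1}\lesssim \|\phi'(u)\|_{L^\infty}\|\nabla u\|_{L^2}$ is controlled by Theorem~\ref{thm:main-Linfty-smoothing} together with (P2) after a time integration.

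\textbf{Step 2: splitting of the dynamics.} For $z\in\mathcal B_{\mathcal V}$, write $S(t)z=z_f+L(t)z+K(t)z$.
\begin{itemize}
\item $L$ solves the linear homogeneous problem \eqref{eq:CG-QCext} with $g=0$ and initial datum $z-z_f$. By \eqref{eq:F-damped} it decays exponentially in $\mathcal V$.
\item $K$ solves \eqref{eq:CG-QCext} with forcing $g=-\phi(u)+\beta u$ and zero initial datum, where $\beta$ is chosen as in (P1).
\end{itemize}
The goal is a bound for $K$ in $\mathcal H^3\cap\{\text{compact in }\mathcal V\}$. Testing with $A_\mu^2$ requires $\phi(u)\in L^2_{\loc}(V^2)$. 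Write $A_\mu\phi(u)=\phi'(u)A_\mu u-\phi''(u)\,\sigma_\mu\nabla u\cdot\nabla u$, formally in divergence form. The second term needs $\nabla u\in L^4$, which comes from the $W^{2,p}$ embedding (Proposition~\ref{prop:W2p-A} with $p$ close to $2$, so that $W^{2,p}\hookrightarrow W^{1,4}$) combined with the $L^\infty$ bound.

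Compactness of the memory component is obtained in the standard way: $K$-histories lie in $\mathcal M^3$, have uniformly bounded $\ps$-derivatives, and satisfy a tail estimate, so the compactness lemma of \cite{CGDP2010} applies.

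\textbf{Step 3: Lipschitz continuity and squeezing.} For differences $\bar z=z_1-z_2$, with both data in $\mathcal B_{\mathcal V}$, the plan is to prove
\[
\|S(t)z_1-S(t)z_2\|_{\mathcal V}\le \e^{Ct}\|z_1-z_2\|_{\mathcal V}.
\]
I would then decompose the difference as a decaying part plus a part bounded in $\mathcal H^3$ by $C_t\|z_1-z_2\|_{\mathcal V}$. This is the main obstacle. The difference term $\phi(u_1)-\phi(u_2)$ must be estimated in $V^1$, and in $V^2$ for the compact part. That requires bounding expressions such as $\phi''(\cdot)\,\bar u\,\nabla u_i$ and $\phi'(\cdot)\nabla\bar u$ in $L^2$, using only $\bar u\in V^2$, with no Agmon inequality available. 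I would first try the embedding $V^2\hookrightarrow W^{2,p}\hookrightarrow L^\infty\cap W^{1,s}$ for all $s<\infty$, available since $p_0=2$, and Hölder's inequality. The constants then depend on $\|\mu\|_{W^{1,2}}$, which is admissible here.

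Together with Hölder continuity in time, the abstract theorem of \cite{EFNT,MZ} then yields an exponential attractor on $\mathcal B_{\mathcal V}$. Items (i) and (iii) follow directly. Item (ii) follows because $\mathfrak E-z_f$ lies in the $\mathcal H^3$-bounded image of $K$, combined with Proposition~\ref{prop:W2p-A}. Item (iv) follows by absorption. Item (v) follows from transitivity of exponential attraction, using Theorem~\ref{thm-inst} for the $\sqrt t$ weight. Item (vi) follows from \eqref{eq:link} combined with (v) and the $\mathcal H^0$ Lipschitz continuity of $S(t)$.
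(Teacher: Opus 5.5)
Your ingredients are the paper's. These are the $\mathcal V$-absorbing set from instantaneous smoothing, the splitting into $z_f$, a decaying linear part and a zero-data part forced by $-\phi(u)$, and the $\mathcal V$-Lipschitz estimate. Likewise the chain rule $A_\mu\phi(u)=\phi'(u)A_\mu u-\phi''(u)(\sigma_\mu\nabla u)\cdot\nabla u$, with $V^2\hookrightarrow W^{2,p}\hookrightarrow L^\infty\cap W^{1,4}$, puts $\phi(u)$ and $\phi(u_2)-\phi(u_1)$ in $V^2$.

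The gap is in the final assembly. You apply the abstract theorem directly on the merely $\mathcal V$-bounded set $\mathcal B_{\mathcal V}$ and take ``H\"older continuity in time'' for granted, but there it fails. For $s>t$ the history is $\eta^t(s)=\eta_0(s-t)+\int_0^t u(t-y)\,\d y$, a right-translate of the initial history. Moreover, $S(1)\mathbb B_1\subset\mathcal B_{\mathcal V}$ contains, on $s>1$, translates of arbitrary histories from a ball of $\mathcal M^2$. Right translation is strongly but not uniformly continuous on bounded sets of $\mathcal M^2$: take $\eta_0$ oscillating fast in $s$, and note $h>0$ since $\kappa>0$. The linear part only decays and does not vanish, so no modulus $\|S(t)z-S(\tau)z\|_{\mathcal V}\le C|t-\tau|^\theta$ uniform over $\mathcal B_{\mathcal V}$ exists. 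Without it, $\bigcup_{t\in[t_\star,2t_\star]}S(t)\mathfrak E_d$ need not have finite fractal dimension.

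The same defect breaks your argument for (ii). An exponential attractor produced on $\mathcal B_{\mathcal V}$ is only known to lie in $\mathcal B_{\mathcal V}$. Its points still carry the non-decayed component $L(t)(z-z_f)$, which is only $\mathcal V$-regular. So nothing places $\mathfrak E-z_f$ in the $\mathcal H^3$-bounded range of $K$.

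The missing step is Proposition~\ref{thm:regular-exp-attract-sec7}. Your Step 2 must be upgraded to show that $\mathbb S_{\mathcal Z}=z_f+\mathbb B_{\mathcal Z}(R_\star)$ is positively invariant for $t\ge t_\star$ and exponentially attracts the $\mathcal V$-absorbing ball. Here $\mathcal Z=V^3\times\mathcal W$, and $\mathcal W$ controls $\|T\eta\|_{\mathcal M^2}$ and the small/large-$s$ tails through $\Xi$. One then builds the discrete attractor only for $S(t_\star)$ on $\mathbb S_{\mathcal Z}$ and concludes by transitivity of exponential attraction \cite{FGMZ}. Since $\mathfrak E\subset\mathbb S_{\mathcal Z}$, (ii) follows at once.

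On $\mathbb S_{\mathcal Z}$, Lipschitz continuity in time comes from differentiating the equation in $t$. The pair $(\pt u,\pt\eta)$ solves the auxiliary problem with $f=0$ and $g=-\phi'(u)\pt u$. The bound $\|\phi'(u)v\|_{V^1}\le C\|v\|_{V^2}$ then lets Lemma~\ref{lem:CGDP55-56-updated}\textup{(A)} control $(\pt u,\pt\eta)$ in $\mathcal V$.

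Relatedly, in your Step 3 the regular part of the difference must be bounded in $\mathcal Z$, not merely in $\mathcal H^3$, because $\mathcal M^4\hookrightarrow\mathcal M^2$ is not compact.

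Two minor slips:
\begin{itemize}
\item With $L$ linear homogeneous, the extra forcing $\beta u$ in $K$ breaks the identity $S(t)z=z_f+L(t)z+K(t)z$, and it is not needed at this level.
\item \eqref{eq:F-damped} with $r=1$ is an $\mathcal H^1$ estimate. The $\mathcal V$ control comes instead from the functional behind Lemma~\ref{lem:CGDP55-56-updated}\textup{(A)}.
\end{itemize}
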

The next corollary concerns the existence of regular global attractors for the semigroup on both phase spaces $\mathcal H^0, \mathcal H^1.$
\begin{corollary}\label{cor:main-global-attractors}
Assume \eqref{e:regmu} holds with $p_0=2$. With reference to \textup{Theorem \ref{thm:main-exp-attractor-V}}, there exists a compact subset  $\mathfrak A\subset \mathfrak E$ with the properties that follow.
\begin{itemize}
\item[\textup{(i)}] $\mathfrak A$ is fully invariant under $S(t)$, that is  $S(t)\mathfrak A=\mathfrak A$, and the restriction of $S(t)$ on $\mathfrak A$ is a group of operators;
\item[\textup{(ii)}] $\mathfrak A$ attracts bounded sets in $\mathcal H^0$, $\mathcal H^1$ in  the following sense: for each
for every $R>0$,
\[
\lim_{t\to \infty}
\dist_{\mathcal H^0}\left(S(t)B_{\mathcal H^0}(R),\mathfrak A\right)= 0, \qquad 
\lim_{t\to \infty}
\dist_{\mathcal V}\left(S(t)B_{\mathcal H^1}(R),\mathfrak A\right)= 0.
\]
\end{itemize}
\end{corollary}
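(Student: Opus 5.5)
The global attractor $\mathfrak A$ will be obtained as the $\omega$-limit set of the exponential attractor, using the standard fact that a semigroup with a compact attracting set possesses a global attractor contained in it. Concretely, I would set
\[
\mathfrak A\coloneqq \omega(\mathfrak E)=\bigcap_{\tau\ge0}\overline{\bigcup_{t\ge\tau}S(t)\mathfrak E}^{\,\mathcal V}.
\]
Since $\mathfrak E$ is compact in $\mathcal V$ and positively invariant (Theorem \ref{thm:main-exp-attractor-V}(i),(iii)), the sets $\overline{\bigcup_{t\ge\tau}S(t)\mathfrak E}$ form a decreasing family of nonempty compact subsets of $\mathfrak E$. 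Hence $\mathfrak A$ is a nonempty compact subset of $\mathfrak E$.

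\emph{Invariance.} I would prove $S(t)\mathfrak A=\mathfrak A$ by the usual sequential argument, using continuity of $S(t)$ on $\mathcal V$ (the strongly continuous compression). For "$\subset$", if $S(t_n)e_n\to a$ with $t_n\to\infty$, then $S(t)S(t_n)e_n\to S(t)a$, so $S(t)a\in\mathfrak A$. For "$\supset$", given $a=\lim S(t_n)e_n$, the points $S(t_n-t)e_n$ lie in the compact set $\mathfrak E$. Extracting a convergent subsequence with limit $b\in\mathfrak A$ gives $S(t)b=a$.

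\emph{Group property on $\mathfrak A$.} This needs injectivity of $S(t)$ on $\mathfrak A$, i.e.\ backward uniqueness. I expect this to be the main obstacle. The approach I would try first is to exploit the structure of the Dafermos variable. If $S(t)\xi_1=S(t)\xi_2$ with $\xi_i\in\mathfrak A$, then by \eqref{eq:eta-representation} the $\eta$-components coincide for $s>t$, so $\eta_{0,1}=\eta_{0,2}$. On $[0,t]$ they coincide as well, giving $\int_0^s u_1(t-y)\,\d y=\int_0^s u_2(t-y)\,\d y$ for all $s\le t$. Differentiating in $s$ yields $u_1=u_2$ on $[0,t]$, hence $u_1(0)=u_2(0)$ by continuity. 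Thus $\xi_1=\xi_2$, which makes the $\eta$-variable encode the past trajectory exactly and gives injectivity directly. Once injectivity is established, $S(t)\colon\mathfrak A\to\mathfrak A$ is a continuous bijection of a compact set. It is therefore a homeomorphism, and the restriction extends to a group.

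\emph{Attraction.} Since $\mathfrak A$ is the global attractor of the semigroup restricted to $\mathfrak E$, it attracts $\mathfrak E$ in $\mathcal V$. By compactness, $\mathfrak A$ attracts every compact subset of $\mathcal V$, in particular the $\mathcal V$-neighbourhoods of $\mathfrak E$ that are uniformly controlled. For $B=B_{\mathcal H^1}(R)$ I would apply the transitivity of exponential attraction (\cite{EFNT}-type lemma) as follows:
\begin{itemize}
\item Theorem \ref{thm:main-exp-attractor-V}(v) gives $\dist_{\mathcal V}(S(t)B,\mathfrak E)\le \mathcal I(R)\e^{-ct}/\sqrt t$.
\item For any $\epsilon>0$, choose $T$ such that $\dist_{\mathcal V}(S(T)\mathfrak E,\mathfrak A)<\epsilon$. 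This is possible because $\mathfrak E$ is compact and attracted by $\mathfrak A$, uniformly.
\item Uniform continuity of $S(T)$ on a bounded $\mathcal V$-neighbourhood of $\mathfrak E$ then gives $\dist_{\mathcal V}(S(T+t)B,\mathfrak A)\le \epsilon+\omega_T(\mathcal I(R)\e^{-ct}/\sqrt t)$, where $\omega_T$ is the continuity modulus coming from the Lipschitz continuous-dependence estimates in $\mathcal V$ on bounded sets.
\end{itemize}
Letting $t\to\infty$ and then $\epsilon\to0$ gives the $\mathcal V$ limit. The $\mathcal H^0$ statement follows identically from (vi), using continuity of $S(T)$ in $\mathcal H^0$ on the absorbing ball and the fact that $\mathfrak E$ is compact in $\mathcal H^0$.
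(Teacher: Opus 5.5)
Your argument is correct, and it is organized differently from the paper's. The paper works in two steps:
\begin{itemize}
\item it takes $\mathcal A_j$ to be the $\omega$-limit set of $\mathfrak E$ in $\mathcal H^j$, $j=0,1$, asserts that each is the $\mathcal H^j$ global attractor, and identifies $\mathcal A_0=\mathcal A_1$ through attraction plus invariance;
\item it obtains the group property by citing backward uniqueness for systems with memory from \cite{AMN}.
\end{itemize}
You instead take a single $\omega$-limit in $\mathcal V$. Since $\mathfrak E$ is $\mathcal V$-compact, this automatically coincides with the $\omega$-limits taken in the weaker topologies, so no identification step is needed.

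You then prove attraction by an explicit transitivity argument. It combines Theorem~\ref{thm:main-exp-attractor-V}(v)--(vi), the fact that $\omega(\mathfrak E)$ attracts the compact set $\mathfrak E$, and Lipschitz dependence on bounded sets: Proposition~\ref{prop:contdep-V} in $\mathcal V$, and in $\mathcal H^0$ the monotonicity-based estimate the paper also invokes for (vi).

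For the group property, you replace the citation by a direct proof that $S(t)$ is injective on the whole phase space. The Dafermos variable $\eta^t$ records both $u|_{[0,t]}$ and $\eta_0$.

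Your route is self-contained and explicitly proves the $\mathcal V$-attraction of $\mathcal H^1$-balls stated in (ii). The paper's proof only yields that implicitly, through $\mathcal H^1$-attraction plus the cited theory. The paper's route is shorter but defers the substance to the literature.

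A few details should be tightened.
\begin{itemize}
\item In the injectivity step, equality $\eta_1^t=\eta_2^t$ in $\mathcal M^r$ yields pointwise equality for a.e.\ $s$ only where $h>0$. This holds here because $\kappa>0$ and \textbf{(K2)} give $h\ge\kappa/\Theta>0$ a.e. You should say so, since for kernels with compact support the argument would not recover $u(0)$ for large $t$.
\item The order of deduction is reversed. You must first use the branch $s\le t$, evaluated at $s=t$, to cancel $\int_0^t u_i(t-y)\,\d y$. Only then does the branch $s>t$ give $\eta_{0,1}=\eta_{0,2}$.
\item The sentence claiming that $\mathfrak A$ attracts every compact subset of $\mathcal V$ ``by compactness'', including ``$\mathcal V$-neighbourhoods of $\mathfrak E$'', is not justified at that point, and neighbourhoods are not compact. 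Drop it; your bulleted argument does not use it.
\end{itemize}
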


\begin{remark} \label{rem:beltrami-tools-main}
During the proof of Theorem \ref{thm:main-exp-attractor-V}, 
nonlinear differences must be bounded at the ${V^2}$ level. This is precisely where the second-order Beltrami theory of Proposition~\ref{prop:W2p-A} is used.
 \end{remark}

\section{Dissipativity in $\mathcal H^{j}$, $j=0,1$}\label{sec:dissipativity-Hj}

This section summarizes the proofs of Propositions \ref{prop:main-H0} and \ref{prop:main-H1-link}.     We restrict ourselves to  indicating the relevant differences with the previous treatment of the  constant coefficient case, whose main references are \cites{CGDP2010,AMN}.

\subsection{Proof of Proposition \ref{prop:main-H0}} First, once the dissipative estimate \eqref{e:H0dis} is   obtained, well-posedness follows by a standard Galerkin scheme which we omit.
To prove \eqref{e:H0dis}, preliminarily observe that assumptions \textbf{(P1)}-\textbf{(P2)} entail the coercivity
\begin{equation}\label{eq:phi-coercivity}
\phi(r)\,r \ge -(\underline\lambda(k,\Omega)-\delta)\,r^2 - C_\delta,
\qquad \forall r\in\R,
\end{equation}
where $\delta>0$ will be chosen later and $\underline\lambda(k,\Omega)$ is defined in \eqref{eq:uniform-coercivity}.
Write the system in the form \eqref{eq:CG-QCext} with $r=0$ and forcing
$g(t)=f-\phi(u(t))$. Applying Remark~\ref{cor:CG-energy} with $r=0$ yields, for $\nu\in(0,\nu_0)$,
\begin{equation}\label{eq:H0-damped-start}
\frac{\d}{\d t}\mathcal E_{0,\nu}(u(t),\eta^t)+\nu^2\mathcal E_{0,\nu}(u(t),\eta^t) + (1-\nu)\|u(t)\|^2_{{V^1}}
\le 2|\langle f-\phi(u(t)),u(t)\rangle_{V^0}|.
\end{equation}
Coercivity \eqref{eq:uniform-coercivity},  standard H\"older and Poincar\'e inequality, and a suitable choice of $\delta$ depending only on $\nu_0$ lead us to the differential inequality 
\[
\frac{\d}{\d t}\mathcal  E_{0,\nu}(u(t),\eta^t)+\nu^2\mathcal E_{0,\nu}(u(t),\eta^t)+\nu  \|u(t)\|^2_{{V^1}} \le C(1+\|f\|_{V^0}^2),
\]
which leads to \eqref{e:H0dis} by Gronwall's lemma.

\subsection{Proof of dissipativity in Proposition \ref{prop:main-H1-link}}
First, we show that Problem \eqref{eq:CG-QC} also generates a  
strongly continuous semigroup
\[
S(t):\mathcal H^{1}\to\mathcal H^{1},\qquad S(t)(u_0,\eta_0)=(u(t),\eta^{t})
\] 
satisfying the absorbing estimate and dissipation integral
\begin{equation} \label{e:abs1}
	\sup_{\xi=(u,\eta) \in B_{R}({\mathcal H^1})} \left[ \left\| S(t) \xi\right\|_{{\mathcal H^1}}^2 + \int_{t}^{t+1} \|u_\xi (\tau)\|_{  {V^2}}^2 \, \d \tau \right] \leq \mathcal I (R)\e^{-ct} +C, \qquad t\geq 0.
\end{equation} 
where $S(t)\xi=(u_\xi(t),\eta_\xi^t)$.
In particular, the ball  $\mathbb B_1\coloneqq B_{2C}({\mathcal H^1})$, with reference to the constant $C$ appearing in  \eqref{e:abs1}, is an absorbing set for the semigroup $S(\cdot)$ on $\mathcal H^1$.

Let $\xi\in B_R(\mathcal H^1)$ and let $(u,\eta)=S(\cdot)\xi$. By Proposition \ref{prop:main-H0},
the trajectory enters a bounded absorbing set of $\mathcal H^0$. In particular, there exists $t_0=\mathcal I(R)$ such that
\begin{equation}\label{eq:H0-bdd-used-new}
\sup_{t\ge t_0}\|u(t)\|_{V^0}
+
\sup_{t\ge t_0}\int_t^{t+1}\|u(\tau)\|_{{V^1}}^2\,\d\tau \le C. 
\end{equation}
Now apply Remark~\ref{cor:CG-energy} with $r=1$ to \eqref{eq:CG-QCext} and forcing
$g(t)=f-\phi(u(t))$. This yields, for $\nu\in(0,\nu_0)$,
\begin{equation}\label{eq:H1-damped-start}
\frac{\d}{\d t}\mathcal E_{1,\nu}(u(t),\eta^t)+\nu^2\mathcal E_{1,\nu}(u(t),\eta^t) + (1-\nu) \|u(t)\|_{{V^2}}^2
\le 2 \langle f-\phi(u(t)),u(t)\rangle_{{V^1}},
\end{equation}
having implemented the shorthand notation $\mathcal E(t)\coloneqq \mathcal E_{1,\nu}(u(t),\eta^t).$
First, 
\[ 2
\langle f ,u\rangle_{{V^1}}
\le 2\|f\|_{V^0}\|u\|_{{V^2}} \leq \nu \|u\|_{{V^2}}^2 + C \|f\|_{V^0}^2.
\]
Further, integrating by parts and using the lower bound (P1) followed by ellipticity, 
\begin{equation}\label{eq:phi-H1-split}
-\langle \phi(u),u\rangle_{{V^1}} = 
\int_\Omega \phi'(u)\,\sigma_\mu\nabla u\cdot\nabla u\,\d x \leq C\|u\|_{{V^1}}^2.
\end{equation} Combining \eqref{eq:H1-damped-start} with the last two displays leads to the differential inequality
\begin{equation}\label{eq:H1-damped-middle}
\frac{\d}{\d t}\mathcal E(t)+\nu^2\mathcal E(t)+ (1-2\nu) \|u(t)\|_{{V^2}}^2
\le  C   (1+ \|u(t)\|_{{V^1}}^2)
\end{equation} 
A first Gronwall integration of the crude version of \eqref{eq:H1-damped-middle}, namely $\mathcal E'\leq C \mathcal E +C,$ yields $ \mathcal E(t_0) \leq  \mathcal I(t_0,R)$. Thus ultimately $\mathcal E(t_0) \leq  \mathcal I(R)$.
By variation of constants in \eqref{eq:H1-damped-middle}  on $(t_0,t)$,
\begin{equation}\label{eq:VOC-E}
\begin{split}
\mathcal E(t)&\le \e^{-\nu^2(t-t_0)}\mathcal E(t_0)+C\left(1-\e^{-\nu^2(t-t_0)}\right)+\int_{t_0}^t e^{-\nu^2(t-s)}\|u(s)\|_{{V^1}}^2\,\d s 
\\ &\leq \mathcal I(R) \e^{-\nu^2(t-t_0)} + C
\end{split}
\end{equation}
where the integral term has been estimated via the translation-boundedness in \eqref{eq:H0-bdd-used-new} and exponential decay.
Recalling the norm equivalence \eqref{eq:E1-equiv-cor}, \eqref{eq:VOC-E} leads to the claimed absorption property.

\subsection{Proof of (\ref{eq:link}) of Proposition \ref{prop:main-H1-link}} 
The crux here is to prove that there exist  $R_1>0$  depending only on the structural parameters such that for every $z\in \mathbb B_0$ and every $t\ge 0$,
\begin{equation}\label{eq:exp-attraction}
\dist_{\mathcal H^0} (S(t)z,\; B_{\mathcal H^1}(R_1)) \le C\e^{-c t}.
\end{equation}
In particular, the ball $B_{\mathcal H^1}(R_1)$ exponentially attracts $\mathbb B_0$ in the $\mathcal H^0$--metric. Redefining $\mathbb  B_1$ to be larger than $B_{\mathcal H^1}(R_1)$ if necessary, we obtain the existence of a $\mathcal H^1 $-absorbing ball that also attracts bounded sets of $\mathcal H^0$ exponentially, as claimed.

Begin the proof proper by
fixing $z=(u_0,\eta_0)\in\mathbb B_0$, and writing $S(t)z=(u(t),\eta^t)$.
Relying on (P1), choose a structural $\ell\ge 0$ so that the shifted nonlinearity $\phi_0(r)=\phi(r)+\ell r$ is monotone, that is
$\phi_0'(r)\ge 0$ for all $r\in\mathbb R$.
Decompose
\[
u=v+w,\qquad \eta=\xi+\zeta,
\]
where $(v,\xi)$ and $(w,\zeta)$ solve the coupled systems
\begin{equation}\label{eq:L-system}
\left\{
\begin{aligned}
&\partial_t v + A_\mu v + \int_0^\infty h(s)A_\mu\xi(s)\, \d s   =\phi_0(w)-\phi_0(u) \\
&\partial_t \xi = T\xi + v,\\
&(v(0),\xi^0)=(u_0,\eta_0),
\end{aligned}
\right.
\end{equation}
and
\begin{equation}\label{eq:K-system}
\left\{
\begin{aligned}
&\partial_t w + A_\mu w + \int_0^\infty h(s)A_\mu\zeta(s) \,\d s 
= f - (\phi_0(w)-\ell u),\\
&\partial_t \zeta = T\zeta + w,\\
&(w(0),\zeta^0)=(0,0).
\end{aligned}
\right.
\end{equation}
Adding \eqref{eq:L-system} and \eqref{eq:K-system} yields the original system, hence
\begin{equation}\label{eq:decomp}
S(t)z = L(t)z + K(t)z,\qquad L(t)z\coloneqq (v(t),\xi^t),\quad K(t)z\coloneqq(w(t),\zeta^t).
\end{equation}
Let us handle \eqref{eq:L-system} first. Apply Remark~\ref{cor:CG-energy} with $r=0$, viewing
$
g_L(t)\coloneqq -\left(\phi_0(u(t))-\phi_0(w(t))\right)
$
as the forcing in \eqref{eq:CG-QCext}. For $\nu\in(0,\nu_0)$ we obtain the damped inequality
\begin{equation}\label{eq:L-damped-start}
\frac{\d}{\d t}\mathcal E_{0,\nu}\left(v(t),\xi^t\right)+\nu^2\mathcal E_{0,\nu}\left(v(t),\xi^t\right)
\le 2\langle g_L(t),v(t)\rangle_{V^0} .
\end{equation}
Since $\phi_0$ is monotone,
  $\langle g_L(t),v(t)\rangle_{V^0}\le 0$ and the right-hand side of
\eqref{eq:L-damped-start} is negative definite. Therefore,
\[
\mathcal E_{0,\nu}\left(v(t),\xi^t\right)\le \e^{-\nu^2 t}\mathcal E_{0,\nu}(u_0,\eta_0).
\]
Using the norm equivalence \eqref{eq:E1-equiv-cor} at level $r=0$ we conclude that, for some $C\ge 1$,
\begin{equation}\label{eq:L-exp}
\|L(t)z\|_{\mathcal H^0}
=\|(v(t),\xi^t)\|_{\mathcal H^0}
\le C\e^{-\frac{\nu^2}{2}t}\|z\|_{\mathcal H^0}\leq  C\e^{-\frac{\nu^2}{2}t},
\end{equation}
as $z\in\mathbb B_0$
Thus there exist $C\ge 1,c>0$  such that
\begin{equation}\label{eq:L-exp-B0}
\|L(t)z\|_{\mathcal H^0}\le C\e^{-c t}\qquad \forall t\ge 0,\ \forall z\in\mathbb B_0.
\end{equation}
Turn to \eqref{eq:K-system} and
apply Remark~\ref{cor:CG-energy} with $r=1$, so that\begin{equation}\label{eq:K-energy-start}
\frac{\d}{
\d t}\mathcal E_{1,\nu}\left(w(t),\zeta^t\right)+\nu^2\mathcal E_{1,\nu}\left(w(t),\zeta^t\right) + (1-\nu)  \|w\|_{{V^2}}^2 
\le 2 \langle f+\ell u(t)-\phi_0(w(t)),\,w(t)\rangle_{{V^1}} .
\end{equation}
We estimate
\[
2|\langle f+\ell u(t),w\rangle_{{V^1}}|
\le \nu\|w\|_{{V^2}}^2 + C \|f\|_{V^0}^2+ C\|u(t)\|_{V^0}^2 \leq \nu\|w\|_{{V^2}}^2+C,
\]
where the last inequality is due to the absorbing estimate in $\mathcal H^0.$
On the other hand, since $\phi_0'(r)\ge 0$ for all $r$, integration by parts yields
\[
-\langle \phi_0(w),w\rangle_{{V^1}}
=\int_\Omega \phi_0'(w)\,\sigma_\mu\nabla w\cdot\nabla w\,\d x \ge\ 0,
\]
so the contribution of $-\phi_0(w)$ in \eqref{eq:K-energy-start} is dissipative and may be dropped.
Proceeding exactly as in Remark~\ref{cor:CG-energy}, in particular absorbing the $\nu\|w\|_{{V^2}}^2$ term into the left-hand side,
we obtain
\begin{equation}\label{eq:K-damped}
\frac{\d}{\d t}\mathcal E_{1,\nu}\left(w(t),\zeta^t\right)+\nu^2\mathcal E_{1,\nu}\left(w(t),\zeta^t\right) + (1-2\nu)    \|w\|_{{V^2}}^2 
\le C. 
\end{equation}
Using $\mathcal E_{1,\nu}(w(0),\zeta^0)=0$,  integrating \eqref{eq:K-damped} and taking into account the norm equivalence \eqref{eq:E1-equiv-cor} yields
the existence of $R_1>0$ such that
\begin{equation}\label{eq:K-H1-bdd}
\|K(t)z\|_{\mathcal H^1}\le R_1
\qquad \forall\,t\ge 0,\ \forall\,z\in\mathbb B_0,
\end{equation}
and $R_1$ depends only on the structural parameters.
Invoking \eqref{eq:L-exp-B0}, we have shown 
\[
\dist_{\mathcal H^0}\!\left(S(t)z,\;B_{\mathcal H^1}(R_1)\right)\le C\e^{-c t},
\]
which is \eqref{eq:exp-attraction}.

 \section{Proof of Theorem \ref{thm:main-Linfty-smoothing}}
 \label{s5}
 
 Let
$r\in (1,\infty)$, and  for each $z_0=(u_0,\eta_0)\in {\mathcal H^1}$, denote  the corresponding
trajectory by $S(t)z_0\coloneqq (u(t),\eta^t)$. Recall that we have to prove
\begin{equation}\label{eq:MR-Linfty-on-B1}
\|u\|_{L^r(t_0,t_0+1;L^\infty(\Omega))}\le \mathcal I(\|z_0\|_{{V^1}},r), \qquad t_0\ge 0.
\end{equation}
For simplicity, from now on, write $R=\|z_0\|_{\mathcal H^1}$.
For $t\ge0$, rewrite the $u$-equation from \eqref{eq:CG-QC} as
\begin{equation}\label{eq:u-MR-form}
\pt u(t)+A_\mu u(t)=\Phi(t) \coloneqq f-\phi(u(t))+\int_0^\infty h(s)A_\mu\eta^t(s)\,\d s \qquad \text{in }\mathcal D'(\Omega),
\end{equation}
Fix $t_0\ge0$ and consider \eqref{eq:u-MR-form} on the time interval $(t_0,t_0+1)$. Fix $q=q(k)\coloneqq \frac{4(k+1)}{3k+1} $, so that $\frac1q$ the midpoint of $(\frac{1}{q_*(k)}, \frac{1}{2})$, which obviously complies with assumptions of Proposition \ref{prop:MR-W-1q}. Recall $X_q=W^{-1,q}(\Omega)=(W_0^{1,q'}(\Omega))'$ and the notation  $A_{\mu,q}$ for the
realization \eqref{eq:def-Amuq} on $X_q$.
As $q>2$,  the two-dimensional Sobolev embedding
$W_0^{1,q'}(\Omega)\hookrightarrow L^2(\Omega)=V^0$ reads 
\begin{equation}\label{eq:L2-into-Xq}
\|g\|_{X_q}\le C\,\|g\|_{V^0}\qquad \forall g\in V^0
\end{equation}
The dissipative estimate  \eqref{e:H0dis} and  \eqref{eq:L2-into-Xq} combine to tell us in particular that
\begin{equation}\label{eq:uL2-unif} \|u(t_0)\|_{X_q} \leq C
\sup_{t\ge0}\|u(t)\|_{V^0}\le \mathcal I(R).
\end{equation}
We estimate the three contributions to $\Phi(t)$ in $V^0$ as well, then use \eqref{eq:L2-into-Xq} again.
Obviously  $\|f\|_{X_q}\le C\|f\|_{V^0}$.
Further, via  Cauchy-Schwarz and memory kernel assumptions,
\[ 
\begin{split}
\left\|\int_0^\infty h(s) A_\mu\eta^t(s)\,\d s\right\|_{V^0}
&\le \int_0^\infty h(s)\|A_\mu\eta^t(s)\|_{V^0}\,\d s
\le \kappa_0^{{\frac12}}\left(\int_0^\infty h(s)\|\eta^t(s)\|_{{V^2}}^2\,\d s\right)^{\frac12} \\ &
= \kappa_0^{\frac12}\,\|\eta^t\|_{\mathcal M^2}.
\end{split}
\]
Further, estimate \eqref{e:abs1}  reads
\[\sup_{t\ge0}\|\eta^t\|_{\mathcal M^2}\le \mathcal I(R).\] Hence, the memory term is bounded in
$L^\infty(t_0,t_0+1;V^0)$, thus in $L^r(t_0,t_0+1;X_q)$.
Moving to the nonlinear term, by \textbf{(P2)} and $\phi(0)=0$ there exists $C>0$ such that
$|\phi(s)|\le C(1+|s|^{m+1})$ for all $s\in\mathbb R$. Therefore  
\[
\|\phi(u(t))\|_{V^0}
\le C\left(|\Omega|^{1/2}+\|u(t)\|_{L^{2(m+1)}}^{m+1}\right) \leq C\left(1+ \|u(t) \|_{V^1}^{m+1}\right) \leq \mathcal I(R), \qquad t \geq t_0
\]
relying upon Sobolev embeddings $H_0^1(\Omega) \sim {V^1}\hookrightarrow L^{p}(\Omega)$ for each   $1<p<\infty$ and again on \eqref{e:abs1}.
Collecting, we obtain
\begin{equation}\label{eq:Phi-Xq-bound}
\|\Phi\|_{L^\infty(t_0,t_0+1;X_q)}\le  \mathcal I(R).
\end{equation}
The abstract equation \eqref{eq:u-MR-form} is precisely \eqref{eq:ACP-Xq} on $(t_0,t_0+1)$, namely
\[
\pt u + A_{\mu,q}u=\Phi \quad \text{in }X_q.
\]
Via Proposition~\ref{prop:first-order-smoothing-MR} and Remark \ref{rem:29}, maximal $L^r$ regularity for each $1<r<\infty$ yields in particular 
\[
\|u\|_{L^r(t_0,t_0+1;L^\infty(\Omega))}
\le \mathcal I(r)\left(\|u(t_0)\|_{X_q}+\|\Phi\|_{L^r(t_0,t_0+1;X_q)}\right)
\le \mathcal I(R,r),
\]
which is \eqref{eq:MR-Linfty-on-B1}.

\section{Instantaneous smoothing and continuous dependence}\label{sec:aux-Vp}
In this section, we leverage Theorem \ref{thm:main-Linfty-smoothing} to show that the semigroup  $S(t) $ generated by \eqref{eq:CG-QC} enjoys instantaneous regularization properties into the space $\mathcal V$. In the same stroke, we   obtain that $S(t) $ is  also a $\mathcal V$-semigroup, enjoying strong-type local continuous dependence provided $\mathcal V$ supports elliptic regularity. These dynamical results will be exploited in the construction of regular exponential attractors, which may thus be performed  at the $\mathcal V$-level.  The first rigorous statement is as follows.

\begin{proposition} \label{prop:V-smoothing}
Let $R>0$ and let $z\in B_{\mathcal H^1}(R)$. Then, for every $t>0$, one has $S(t)z\in\mathcal V$ and
\begin{equation}\label{eq:V-smoothing-est}
\|S(t)z\|_{\mathcal V}\le \mathcal I(R)\left(1+t^{-1/2}\right).
\end{equation}
If in addition $z\in \mathcal V$, then
\begin{equation}\label{eq:V-smoothing-est-V0}
\|S(t)z\|_{\mathcal V}\le \mathcal I(R)\,\|z\|_{\mathcal V}\e^{-t}+\mathcal I(R), \qquad t\ge 0.
\end{equation}
As a consequence, $S(t)$ admits a $({\mathcal H^1},\mathcal V)$-absorbing ball  $\mathbb B_{\mathcal V}\coloneqq B_{\mathcal V}(R_{\mathcal V})$; namely, for each $R>0$ there exists $t_{\mathcal V}=\mathcal I(R)>0$ such that
\[
S(t)B_{\mathcal H^1}(R)\subset \mathbb B_{\mathcal V},\qquad \forall t\ge t_{\mathcal V}.
\]

\end{proposition}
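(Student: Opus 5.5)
The plan is to run a $\mathcal V$-level energy estimate, i.e.\ Remark~\ref{cor:CG-energy} with $r=2$, for the system \eqref{eq:CG-QC} with forcing $g=f-\phi(u)$. The only non-routine term is the nonlinear one, which we control with the time-averaged $L^\infty$ bound of Theorem~\ref{thm:main-Linfty-smoothing}. Formally, with $\mathcal E(t)\coloneqq \mathcal E_{2,\nu}(u(t),\eta^t)$, inequality \eqref{eq:F-damped} gives
\[
\frac{\d}{\d t}\mathcal E+(1-\nu)\|u\|_{V^3}^2+\nu^2\mathcal E\le 2\langle f-\phi(u),u\rangle_{V^2}.
\]
Here $\langle g,u\rangle_{V^2}=\langle A_\mu^{1/2}g, A_\mu^{3/2}u\rangle_{V^0}$. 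Hence $2|\langle f,u\rangle_{V^2}|\le \nu\|u\|_{V^3}^2+C\|f\|_{V^1}^2$, which is not available since $f\in L^2$ only. To avoid this, we would first subtract the stationary profile: set $\tilde u=u-2u_f$, or more simply write $\langle f,u\rangle_{V^2}=\langle f, A_\mu u\rangle_{V^0}$-type pairings at one level lower. Concretely, the cleanest choice is to differentiate the equation in time. Let $(\bar u,\bar\eta)=(\pt u,\pt\eta)$. This pair solves \eqref{eq:CG-QCext} at level $r=0$ with forcing $-\phi'(u)\pt u$, and $f$ drops out. The equation then expresses $A_\mu u+\int h A_\mu\eta=f-\phi(u)-\pt u$, so $\|u\|_{V^2}$ is recovered from $\|\pt u\|_{V^0}$, $\|\phi(u)\|_{V^0}\le\mathcal I(R)$ and $\|\eta\|_{\mathcal M^2}$. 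The memory component $\|\eta\|_{\mathcal M^2}$ is already bounded by \eqref{e:abs1}, because $\mathcal M^2$ is the memory space of $\mathcal H^1$. Therefore $\|S(t)z\|_{\mathcal V}\lesssim \mathcal I(R)+\|\pt u(t)\|_{V^0}$, and it suffices to bound $Y(t)\coloneqq \mathcal E_{0,\nu}(\pt u(t),\pt\eta^t)$.

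For $Y$, Remark~\ref{cor:CG-energy} at $r=0$ gives
\[
Y'+\nu^2Y+(1-\nu)\|\pt u\|_{V^1}^2\le 2\int_\Omega|\phi'(u)||\pt u|^2\le 2\|\phi'(u)\|_{L^\infty}\|\pt u\|_{V^0}^2\le C\bigl(1+\|u\|_{L^\infty}^m\bigr)Y .
\]
By Theorem~\ref{thm:main-Linfty-smoothing} with $r=m$, the coefficient $\gamma(t)\coloneqq C(1+\|u(t)\|_{L^\infty}^m)$ satisfies $\int_{t}^{t+1}\gamma\le\mathcal I(R)$ uniformly in $t$. This is precisely the replacement for the Agmon step of \cite{CGDP2010}.

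For the singular estimate \eqref{eq:V-smoothing-est} we use a weighted Gronwall argument. One needs $\int_0^1 Y\le \mathcal I(R)$. From \eqref{e:abs1} we have $\int_0^1\|u\|_{V^2}^2\le\mathcal I(R)$, hence $\int_0^1\|\pt u\|_{V^0}^2\le\mathcal I(R)$ via the equation. The term $\|\pt\eta\|_{\mathcal M^0}=\|T\eta+u\|_{\mathcal M^0}$ is delicate, because $T\eta$ need not lie in $\mathcal M^0$ for $\eta\in\mathcal M^2$. I would handle this by splitting $\eta^t$ via \eqref{eq:eta-representation}: the part generated by $u$ on $(0,t)$ is regular, while the initial datum is transported into $\e^{tT}\eta_0$. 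Since the $\eta$-component of $\mathcal V$ only requires $\mathcal M^2$, which is already controlled, it is enough to work with a truncated functional containing only $\|\pt u\|_{V^0}^2$ plus the memory energy of the regular part. Multiplying by $t$ then gives $(tY)'\le Y+\gamma\, tY$, so $tY(t)\le \e^{\int_0^t\gamma}\int_0^tY\le\mathcal I(R)$ for $t\le1$. For $t\ge1$ the uniform-in-$t$ local integrability of $\gamma$ together with the damping $\nu^2Y$ yield a uniform bound, via a uniform Gronwall lemma on unit intervals. This produces $\|\pt u(t)\|_{V^0}\le\mathcal I(R)(1+t^{-1/2})$.

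For \eqref{eq:V-smoothing-est-V0}, with $z\in\mathcal V$ we have $\pt u(0)=f-\phi(u_0)-A_\mu u_0-\int hA_\mu\eta_0\in V^0$ with norm $\le\mathcal I(R)(1+\|z\|_{\mathcal V})$. Here $\phi(u_0)\in L^2$ because $u_0\in H^1_0$. Thus Gronwall applies directly from $t=0$. To obtain the decaying form, split on $[0,1]$ (bounded growth) and on $t\ge1$, where the first part already gives the uniform bound; this yields $\mathcal I(R)\|z\|_{\mathcal V}\e^{-t}+\mathcal I(R)$. The absorbing ball then follows by combining \eqref{eq:V-smoothing-est} with the $\mathcal H^1$-absorbing ball $\mathbb B_1$: after entering $\mathbb B_1$ at time $\mathcal I(R)$, apply \eqref{eq:V-smoothing-est} with $R$ replaced by the radius of $\mathbb B_1$ at $t=1$.

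The main obstacle is the memory component of $Y$ in the time-differentiated system, i.e.\ making sense of $\pt\eta=T\eta+u$ in $\mathcal M^0$ when only $\eta_0\in\mathcal M^2$ is known. I expect the fix to be the decomposition above, or equivalently running the energy directly on $(A_\mu^{1/2}u,\eta)$ at level $r=1$ with the time derivative only on $u$. The rest is a routine Gronwall argument driven by $L^m_tL^\infty_x$ control.
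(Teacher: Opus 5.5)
Your reduction $\|S(t)z\|_{\mathcal V}\le\mathcal I(R)+\|\pt u(t)\|_{V^0}$ through the equation is correct. Working with the time-differentiated system is a genuinely different route from the paper's, which does not pass to the differentiated system. The paper instead feeds $g=-\phi(u)$ into the $V^1$-level Lyapunov functional of Lemma~\ref{lem:CGDP55-56-updated}(A). It then only has to verify $\sup_t\int_t^{t+1}\|\phi(u)\|_{V^1}^2\,\d\tau\le\mathcal I(R)$, which it does via the chain rule and Theorem~\ref{thm:main-Linfty-smoothing} with $r=2m$.

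The gap is at the step you flag as the main obstacle. Splitting $\eta^t=\e^{tT}\eta_0+\eta_2^t$ does make the memory part of your functional finite: $\pt\eta_2^t(s)=u(t)-u(t-s)\mathbf 1_{\{s<t\}}$ is bounded in $\mathcal M^1$ by $\sup_\tau\|u(\tau)\|_{V^1}$. (The relevant space is $\mathcal M^1$, not $\mathcal M^0$, since $\mathcal H^0=V^0\times\mathcal M^1$.) But the transported datum does not drop out of the dynamics. It enters the $u$-equation through
\[ G(t)\coloneqq\int_0^\infty h(s)A_\mu\bigl(\e^{tT}\eta_0\bigr)(s)\,\d s=\int_0^\infty h(s+t)A_\mu\eta_0(s)\,\d s, \]
so the equation for $\pt u$ carries the extra forcing $-\pt G(t)=-\int_0^\infty h'(s+t)A_\mu\eta_0(s)\,\d s$. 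Your inequality $Y'\le\gamma Y$ is therefore missing the term $2|\langle\pt G,\pt u\rangle_{V^0}|$. The fact that $\mathcal M^2$ is already controlled bounds the $\eta$-component of the $\mathcal V$-norm, but does nothing for the $\pt u$ estimate. Since $\eta_0$ has no regularity in $s$, and \textbf{(K1)}--\textbf{(K2)} give no pointwise control of $h'$ by $h$, this term genuinely needs an argument.

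It can be supplied. Using Fubini, $\int_0^\infty|h'(s+t)|\,\d t=h(s)$, and Cauchy--Schwarz,
\[ \int_0^\infty\|\pt G(t)\|_{V^0}\,\d t\le\int_0^\infty h(s)\|\eta_0(s)\|_{V^2}\,\d s\le\kappa_0^{1/2}\|\eta_0\|_{\mathcal M^2}. \]
Hence $Y'\le\bigl(2\ell+\|\pt G\|_{V^0}\bigr)Y+\|\pt G\|_{V^0}$ with $\|\pt G\|_{V^0}\in L^1(\R^+)$. This suffices for your weighted Gronwall on $(0,1]$, for the uniform Gronwall for $t\ge1$, and for the unweighted estimate \eqref{eq:V-smoothing-est-V0} with $\mathcal V$ data.

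Here $\ell\ge0$ is such that $\phi'\ge-\ell$, which follows from \textbf{(P1)}. In your route $-\langle\phi'(u)\pt u,\pt u\rangle_{V^0}\le\ell\|\pt u\|_{V^0}^2$, so Theorem~\ref{thm:main-Linfty-smoothing} is not needed at all. (As stated, your choice $r=m$ would also be inadmissible when $m=1$.) Moreover, $\phi(u)\in V^0$ follows from $V^1\hookrightarrow L^{2(m+1)}$ in two dimensions.

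So, once completed, your argument uses only the one-sided bound on $\phi'$ and never places $\phi(u)$ in $V^1$. The paper's route, by contrast, needs the maximal-regularity $L^\infty$ bound precisely to obtain that $V^1$ control of the nonlinearity.
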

Up to the construction of the $\mathcal V$--absorbing set, the analysis relies only on the abstract Hilbert scale
$V^r=D(A_\mu^{r/2})$ and on maximal regularity on $W^{-1,q}$, hence the ellipticity bound \eqref{e:ellipticitymu} is sufficient. 
The continuous dependence estimate for the semigroup in the space  $\mathcal V$, however, requires a pointwise-in-time ${V^1}$  control on  the nonlinear
difference so as to comply with Lemma~\ref{lem:CGDP55-56-updated} \textup{(B)} below. 
This entails estimating products involving
$\|u_2(t)-u_1(t)\|_{\infty}$ at fixed times. 
Since ${V^2}=D(A_\mu)$ is only an abstract graph space for rough coefficients, such $L^\infty$ bounds are not available
from the Hilbert scale alone. For this reason, we are bound to invoke the Beltrami elliptic regularity
Proposition~\ref{prop:W2p-A}, which under $\mu\in W^{1,2}$  yields the
crucial embeddings ${V^2}\hookrightarrow W^{2,p}(\Omega)\hookrightarrow L^\infty(\Omega)$ in dimension $2$, for $1<p<2.$

\begin{proposition}[Continuous dependence in $\mathcal V$]\label{prop:contdep-V}
Assume, in addition to \eqref{e:ellipticitymu}, that \eqref{e:regmu} holds with $p_0= 2$.
Let $R>0$ and let $z_1,z_2\in B_{\mathcal V}(R)$. Denote
\[
S(t)z_i=(u_i(t),\eta_i^t),\qquad w(t)\coloneqq u_2(t)-u_1(t),\qquad \bar\eta^t\coloneqq \eta_2^t-\eta_1^t,
\qquad W(t)\coloneqq (w(t),\bar\eta^t).
\]
Then  
\begin{equation}\label{eq:contdep-V}
\|W(t)\|_{\mathcal V}\le \mathcal I(R)\|W(0)\|_{\mathcal V}\,\e^{\mathcal I(R) t}\qquad \forall t\ge 0.
\end{equation}
\end{proposition}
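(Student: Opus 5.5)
The plan is to subtract the two equations and run the energy estimate of Remark~\ref{cor:CG-energy} at level $r=1$, but tested so as to produce $\mathcal V$-norms. The difference $W=(w,\bar\eta)$ solves \eqref{eq:CG-QCext} with $g(t)=-(\phi(u_2(t))-\phi(u_1(t)))$ and $W(0)=z_2-z_1$. Since $\mathcal V=V^2\times\mathcal M^2$ and $\mathcal H^1=V^1\times\mathcal M^2$ share the memory component, I will combine the $r=1$ energy inequality \eqref{eq:E1-diff-ineq-raw} with an estimate for $\|w\|_{V^2}$. That estimate comes from testing the $w$-equation with $A_\mu^2 w$, i.e.\ running Remark~\ref{cor:CG-energy} formally at $r=2$. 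The memory part at level $\mathcal M^3$ would then also appear. To avoid needing $\mathcal M^3$ control, I will work with the functional $\|w\|_{V^2}^2+\|\bar\eta\|_{\mathcal M^2}^2$ instead. The cross term is $\langle \int h A_\mu\bar\eta,\, A_\mu w\rangle$ together with the $\bar\eta$-equation tested in $\mathcal M^2$. Equivalently, differentiating the equation in time, the quantity $\partial_t w$ plays the role of $A_\mu w$ modulo the memory and nonlinear terms.

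My concrete route is to follow the scheme that the paper labels Lemma~\ref{lem:CGDP55-56-updated}~(B), as in \cite{CGDP2010}. First I obtain the $\mathcal H^1$-level Lipschitz bound
\[
\|W(t)\|_{\mathcal H^1}^2+\int_0^t\|w\|_{V^2}^2\le \mathcal I(R)\e^{\mathcal I(R)t}\|W(0)\|_{\mathcal H^1}^2,
\]
using $|\langle \phi(u_2)-\phi(u_1),w\rangle_{V^1}|\le \|\phi(u_2)-\phi(u_1)\|_{V^0}\|w\|_{V^2}$. The first factor is bounded by $\sup|\phi'|\,\|w\|_{V^0}$ on the range of $u_i$. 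By \eqref{eq:V-smoothing-est-V0}, $\sup_t\|u_i(t)\|_{V^2}\le\mathcal I(R)$, and Proposition~\ref{prop:W2p-A} with $V^2\hookrightarrow W^{2,p}\hookrightarrow L^\infty$ gives $\|u_i\|_\infty\le\mathcal I(R)$. Then I go up one level. The $V^2$-level energy identity has forcing term $\langle g,w\rangle_{V^2}=\langle A_\mu^{1/2}g,A_\mu^{3/2}w\rangle$, which I rewrite as $\langle \nabla g,\sigma_\mu \nabla (A_\mu w)\rangle$-type expressions. The dissipation $\|w\|_{V^3}^2$ absorbs this provided I control $\|g\|_{V^1}\approx\|\nabla(\phi(u_2)-\phi(u_1))\|_{L^2}$ pointwise in time.

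The key step, and the main obstacle, is this bound on $\|\phi(u_2)-\phi(u_1)\|_{V^1}$ by $\mathcal I(R)\|w\|_{V^2}$. I write
\[
\nabla(\phi(u_2)-\phi(u_1))=\phi'(u_2)\nabla w+(\phi'(u_2)-\phi'(u_1))\nabla u_1.
\]
The first term is at most $\|\phi'(u_2)\|_\infty\|w\|_{V^1}$. For the second I use $\|w\|_\infty\|\phi''\|_{L^\infty(\text{range})}\|\nabla u_1\|_{L^2}$. The factor $\|w\|_\infty$ is bounded by $C\|w\|_{W^{2,p}}\le \mathcal I(R)\|w\|_{V^2}$, which is exactly where Proposition~\ref{prop:W2p-A} with $p_0=2$ and some fixed $p\in(1,2)$ enters, since without it there is no fixed-time $L^\infty$ control. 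This yields
\[
\tfrac{\d}{\d t}\|W\|_{\mathcal V}^2\le \mathcal I(R)\|W\|_{\mathcal V}^2,
\]
with the memory coupling handled by Lemma~\ref{lem:translation-dissipation} and the cancellation of the cross terms between the $u$- and $\eta$-equations at level $V^2$ and $\mathcal M^2$. Gronwall then gives \eqref{eq:contdep-V}. All manipulations are justified on Galerkin approximations, per the standing warning.
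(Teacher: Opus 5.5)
Your key step is the paper's. The proof there consists of verifying hypothesis \eqref{eq:CGDP56-ass-updated} of Lemma~\ref{lem:CGDP55-56-updated}\textup{(B)} for the difference system with $g=\phi(u_1)-\phi(u_2)$. It uses the chain rule, the mean value theorem, the uniform bound $\sup_t\|u_i(t)\|_{V^2}\le\mathcal I(R)$ from \eqref{eq:V-smoothing-est-V0}, and the fixed-time embedding $V^2\hookrightarrow W^{2,p}(\Omega)\hookrightarrow L^\infty(\Omega)$ from Proposition~\ref{prop:W2p-A}. Your splitting of $\nabla(\phi(u_2)-\phi(u_1))$ is the paper's with $u_1,u_2$ swapped. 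Once $\|g(t)\|_{V^1}\le\mathcal I(R)\|w(t)\|_{V^2}\le\mathcal I(R)\|W(t)\|_{\mathcal V}$ holds, Lemma~\ref{lem:CGDP55-56-updated}\textup{(B)} with $\upsilon=\mathcal I(R)$ gives \eqref{eq:contdep-V} directly. Your preliminary $\mathcal H^1$-level Lipschitz bound is correct but not needed.

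You should drop the mechanism you describe for closing the estimate yourself, because the claimed cancellation of cross terms between $V^2$ and $\mathcal M^2$ does not occur. Testing the $w$-equation in $V^2$ produces $\int_0^\infty h(s)\langle\bar\eta^t(s),w(t)\rangle_{V^3}\,\d s$. Testing the $\bar\eta$-equation in $\mathcal M^2$ produces $\int_0^\infty h(s)\langle w(t),\bar\eta^t(s)\rangle_{V^2}\,\d s$. These cancel only for the pairing $V^2\times\mathcal M^3=\mathcal H^2$, i.e.\ for the energy of Remark~\ref{cor:CG-energy} at $r=2$, whose dissipation $\|w\|_{V^3}^2$ you use to absorb $\langle g,w\rangle_{V^2}$. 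That energy is not finite for data in $\mathcal V$, since $\bar\eta^0=\eta_2^0-\eta_1^0$ lies only in $\mathcal M^2$. If you use $\|w\|_{V^2}^2+\|\bar\eta\|_{\mathcal M^2}^2$ instead, the $V^3$ coupling term requires $\|\bar\eta\|_{\mathcal M^3}$, which is uncontrolled. So $\frac{\d}{\d t}\|W\|_{\mathcal V}^2\le\mathcal I(R)\|W\|_{\mathcal V}^2$ is not available by this route. $\mathcal V$ is not an energy space of the model, which is why Lemma~\ref{lem:CGDP55-56-updated}\textup{(B)} (i.e.\ \cite[Lemma~5.6]{CGDP2010}) rests on a different Lyapunov functional. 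If you cite that lemma instead of sketching your own closing argument, your proof coincides with the paper's.
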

The remainder of the section is occupied by the proofs of Propositions \ref{prop:V-smoothing} and \ref{prop:contdep-V}. It is easier to first collect the steps common to both into an estimate for the auxiliary problem \eqref{eq:aux-CG} below. The proofs proper occupy the last two subsections.

\subsection{The auxiliary problem in $\mathcal H^{1}$}\label{subsec:aux-problem}

Given $z\in\mathcal H^{1}$, $f\in {V^0}$, and a forcing
$g\in L^{2}_{\mathrm{loc}}(\mathbb R^{+};{V^1})$,   consider the Cauchy problem
\begin{equation}\label{eq:aux-CG}
\begin{cases}
\pt z + A_{\mu}z + \displaystyle\int_{0}^{\infty}h(s)A_{\mu}\xi(s)\,\d s = f+g(t),\\[2mm]
\pt \xi = T\xi + z,\\[1mm]
(z(0),\xi^{0})=(z_{0},\xi_{0})\in\mathcal H^{1}.
\end{cases}
\end{equation}
As explained in Section~\ref{sec:functional-setting}, the operator $-A_{\mu}$ generates an analytic
contraction semigroup on ${V^0}$, while $T$ generates the right-translation semigroup on each
${\mathcal M^{r}}$ (Lemma~\ref{lem:translation-dissipation}). Hence \eqref{eq:aux-CG} is an abstract
evolution equation on $\mathcal H^{1}$ and is well-posed in the standard mild sense.


\begin{lemma} \label{lem:CGDP55-56-updated}
Let $(z,\xi)$ be a solution of \eqref{eq:aux-CG} on $[0,\infty)$ with initial datum
$Z_0=(z(0),\xi^0)\in\mathcal H^{1}$, and let $g\in L^2_{\loc}(\R^+;{V^1})$.

\smallskip
\noindent\textup{(A)} 
Assume that there exist constants $\beta,\gamma\ge0$ such that
\begin{equation}\label{eq:CGDP55-ass1-updated}
\sup_{t\ge0}\int_t^{t+1}\|(z(\tau),\xi^\tau)\|_{\mathcal V}^2\,\d\tau \le \beta,
\qquad 
\sup_{t\ge0}\int_t^{t+1}\|g(\tau)\|_{{V^1}}^2\,\d\tau \le \gamma .
\end{equation}
Then there exists a constant $D=D(\beta,\gamma,\|f\|_{V^0})>0$ such that
\begin{equation}\label{eq:CGDP55-concl-updated}
\|(z(t),\xi^t)\|_{\mathcal V}\le D\left(1+t^{-1/2}\right)
\qquad \forall\,t>0.
\end{equation}
If in addition $Z_0\in\mathcal V$, then
\begin{equation}\label{eq:CGDP55-concl-V0-updated}
\|(z(t),\xi^t)\|_{\mathcal V}\le D\|Z_0\|_{\mathcal V}\e^{-t}+D
\qquad \forall\,t\ge0.
\end{equation}

\smallskip
\noindent\textup{(B)}
Assume $f=0$ and $Z_0\in\mathcal V$, and that there exists $\upsilon \ge0$ such that
\begin{equation}\label{eq:CGDP56-ass-updated}
\|g(t)\|_{{V^1}}\le  \upsilon\|(z(t),\xi^t)\|_{\mathcal V}
\qquad\text{a.e. }t\ge0.
\end{equation}
Then there exist constants $D_j=D_j(\upsilon)>0, \, j=1,2$ such that
\begin{equation}\label{eq:CGDP56-concl-updated}
\|(z(t),\xi^t)\|_{\mathcal V}\le D_1\|Z_0\|_{\mathcal V}\,e^{D_2 t}
\qquad \forall\,t\ge0.
\end{equation}
\end{lemma}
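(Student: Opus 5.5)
The plan is to apply the energy identity of Remark~\ref{cor:CG-energy} one level up, at $r=2$. The solution $(z,\xi)$ of \eqref{eq:aux-CG} is a solution of \eqref{eq:CG-QCext} with forcing $f+g$. Since the $\mathcal V$-norm is equivalent to $\mathcal E_{2,\nu}$ only in the sense that $\mathcal H^2 = V^2\times\mathcal M^3\neq\mathcal V$, we need care here. The space $\mathcal V=V^2\times\mathcal M^2$ pairs $V^2$ with the \emph{lower} memory space $\mathcal M^2$. We therefore build a hybrid functional
\[
\Lambda(t)\coloneqq \|z(t)\|_{V^2}^2+\|\xi^t\|_{\mathcal M^2}^2 ,
\]
and differentiate it directly. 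The $u$-component is tested with $A_\mu^2 z$ and the memory component in $\mathcal M^2$, using Lemma~\ref{lem:translation-dissipation}. The cross term $\langle \int h A_\mu\xi, A_\mu^2 z\rangle$ does not cancel against $\langle z,\xi\rangle_{\mathcal M^2}$, and this mismatch is the main obstacle. We handle it by writing the cross term as $\int h\langle \xi(s),z\rangle_{V^3}$, equivalently $\langle A_\mu^{1/2}\int h\xi, A_\mu^{3/2} z\rangle$. This is bounded by $\frac12\|z\|_{V^3}^2+C\|\xi\|_{\mathcal M^2}^2$, using \eqref{eq:memory-average-bound} and the dissipation $\|z\|_{V^3}^2$ produced by $A_\mu z$. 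The term $\langle z,\xi\rangle_{\mathcal M^2}\le C\|z\|_{V^2}\|\xi\|_{\mathcal M^2}$ is harmless. The forcing contributes $\langle f+g,z\rangle_{V^2}\le \frac14\|z\|_{V^3}^2+C(\|f\|_{V^1}^2+\|g\|_{V^1}^2)$. Since $f$ is only in $V^0$, we first subtract the stationary profile $z_f$ of \eqref{eq:main-zf} (solving the linear problem with $f$), or equivalently split off the part of the solution driven by $f$, which is time-independent and lies in $V^2$ after an $A_\mu^{-1}$. If exact decay of the memory is needed, we add the tail functional $\mathcal U$ from Lemma~\ref{lem:memory-tail} at level $2$ as in Remark~\ref{cor:CG-energy}, to get $-\nu\|\xi\|_{\mathcal M^2}^2$. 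The result is the differential inequality
\[
\frac{\d}{\d t}\Lambda_\nu + \Lambda_\nu \le C\bigl(\Lambda_\nu + 1+\|g\|_{V^1}^2\bigr),
\]
with $\Lambda_\nu\sim\|(z,\xi)\|_{\mathcal V}^2$.

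For part (A), apply the uniform Gronwall lemma to this inequality. The hypothesis \eqref{eq:CGDP55-ass1-updated} gives translation-bounded integrals of $\Lambda_\nu$ and of $\|g\|^2_{V^1}$, so $\Lambda_\nu(t)\le D$ for $t\ge1$. For $t\in(0,1)$, multiply by $t$: $\frac{\d}{\d t}(t\Lambda_\nu)\le \Lambda_\nu + Ct(\Lambda_\nu+1+\|g\|^2)$. Integrating on $(0,t)$ with the $\beta,\gamma$ bounds gives $t\Lambda_\nu(t)\le D$, hence the $t^{-1/2}$ rate in \eqref{eq:CGDP55-concl-updated}. For $Z_0\in\mathcal V$, keep the damping term $\nu^2\Lambda_\nu$ on the left instead of bounding it crudely. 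Then the variation of constants formula gives $\Lambda_\nu(t)\le \e^{-ct}\Lambda_\nu(0)+D$, with translation-bounded forcing. Up to relabelling the rate (rescaling constants), this is \eqref{eq:CGDP55-concl-V0-updated}.

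For part (B), with $f=0$ and $\|g\|_{V^1}^2\le\upsilon^2\Lambda_\nu$ by \eqref{eq:CGDP56-ass-updated}, the inequality becomes $\frac{\d}{\d t}\Lambda_\nu\le C(1+\upsilon^2)\Lambda_\nu$. Plain Gronwall then yields \eqref{eq:CGDP56-concl-updated} with $D_2=C(1+\upsilon^2)/2$ and $D_1$ from the norm equivalence. All multiplications are justified on Galerkin approximations, as in the warnings of Section~\ref{sec:functional-setting}.
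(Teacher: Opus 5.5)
\paragraph{The gap: the coupling term.} Your argument breaks at the step you yourself flag as the main obstacle, because the operator powers in the coupling term are miscounted. Testing the $z$-equation with $A_\mu^2 z$ gives
\[
\Bigl\langle \int_0^\infty h(s)A_\mu\xi(s)\,\d s,\,A_\mu^2 z\Bigr\rangle_{V^0}
=\Bigl\langle A_\mu^{3/2}\int_0^\infty h(s)\xi(s)\,\d s,\,A_\mu^{3/2} z\Bigr\rangle_{V^0}
=\int_0^\infty h(s)\langle \xi(s),z\rangle_{V^3}\,\d s .
\]
This term has total weight $A_\mu^3$. The expression $\langle A_\mu^{1/2}\int h\xi, A_\mu^{3/2}z\rangle$ that you use has weight $A_\mu^2$ and is not equal to it.

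Once the dissipation $\|z\|_{V^3}$ is spent on the $z$ side, Cauchy--Schwarz and \eqref{eq:memory-average-bound} leave $\|\xi\|_{\mathcal M^3}$ on the other side. The correct bound is therefore $\frac12\|z\|_{V^3}^2+C\|\xi\|_{\mathcal M^3}^2$, not $C\|\xi\|_{\mathcal M^2}^2$. Nothing in your scheme controls $\|\xi\|_{\mathcal M^3}$: not $\Lambda$, not hypothesis \eqref{eq:CGDP55-ass1-updated}, and not the tail functional $\mathcal U_2$, which only dissipates $\|\xi\|_{\mathcal M^2}^2$. Worse, by \eqref{eq:eta-representation} the initial history $\xi_0\in\mathcal M^2$ is merely translated. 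So in general $\xi^t\notin\mathcal M^3$ at any time, and no inequality involving that norm can close.

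This is exactly why $\mathcal V=V^2\times\mathcal M^2$ is not one of the energy spaces $\mathcal H^r$. Testing in $V^2$ is the $\mathcal H^2$ computation of Remark~\ref{cor:CG-energy}, and there the coupling terms cancel only with the memory measured in $\mathcal M^3$. Hence your differential inequality for $\Lambda_\nu$ is not established. Since both (A) and (B) are derived from it, neither part is proved.

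\paragraph{How the paper avoids it.} Following \cite[Lemma~5.5]{CGDP2010}, the paper works at level $r=1$. It takes the $V^1$-inner product of the $z$-equation with $\pt z$, i.e.\ tests against $A_\mu\pt z$.
\begin{itemize}
\item Since $\pt$ scales like $A_\mu$, this still produces $\frac12\frac{\d}{\d t}\|z\|_{V^2}^2$, now with dissipation $\|\pt z\|_{V^1}^2$.
\item The coupling $\langle \int h A_\mu\xi, A_\mu\pt z\rangle_{V^0}$ has total weight $A_\mu^2$. After the time derivative is moved onto $\xi$ via the $\xi$-equation, it pairs $\xi$ at the $\mathcal M^2$ level with $z$ at the $V^2$ level. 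The tail functional $\mathcal U_2[\xi]$ then supplies the memory dissipation.
\item The time-independent $f\in V^0$ is absorbed into the functional through $\langle f,\pt z\rangle_{V^1}=\frac{\d}{\d t}\langle f,A_\mu z\rangle_{V^0}$. This is the source of the shift $-C_0(1+\|f\|_{V^0}^2)$ in \eqref{eq:Y-equivalence}; your subtraction of $z_f$ would serve the same purpose.
\item The forcing $g$ enters only through $\|g\|_{V^1}\|\pt z\|_{V^1}$.
\end{itemize}

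Once the functional $\mathcal Y$ with \eqref{eq:Y-differential} is available, the rest of your proposal goes through essentially as written. That means the uniform Gronwall lemma for $t\ge1$, multiplication by $t$ on $(0,1)$, and plain Gronwall in (B), where $f=0$ and no constant terms appear. For the bound \eqref{eq:CGDP55-concl-V0-updated} you do not need net damping, which your inequality lacks anyway. Gronwall on $[0,1]$ combined with the uniform bound for $t\ge1$ already gives it.
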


\begin{proof}
The proof of \textup{(B)} is exactly the same as \cite[Lemma~5.6]{CGDP2010} with $A=-\Delta$ replaced by $A_\mu$,
since it uses only positivity, self-adjointness, compact resolvent of $A_\mu$, the Hilbert scale $V^r=D(A_\mu^{r/2})$,
dissipativity of $T$ on $\mathcal M^r$  from Lemma~\ref{lem:translation-dissipation}, and the memory tail identity
from Lemma~\ref{lem:memory-tail}.

\smallskip
For \textup{(A)}, we follow \cite[Lemma~5.5]{CGDP2010} and isolate the only point where the different hypothesis on $g$ is used.
As in \cite[Section~5]{CGDP2010}, one introduces a Lyapunov-type functional at level $r=1$,
obtained by taking the ${V^1}$-inner product of the first equation in \eqref{eq:aux-CG} with $\pt z$,
combining with the $\xi$-equation and the tail functional $\mathcal U_2[\xi]$ coming from Lemma~\ref{lem:memory-tail} with $r=1$,
and absorbing lower-order terms using \textbf{(K1)}--\textbf{(K2)}. This yields an absolutely continuous functional
$\mathcal Y:[0,\infty)\to\R$ satisfying the two properties

\smallskip
\noindent {(i) Equivalence.} There exist structural constants $c_0,C_0>0$ such that for all $t\ge0$,
\begin{equation}\label{eq:Y-equivalence} 
c_0\|(z(t),\xi^t)\|_{\mathcal V}^2 - C_0(1+\|f\|_{V^0}^2)\le \mathcal Y(t)
\le C_0\left(\|(z(t),\xi^t)\|_{\mathcal V}^2 + 1+\|f\|_{V^0}^2\right).
\end{equation}

\smallskip
\noindent {(ii) Differential inequality.} There exist structural constants $c_1,C_1>0$ such that for a.e.\ $t\ge0$,
\begin{equation}\label{eq:Y-differential} \
\frac{\d}{\d t}\mathcal Y(t) + c_1\|(z(t),\xi^t)\|_{\mathcal V}^2
\le C_1\left(1+\|f\|_{V^0}^2\right) + C_1\|g(t)\|_{{V^1}}^2.
\end{equation}
See \cite[(5.14)--(5.20)]{CGDP2010}. The derivation is purely functional and uses only the properties listed above,
so it carries over verbatim with $A_\mu$ in place of $-\Delta$.

At this stage, \cite[Lemma~5.5]{CGDP2010} estimates the forcing contribution via the pointwise growth assumption on $g$.
However, the subsequent argument uses \eqref{eq:Y-differential} only after integrating over unit intervals $(t,t+1)$ and
applying Cauchy--Schwarz and Young inequalities. Therefore the weaker translation-bounded assumption
$\sup_{t\ge0}\int_t^{t+1}\|g(\tau)\|_{{V^1}}^2\,\d\tau<\infty$ is sufficient.
Indeed, integrating \eqref{eq:Y-differential} over $(t,t+1)$ and using \eqref{eq:CGDP55-ass1-updated} gives
\[
\begin{split} &\quad
\mathcal Y(t+1)-\mathcal Y(t) + c_1\int_t^{t+1}\|(z(\tau),\xi^\tau)\|_{\mathcal V}^2\,\d\tau
\le C\left(1+\|f\|_{V^0}^2\right)+C\int_t^{t+1}\|g(\tau)\|_{{V^1}}^2\,\d\tau
\\ &\le C(\beta,\gamma,\|f\|_{V^0}),
\end{split}
\]
and from here the remainder of the proof proceeds exactly as in \cite[Lemma~5.5]{CGDP2010}.
\end{proof}

\subsection{Proof of Proposition~\ref{prop:V-smoothing}}
 
Let $Z(t)=(u(t),\eta^t)=S(t)z$. By \eqref{e:abs1}, there exists $\beta=\beta(R)\ge 0$ such that
\begin{equation}\label{eq:V-translation-bdd}
\sup_{t\ge 0}\int_t^{t+1}\|Z(\tau)\|_{\mathcal V}^2\,\d\tau
=\sup_{t\ge 0}\int_t^{t+1}\left(\|u(\tau)\|_{{V^2}}^2+\|\eta^\tau\|_{\mathcal M^2}^2\right)\,\d\tau
\le \beta.
\end{equation}
We apply Lemma~\ref{lem:CGDP55-56-updated} to the original system \eqref{eq:CG-QC}, viewed as the auxiliary problem
\eqref{eq:aux-CG} with forcing $g(t)=-\phi(u(t))$. 
Thus it remains to verify the corresponding growth assumption
on $g$.
Let $m\ge 1$ be the exponent in \textbf{(P2)}. Fix $t\ge 0$.
Since $q>2$ and $\Omega\subset\R^2$, the Sobolev embedding $W^{1,q}_0(\Omega)\hookrightarrow L^\infty(\Omega)$ holds.
Choose $r=2m$ in Theorem~\ref{thm:main-Linfty-smoothing}  to obtain the translation-bounded estimate
\begin{equation}\label{eq:u-Linfty-2m}
\sup_{t\ge 0}\ \|u\|_{L^{2m}(t,t+1;L^\infty(\Omega))} \le  \mathcal{I}(R).
\end{equation}
On the other hand, by ellipticity and the chain rule, almost everywhere in time
\[
\|\phi(u(\tau))\|_{{V^1}}^2
\lesssim_{k,\Omega} \|\nabla \phi(u(\tau))\|_{V^0}^2
=\|\phi'(u(\tau))\nabla u(\tau)\|_{V^0}^2
\le \|\phi'(u(\tau))\|_{\infty}^2\|\nabla u(\tau)\|_{V^0}^2.
\]
By \textbf{(P2)}, $|\phi'(s)|\le C(1+|s|^m)$, hence
\[
\|\phi'(u(\tau))\|_{\infty}^2 \le C\left(1+\|u(\tau)\|_{\infty}^{2m}\right).
\]
Moreover, Proposition~\ref{prop:main-H1-link} implies $\sup_{\tau\ge 0}\|u(\tau)\|_{{V^1}}\le \mathcal I(R)$, hence
$\sup_{\tau\ge 0}\|\nabla u(\tau)\|_{V^0}^2\le  \mathcal{I}(R)$ by \eqref{eq:form-bounds}.
Integrating over $(t,t+1)$ and using \eqref{eq:u-Linfty-2m} yields
\begin{equation}\label{eq:phi-H1-L2tb}
\int_t^{t+1}\|\phi(u(\tau))\|_{{V^1}}^2\,\d \tau
\le \mathcal{I}(R)\int_t^{t+1}\left(1+\|u(\tau)\|_{\infty}^{2m}\right)\,\d\tau
\le \mathcal{I}(R),
\qquad \forall t\ge 0.
\end{equation}
 Therefore Lemma~\ref{lem:CGDP55-56-updated}\textup{(A)} applies with $g=-\phi(u)$,
together with \eqref{eq:V-translation-bdd}, yielding \eqref{eq:V-smoothing-est}.
If $z\in\mathcal V$, the improved estimate \eqref{eq:V-smoothing-est-V0} follows from the second conclusion
\eqref{eq:CGDP55-concl-V0-updated} of Lemma~\ref{lem:CGDP55-56-updated} \textup{(A)}.
The existence of an $(\mathcal H^1,\mathcal V)$-absorbing ball as claimed
 is an immediate consequence of \eqref{eq:V-smoothing-est}, \eqref{eq:V-smoothing-est-V0} and of the semigroup property.

\subsection{Proof of Proposition \ref{prop:contdep-V}}
The proof follows the scheme of \cite[Proposition~6.3]{CGDP2010}. The necessary changes are only in the
estimate of the nonlinear difference term $\phi(u_2)-\phi(u_1)$ in the ${V^1}$-norm, in order to verify
assumption \eqref{eq:CGDP56-ass-updated} of Lemma~\ref{lem:CGDP55-56-updated}  for the
difference system.
Namely, it suffices to show that there exists
$\upsilon=\upsilon(R)$ such that for a.e.\ $t\ge 0$,
\begin{equation}\label{eq:g-Lip-target}
\|\phi(u_2(t))-\phi(u_1(t))\|_{{V^1}}\le \mathcal I(R)\|W(t)\|_{\mathcal V}.
\end{equation} 
Fix $p\in(1,p_0)$.
Since $u_j(t)\in {V^2}=D(A_\mu)$ and $\|u_j(t)\|_{{V^2}}\le \mathcal I(R)$ for $z_i\in B_{\mathcal V}(R)$ by \eqref{eq:V-smoothing-est-V0}, the elliptic estimate of
Proposition~\ref{prop:W2p-A}     gives for $j=1,2$
\[
\|u_j(t)\|_{W^{2,p}}\le C \|u_j(t)\|_{{V^2}}\le \mathcal I(R),
\qquad
\|w(t)\|_{W^{2,p}}\le C \|w(t)\|_{{V^2}}\le C\|W(t)\|_{\mathcal V}.
\]
Recalling the embedding $W^{2,p}(\Omega)\hookrightarrow L^\infty(\Omega)$, we also have  
\begin{equation}\label{eq:Linfty-from-V}
\|u_j(t)\|_{\infty}\le \mathcal I(R),
\qquad
\|w(t)\|_{\infty}\le C\|W(t)\|_{\mathcal V}.
\end{equation}
By the chain rule, 
\[
\|\phi(u_2)-\phi(u_1)\|_{{V^1}}\leq C \|\phi(u_2)-\phi(u_1)\|_{V^0}
+ C\|\nabla(\phi(u_2)-\phi(u_1))\|_{V^0}.
\]
Using the mean value theorem pointwise,
\[
\phi(u_2)-\phi(u_1)=\phi'(\theta)w,\qquad 
\phi'(u_2)-\phi'(u_1)=\phi''(\beta)w,
\]
for intermediate functions $\theta(x,t),\beta(x,t)$ between $u_1(x,t)$, $u_2(x,t)$.
Therefore, relying on \textbf{(P2)}
\[
\|\phi(u_2)-\phi(u_1)\|_{V^0}\le \|\phi'(\theta)\|_{\infty}\|w\|_{V^0} \leq C\left(1+ \|u_1\|_\infty^{m} + \|u_2\|_\infty^{m}\right) \|w\|_{V^0}  \leq \mathcal I (R) \|W\|_{\mathcal V}.
\]
Similarly
\begin{equation}\label{eq:grad-diff-phi}
\begin{split}
 &\quad \|\nabla(\phi(u_2)-\phi(u_1))\|_{V^0}
\le \|\phi''(\beta)\|_{\infty}\|w\|_{\infty}\|\nabla u_2\|_{V^0}
+\|\phi'(u_1)\|_{\infty}\|\nabla w\|_{V^0}
\\ & \leq C\left(1+ \|u_1\|_\infty^{m-1} + \|u_2\|_\infty^{m-1}\right) \|w\|_{\infty}\|\nabla u_2\|_{V^0} +  C\left(1+ \|u_1\|_\infty^{m}  \right)\|\nabla w\|_{V^0}\\ &\leq \mathcal I (R) \|W\|_{\mathcal V}.
\end{split}
\end{equation}
Combining the  last two displayed bounds  yields \eqref{eq:g-Lip-target}, so that 
  Lemma~\ref{lem:CGDP55-56-updated} \textup{(B)} applies    to the difference system, whence \eqref{eq:contdep-V}.

\section{Regular exponentially attracting sets}\label{sec:reg-exp-attr}
This section serves the purpose of showing that the semigroup generated by Problem \eqref{eq:CG-QC} on the regular space $\mathcal V$ is exponentially attracted, up to subtraction of the solution of the linear stationary problem, by a ball in a more regular space which is compactly embedded into $\mathcal V$, and that we proceed to define below.

Throughout, we refer to the $\mathcal V$--absorbing ball $\mathbb B_{\mathcal V}=B_{\mathcal V}(R_{\mathcal V})$
from Proposition~\ref{prop:V-smoothing}.
Recall the vector $z_f$ defined in \eqref{eq:main-zf}.  
Since $f\in V^0$, we have $u_f\in {V^2}=D(A_\mu)$ and therefore $z_f\in\mathcal V$.
Moreover, the normalization \eqref{e:kappanorm} yields
\[\int_0^\infty h(s)A_\mu\eta_f(s)\,\d s=A_\mu u_f,\] hence $z_f$ is a stationary solution of Problem \eqref{eq:CG-QC}.
With the aim of achieving compactness of the memory component, introduce the  functional $\Xi:\mathcal M^2\to \mathbb R$ 
\begin{equation}\label{eq:Xi-h-def}
\Xi[\eta]\coloneqq \|T\eta\|_{\mathcal M^2}^2
+\sup_{x\ge1} x\int_{(0,x^{-1})\cup(x,\infty)}  h(s)\|\eta(s)\|_{{V^2}}^2\,\d s,
\end{equation}
and define
\begin{equation}\label{eq:Wh-def}
\mathcal W\coloneqq \left\{\eta\in \mathcal M^4\cap \mathcal D(T_2):\Xi[\eta]<\infty\right\},
\qquad
\|\eta\|_{\mathcal W}^2\coloneqq \|\eta\|_{\mathcal M^4}^2+\Xi[\eta].
\end{equation}
The compactifying phase space we need is then
\begin{equation}\label{eq:Z-def}
\mathcal Z\coloneqq {V^3}\times \mathcal W,
\qquad
\|(u,\eta)\|_{\mathcal Z}^2\coloneqq \|u\|_{{V^3}}^2+\|\eta\|_{\mathcal W}^2.
\end{equation}

\begin{remark}\label{rem:Z-compact} Indeed,
the embedding ${V^3}\Subset {V^2}$ is compact by spectral theory, relying on the compact resolvent of $A_\mu$.
The choice of $\Xi$ is designed so that bounded sets of $\mathcal W$ are relatively compact in
$\mathcal M^2$. Consequently,
$
\mathcal Z\Subset \mathcal V.
$
The proof is identical to \cite[Remark~7.1]{CGDP2010}, with $h$ in place of $\mu$, and we omit it.
\end{remark} With these definitions in place, we state the main result of this section.
\begin{proposition}[Regular exponentially attracting set]\label{thm:regular-exp-attract-sec7}
There exist $R_\star>0$ and $t_\star>0$, depending only on the structural parameters, such that the set
\begin{equation}\label{eq:B-def-sec7}
\mathbb S_{\mathcal Z}\coloneqq z_f+\mathbb B_{\mathcal Z}(R_\star)
\end{equation}
satisfies:
\begin{itemize}
\item[\textup{(i)}]  
$S(t) \mathbb S_{\mathcal Z}\subset \mathbb S_{\mathcal Z}$ for all $t\ge t_\star$, 
\item[\textup{(ii)}]  
there exist structural constants $C\ge1$ and $\varepsilon>0$ such that
\begin{equation}\label{eq:exp-attract-B-sec7}
\dist_{\mathcal V}\left(S(t)\mathbb B_{\mathcal V},\mathbb S_{\mathcal Z}\right)\le C\e^{-\varepsilon t},
\qquad \forall t\ge0.
\end{equation}
\end{itemize}
\end{proposition}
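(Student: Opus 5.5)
We follow the decomposition scheme of \cite[Section~7]{CGDP2010}, translating it to the Beltrami setting. Fix $z\in\mathbb B_{\mathcal V}$ and write $S(t)z=(u(t),\eta^t)$. Again choose a structural $\ell\ge0$ such that $\phi_0(r)=\phi(r)+\ell r$ is monotone. We then split $S(t)z-z_f=L(t)z+K(t)z$. Here $L(t)z=(v(t),\xi^t)$ solves the linear homogeneous problem
\[
\pt v+A_\mu v+\int_0^\infty h(s)A_\mu\xi(s)\,\d s=0,\qquad \pt\xi=T\xi+v,
\]
with initial datum $z-z_f$. The remainder $K(t)z=(w(t),\zeta^t)$ solves the corresponding problem with zero initial datum and forcing $-\phi(u(t))+\tfrac12 f$. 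The shift by $z_f$ (stationary for $\phi=0$) accounts exactly for the term $f$, since $A_\mu u_f+\int h A_\mu\eta_f=2A_\mu u_f=f$.

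\textbf{Step 1: decay of $L$.} Apply Remark~\ref{cor:CG-energy} with $r=2$ and $g=0$. This gives
\[
\|L(t)z\|_{\mathcal V}\le C\e^{-\nu^2t/2}\|z-z_f\|_{\mathcal V}\le C\e^{-\varepsilon t},
\]
because $z_f\in\mathcal V$ with structural norm and $z\in\mathbb B_{\mathcal V}$.

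\textbf{Step 2: $\mathcal Z$-bound for $K$.} We aim for $\|K(t)z\|_{\mathcal Z}\le R_\star$ uniformly in $t\ge0$ and $z\in\mathbb B_{\mathcal V}$. For the $V^3\times\mathcal M^4$ part, apply Remark~\ref{cor:CG-energy} with $r=3$ to the $K$-system with $\mathcal E_{3,\nu}(0)=0$. This requires the forcing $g=-\phi(u)+\tfrac12 f$ to be translation bounded in $V^3$. The term $f$ is only in $V^0$, so it must be kept out of the $r=3$ estimate. The clean way to arrange this is to let $L$ carry the datum $z$ and $K$ carry the forcing $-\phi(u)$, with $z_f$ absorbing $f$. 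One then checks that the resulting $L$-component still decays after subtracting $z_f$. Concretely, set $S(t)z=z_f+L(t)(z-z_f)+K(t)z$, where $K$ solves the system with forcing $-\phi(u)$ and zero datum; this decomposition is exact by linearity.

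By Remark~\ref{cor:CG-energy} at $r=3$ we need $\int_t^{t+1}\|\phi(u)\|_{V^2}^2\,\d\tau\le C$, where $\|\phi(u)\|_{V^2}=\|A_\mu\phi(u)\|_{V^0}$. Alternatively, use $\langle\phi(u),w\rangle_{V^3}=\langle A_\mu\phi(u),A_\mu^{2}w\rangle$ and interpolate against $\|w\|_{V^4}$. I would first try to show $\phi(u)\in D(A_\mu)$ with $\|A_\mu\phi(u)\|_{V^0}\le\mathcal I(\|u\|_{V^2})$, using the chain rule
\[
A_\mu\phi(u)=\phi'(u)A_\mu u-\phi''(u)\,\sigma_\mu\nabla u\cdot\nabla u.
\]
The first term is controlled by $\|u\|_\infty$ and $\|u\|_{V^2}$, via Proposition~\ref{prop:W2p-A} and the embedding $W^{2,p}\hookrightarrow L^\infty$. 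For the second we need $\nabla u\in L^4$. Proposition~\ref{prop:W2p-A} gives $u\in W^{2,p}$ for $p<2$, and the embedding $W^{2,p}\hookrightarrow W^{1,2p/(2-p)}$ yields $\nabla u\in L^4$ once $p\ge 4/3$. Also $\phi(u)|_{\partial\Omega}=0$ since $\phi(0)=0$. Together with Proposition~\ref{prop:V-smoothing} (uniform $\mathcal V$ bounds on $\mathbb B_{\mathcal V}$), this makes the forcing bounded in $V^2$ uniformly in time. Remark~\ref{cor:CG-energy} at level $r=2$, applied with $\langle g,w\rangle_{V^2}$ and the dissipation $\|w\|_{V^3}^2$, then gives $\|K(t)z\|_{V^3\times\mathcal M^4}\le C$.

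For $\Xi[\zeta^t]$: since $\zeta^0=0$, the representation \eqref{eq:eta-representation} gives $T\zeta^t(s)=-w(t-s)\1_{s\le t}$. Hence
\[
\|T\zeta^t\|_{\mathcal M^2}^2\le\kappa_0\sup_\tau\|w(\tau)\|_{V^2}^2,
\]
and the sup term is handled exactly as in \cite[Lemma~7.3]{CGDP2010} using \textbf{(K2)}. This step is purely functional.

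\textbf{Step 3: conclusion.} Steps 1--2 give
\[
\dist_{\mathcal V}\bigl(S(t)\mathbb B_{\mathcal V},z_f+B_{\mathcal Z}(R_\star)\bigr)\le C\e^{-\varepsilon t},
\]
which is (ii). For (i), note that $\mathbb S_{\mathcal Z}$ is bounded in $\mathcal V$, so it is absorbed into $\mathbb B_{\mathcal V}$ after a time $t_\star$. Enlarging $R_\star$ via the bound in Step 2, which is uniform in $t$, and combining with the $\mathcal Z$-dissipativity of $L$ (Remark~\ref{cor:CG-energy} at $r=3$ applied to data in $\mathbb S_{\mathcal Z}-z_f$) yields invariance for $t\ge t_\star$.

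The main obstacle is Step 2: bounding $\phi(u)$ in $V^2=D(A_\mu)$. This needs the chain rule for the rough operator together with the boundary condition, and $W^{1,4}$ control of $\nabla u$, which is exactly where $\mu\in W^{1,2}$ and Proposition~\ref{prop:W2p-A} enter.
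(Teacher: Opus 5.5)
Your decomposition $S(t)z=z_f+L(t)(z-z_f)+K(t)z$, with $K$ driven by $-\phi(u)$ from zero data, is the paper's. So is your bound on $\|\phi(u)\|_{V^2}$ via the chain rule, $p\ge\frac43$ and $\phi(0)=0$. The gap is Step~1. Remark~\ref{cor:CG-energy} at level $r=2$ controls $\mathcal E_{2,\nu}$, which is equivalent to the norm of $\mathcal H^2=V^2\times\mathcal M^3$, not of $\mathcal V=V^2\times\mathcal M^2$; in $\mathcal V$ the memory sits at the same spatial level as $u$. For $z\in\mathbb B_{\mathcal V}$ the memory component is only in $\mathcal M^2$. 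Even $\eta_f(s)=s\,u_f$ lies in $\mathcal M^3$ only if $u_f\in V^3$, i.e.\ $f\in V^1$, which \textbf{(F)} does not give. So $\mathcal E_{2,\nu}(z-z_f)$ is in general infinite, and the bound $\|L(t)(z-z_f)\|_{\mathcal V}\le C\e^{-\nu^2t/2}\|z-z_f\|_{\mathcal V}$ does not follow. Dropping to $r=1$ gives a finite energy, but decay only in $\mathcal H^1=V^1\times\mathcal M^2$, one derivative short for $u$. Moreover, the memory term $\int_0^\infty h(s)A_\mu\eta(s)\,\d s$ is merely in $V^0$ when $\eta\in\mathcal M^2$, and a $V^0$ forcing alone does not keep $u$ bounded in $V^2$. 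Exponential stability of $L(t)$ on the mixed-level space $\mathcal V$ is therefore a separate estimate. The paper takes it from \cite[Proposition~6.5]{CGDP2010}. It rests on the functional obtained from the $V^1$ product of the $u$-equation with $\pt u$ (the mechanism behind Lemma~\ref{lem:CGDP55-56-updated}), not on the plain energy identity. You need to supply this ingredient.

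Two smaller points. First, in Step~2 the $V^3\times\mathcal M^4$ bound comes from level $r=3$, not $r=2$, which only yields $V^2\times\mathcal M^3$. At $r=3$ one uses $2\langle g,w\rangle_{V^3}\le 2\|g\|_{V^2}\|w\|_{V^4}$ and absorbs it into the dissipation $\|w\|_{V^4}^2$, as you yourself observed. Second, for (i), applying Step~2 directly to data in $\mathbb S_{\mathcal Z}$ gives a bound $\mathcal I(R_\star)$, so ``enlarging $R_\star$'' is circular. The fix is:
\begin{itemize}
\item fix $R_\star$ first from the structural bound valid for data in $\mathbb B_{\mathcal V}$;
\item for $z\in\mathbb S_{\mathcal Z}$, use that the $\mathcal Z$-norm of $K(t)z$ forgets, exponentially, the transient before the trajectory enters $\mathbb B_{\mathcal V}$;
\item use that $L(t)(z-z_f)$ decays in the full $\mathcal Z$-norm, including $\Xi$ (the estimate of \cite[Lemma~7.4]{CGDP2010}, not covered by Remark~\ref{cor:CG-energy});
\item only then choose $t_\star=t_\star(R_\star)$.
\end{itemize}
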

The proof rests on a tailored decomposition of the semigroup $ S(t)$ on $\mathcal V$ akin to that of \cite[Section 7]{CGDP2010}.  We postpone it until after the main difference, which is the suitable estimation of the nonlinear term in the regular space ${V^2}$. This estimate is strongly affected by the variable and non-smooth coefficient setting we are placed in by the Beltrami operator.  \begin{lemma}[Nonlinear estimate at the graph level ${V^2}$]\label{lem:phi-in-H2}
Assume \eqref{e:ellipticitymu}, and in addition that \eqref{e:regmu} for some $p_0\ge2$.
There holds
\begin{equation}\label{eq:phi-H2-est-subbed}
\|\phi(u)\|_{{V^2}}
\le C\left(1+\|u\|_{{V^2}}^{m+1}\right).
\end{equation}
\end{lemma}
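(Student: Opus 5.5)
The plan is to compute $A_\mu\phi(u)$ explicitly for $u\in V^2=D(A_\mu)$ and bound it in $L^2$. First I would verify that $\phi(u)\in D(A_\mu)$ and record the formula
\[
A_\mu \phi(u)=\phi'(u)A_\mu u-\phi''(u)\,\sigma_\mu\nabla u\cdot\nabla u .
\]
Here $\phi(0)=0$ and $u\in H^1_0\cap L^\infty$ give $\phi(u)\in H^1_0(\Omega)$. For $\varphi\in H^1_0$ the chain rule gives
\[
a_\mu(\phi(u),\varphi)=\int_\Omega\phi'(u)\sigma_\mu\nabla u\cdot\nabla\varphi\,\d x .
\]
One can then write $\phi'(u)\nabla\varphi=\nabla(\phi'(u)\varphi)-\phi''(u)\varphi\nabla u$ and test the weak equation for $u$ against $\phi'(u)\varphi$. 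This test function lies in $H^1_0$ provided $u\in W^{1,4}\cap L^\infty$, and the next step shows that it does. This yields the displayed identity in the weak sense, and so $\phi(u)\in D(A_\mu)$ once the right-hand side is shown to lie in $L^2$. Since $\|\phi(u)\|_{V^2}=\|A_\mu\phi(u)\|_{V^0}$, it then suffices to bound the two terms separately.

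Second, I would collect the regularity of $u$ from Proposition~\ref{prop:W2p-A}. With $p_0\ge2$, fix some $p\in(1,p_0)$, say $p=3/2$. Then $\|u\|_{W^{2,p}}\le C(\|A_\mu u\|_{L^2}+\|u\|_{L^2})\le C\|u\|_{V^2}$, using $L^2\subset L^p$ and the Poincar\'e inequality \eqref{eq:poincare-Hscale}. In two dimensions, $W^{2,p}$ with $p>1$ embeds into $L^\infty$ and $\nabla u\in W^{1,p}\hookrightarrow L^{2p/(2-p)}$. With $p=3/2$ this gives $L^6\subset L^4$, so
\[
\|u\|_\infty+\|\nabla u\|_{L^4}\le C\|u\|_{V^2}.
\]

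Third, I would estimate the two terms using \textbf{(P2)}. The first satisfies $\|\phi'(u)A_\mu u\|_{L^2}\le C(1+\|u\|_\infty^m)\|u\|_{V^2}$. The second, by the ellipticity \eqref{eq:ellipticity}, satisfies $\|\phi''(u)\sigma_\mu\nabla u\cdot\nabla u\|_{L^2}\le CM(k)(1+\|u\|_\infty^{m-1})\|\nabla u\|_{L^4}^2$. Inserting the bounds from the previous step gives contributions of order $\|u\|_{V^2}+\|u\|_{V^2}^{m+1}$ and $\|u\|_{V^2}^2+\|u\|_{V^2}^{m+1}$. All intermediate powers lie between $1$ and $m+1$ (recall $m\ge1$), so Young's inequality absorbs them into $C(1+\|u\|_{V^2}^{m+1})$, which is \eqref{eq:phi-H2-est-subbed}.

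The main obstacle is the $L^4$ control of $\nabla u$, together with the rigorous justification that $\phi(u)\in D(A_\mu)$. The abstract graph space $V^2$ alone does not give it; rough coefficients only yield $W^{1,q}$ for $q<q_*(k)$ via Proposition~\ref{prop:beltrami-W1q}(i), which does not reach $q=4$ for all $k$. This is precisely where Proposition~\ref{prop:W2p-A}, and hence \eqref{e:regmu}, is needed. If the test-function justification proves delicate, I would first prove the identity for $u$ approximated in the graph norm by regular elements, for instance Galerkin eigenfunction expansions, and then pass to the limit using the above bounds, which are continuous in the $W^{2,p}$ topology.
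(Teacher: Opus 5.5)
Your proposal is correct and follows the paper's proof step by step. Like the paper, you use Proposition~\ref{prop:W2p-A} with $p\in(\frac43,p_0)$ to get $\|u\|_\infty+\|\nabla u\|_{L^4}\le C\|u\|_{V^2}$, then the identity $A_\mu\phi(u)=\phi'(u)A_\mu u-\phi''(u)(\sigma_\mu\nabla u)\cdot\nabla u$, then a two-term $L^2$ bound. The one difference is minor: you show $\phi(u)\in D(A_\mu)$ directly by testing the weak equation for $u$ with $\phi'(u)\varphi$, whereas the paper cites the $H(\mathrm{div})$ product rule "justified by Galerkin approximation"; your direct argument is self-contained and somewhat cleaner.
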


\begin{proof} Fix $p  \in (\frac43,p_0)$.
Applying Proposition~\ref{prop:W2p-A} with $g=A_\mu u$, and taking advantage of spectral calculus, \eqref{eq:uniform-coercivity}, and standard embeddings on bounded domains,
\begin{equation}\label{eq:W2p-by-H2}
\|u\|_{W^{2,p}}
\le C\left(\|A_\mu u\|_{L^{p}}+\|u\|_{V^0}\right)
\le C\|u\|_{{V^2}}.
\end{equation}
  Sobolev embeddings in dimension $2$ then yield
\begin{equation}\label{eq:Linfty-and-L4-by-W2p}
\|u\|_{\infty}+\|\nabla u\|_{L^4(\Omega)}
\le C\|u\|_{W^{2,p}}
\le C\|u\|_{{V^2}}.
\end{equation}
Combining    \eqref{eq:Linfty-and-L4-by-W2p} with  assumption (P2),
\begin{equation}\label{eq:phi-derivs-subbed}
\|\phi'(u)\|_{\infty}\le  C\left(1+\|u\|_{{V^2}}^m\right),
\qquad
\|\phi''(u)\|_{\infty}\le   C\left(1+\|u\|_{{V^2}}^{m-1}\right).
\end{equation}
We now use the chain rule at the level of $A_\mu$. Namely,
set \[F\coloneqq \sigma_\mu\nabla u.\] Since $u\in {V^2}$, we have $F\in L^2(\Omega;\R^2)$ and
$\mathrm{div} F=-A_\mu u\in L^2(\Omega)$, hence $F\in H(\mathrm{div};\Omega)$. Also, by Step~1 and Step~2,
$\phi'(u)\in L^\infty$ and $\nabla(\phi'(u))=\phi''(u)\nabla u\in L^4$, so
$\phi''(u)\,(\sigma_\mu\nabla u)\cdot \nabla u \in L^2$.
The standard product rule in distributions, justified by Galerkin approximation, yields
\[
A_\mu(\phi(u))
=-\mathrm{div}\left(\sigma_\mu\nabla(\phi(u))\right)
=-\mathrm{div}\left(\phi'(u)F\right)
=-\phi'(u)\mathrm{div} F-\nabla(\phi'(u))\cdot F,
\]
hence, using $\mathrm{div} F=-A_\mu u$ and $\nabla(\phi'(u))=\phi''(u)\nabla u$,
\[
A_\mu(\phi(u))=\phi'(u)\,A_\mu u-\phi''(u)\,(\sigma_\mu\nabla u)\cdot\nabla u.
\]
Therefore, by \eqref{eq:ellipticity},
\begin{equation}\label{eq:phi-H2-core}
\|\phi(u)\|_{{V^2}}=\|A_\mu(\phi(u))\|_{V^0}
\le \|\phi'(u)\|_{\infty}\|A_\mu u\|_{V^0}
+ C(k)\|\phi''(u)\|_{\infty}\|\nabla u\|_{L^4(\Omega)}^2.
\end{equation}
Substituting \eqref{eq:phi-derivs-subbed} and \eqref{eq:Linfty-and-L4-by-W2p} into \eqref{eq:phi-H2-core} gives
\[
\|\phi(u)\|_{{V^2}}
\le C\left[(1+\|u\|_{{V^2}}^m)\|u\|_{{V^2}}+(1+\|u\|_{{V^2}}^{m-1})\|u\|_{{V^2}}^2\right]
\le C\left(1+\|u\|_{{V^2}}^{m+1}\right),
\]
which is \eqref{eq:phi-H2-est-subbed}.
\end{proof}
We now turn to the proof of Proposition \ref{thm:regular-exp-attract-sec7}. Let $R>0$, fix  $z\in \mathbb B_{\mathcal V}(R)$ and indicate $S(t)z=(u(t),\eta^t)$.   The upshot is  the decomposition
\begin{equation}\label{eq:decomp-sec7}
S(t)z = z_f+\ell_1(t;z)+\ell_2(t;z), \qquad \ell_1(\cdot;z):[0,\infty)\to\mathcal V,
\quad
\ell_2(\cdot;z):[0,\infty)\to\mathcal Z,
\end{equation}
with the property that
\begin{equation}\label{eq:ell1-decay}
\|\ell_1(t;z)\|_{\mathcal V}\le \mathcal I(R)\e^{-c t},\qquad 
 t\ge0,
\end{equation}
and
\begin{equation}\label{eq:ell2-Z-bdd}
\sup_{t\ge0}\ \|\ell_2(t;z)\|_{\mathcal Z}\le \mathcal I(R).
\end{equation}
Moreover, referring to \eqref{eq:main-zf}, if $z-z_f\in \mathbb B_{\mathcal Z}(\varrho)$ for some $\varrho>0$, then
\begin{equation}\label{eq:ell1-Z-est}
\|\ell_1(t;z)\|_{\mathcal Z}\le \mathcal I(\varrho)\e^{-c t}+\mathcal I(\varrho), \qquad
t\ge0.
\end{equation}
First, we define $\ell_1,\ell_2$.
Let $L(t)$ denote the linear homogeneous semigroup associated with
\eqref{eq:CG-QC} with $\phi\equiv0$ and $f\equiv0$.
Set
\begin{equation}\label{eq:ell1-ell2-def}
\ell_1(t;z)\coloneqq L(t)\left(z-z_f\right).
\end{equation}
Taking differences, $\ell_2(t;z)$ coincides with    the unique solution $W(t)=(w(t),\xi^t)$ of the forced linear system
\begin{equation}\label{eq:W-system-sec7}
\left\{
\begin{aligned}
&\partial_t w + A_{\mu}w + \int_0^\infty h(s)A_\mu\xi(s)\,\d s = -\phi(u(t)),\\
&\partial_t \xi = T\xi + w,\\
&W(0)=(0,0),
\end{aligned}
\right.
\end{equation}
The exponential decay of the linear homogeneous semigroup $L(t)$ on $\mathcal V$ can be read directly from
 \cite[Proposition~6.5]{CGDP2010}, as it rests on the abstract properties of the Hilbert scale $V^r$ and on the memory kernel assumptions, which are equivalent to those in the quoted proposition.
 
   We move on to regularity of $\ell_2$.
The energy reconstruction at the $\mathcal Z$--level and the control of the corresponding  history functional $\Xi$
are identical to \cite[Step~2 of Lemma~7.3 together with Lemma~7.4]{CGDP2010} and will not be repeated  once  the boundedness of the forcing
$g(t)=-\phi(u(t))$ in ${V^2}$ on the $\mathcal V$--absorbing set is verified.
Recalling Proposition \ref{prop:main-H1-link}, as $z\in\mathbb B_{\mathcal V}(R)$  we have
\begin{equation}\label{eq:u-H2-unif-sec7}
\sup_{t\ge0}\ \|u(t)\|_{{V^2}}\le \mathcal I(R).
\end{equation}
Applying Lemma~\ref{lem:phi-in-H2} pointwise in time then  yields
\begin{equation}\label{eq:phi-H2-unif-sec7}
\sup_{t\ge0}\ \|\phi(u(t))\|_{{V^2}}\le \mathcal I(R).
\end{equation}
With \eqref{eq:phi-H2-unif-sec7} available, the argument of \cite[Lemma~7.3]{CGDP2010} applies verbatim to
\eqref{eq:W-system-sec7} and yields \eqref{eq:ell2-Z-bdd}. The refined estimate \eqref{eq:ell1-Z-est}
follows from the same functional bound on $\Xi$ as in \cite[Lemma~7.4]{CGDP2010}.
This completes the proof of Proposition~\ref{thm:regular-exp-attract-sec7}.

\section{Proof of Theorem \ref{thm:main-exp-attractor-V}}\label{sec:exp-attractor-V}
This section completes the proof of Theorem \ref{thm:main-exp-attractor-V} and of Corollary \ref{cor:main-global-attractors}. 

\subsection{Proof of Theorem \ref{thm:main-exp-attractor-V}, points (i)--(iv)}
 The first four steps of the proof are the actual construction of the set $\mathfrak E$ as a regular exponential attractor for the semigroup generated by \eqref{eq:CG-QC} on $\mathcal V$, cf.\ Proposition~\ref{prop:V-smoothing}.

Recall that we have constructed a  regular exponentially attracting set $\mathbb S_{\mathcal Z}$ in
Theorem~\ref{thm:regular-exp-attract-sec7}, and the compact embedding $\mathcal Z\Subset \mathcal V$,
see Remark~\ref{rem:Z-compact}.
By transitivity of exponential attraction \cite{FGMZ}, since the absorbing ball $\mathbb B_{\mathcal V}$ is exponentially
attracted by $\mathbb S_{\mathcal Z}$ in $\mathcal V$, cf.\ Theorem~\ref{thm:regular-exp-attract-sec7},
it suffices to construct an exponential attractor for the restriction of the dynamics to  the set $\mathbb S_{\mathcal Z}$ given by
\begin{equation}\label{eq:B-set-sec8}
 \mathbb S_{\mathcal Z}=z_f+\mathbb B_{\mathcal Z}(R_\star)
\end{equation} for some $R_\star>0$ depending only on structural parameters. Theorem~\ref{thm:regular-exp-attract-sec7} also entails the existence of  a structural $t_\star>0$ with the property \begin{equation}\label{eq:Sinvariance8}
 S(t)\mathbb S_{\mathcal Z} \subset  \mathbb S_{\mathcal Z} , \qquad t\geq t_\star,\end{equation}
Combining the  main result of \cite{EMZ} on exponential attractors of discrete semigroups with the standard discrete-to-continuous argument, see e.g.\ \cite[Section 8]{CGDP2010}, the existence of a set $\mathfrak E$ satisfying (i), (iii) and (iv) of Theorem \ref{thm:main-exp-attractor-V} follows from the verification of the two properties \textbf{(S)} and \textbf{(T)} below. \smallskip
\noindent\textbf{(S) Squeezing property}
There exist  structural constants $\vartheta\in(0,1)$ and $C>0$ such that for all $z_1,z_2\in  \mathbb S_{\mathcal Z}$
there exist decompositions
\begin{equation}\label{eq:squeeze-decomp-sec8}
\begin{split}
&S(t_\star)z_1-S(t_\star)z_2=\ell_1+\ell_2,
\\
&
\|\ell_1\|_{\mathcal V}\le \vartheta\|z_1-z_2\|_{\mathcal V},
\qquad
\|\ell_2\|_{\mathcal Z}\le C\|z_1-z_2\|_{\mathcal V}.
\end{split}
\end{equation}

\smallskip
\noindent\textbf{(T) Lipschitz continuity in time}
There exists $C>0$ such that for all $t,\tau\in[t_\star,2t_\star]$,
\begin{equation}\label{eq:time-Lip-sec8}
\sup_{z\in \mathbb S_{\mathcal Z}}\ \|S(t)z-S(\tau)z\|_{\mathcal V}\le C|t-\tau|.
\end{equation}

With (S) and (T) in hand,  one constructs an exponential attractor $\mathfrak E_d\subset  \mathbb S_{\mathcal Z}$ for the discrete map
$S(t_\star): \mathbb S_{\mathcal Z}\to  \mathbb S_{\mathcal Z}$, and then sets
\begin{equation}\label{eq:E-continuous-sec8}
\mathfrak E\coloneqq \bigcup_{t\in[t_\star,2t_\star]} S(t)\mathfrak E_d.
\end{equation}
Property (T) ensures that the map $(t,z)\mapsto S(t)z$ is Lipschitz from
$[t_\star,2t_\star]\times B$ into $\mathcal V$, hence $\mathfrak E$ is compact in $\mathcal V$
and has finite $\mathcal V$-fractal dimension there. Positive invariance is inherited from  positive invariance of $  \mathbb S_{\mathcal Z}$
and \eqref{eq:E-continuous-sec8}. The exponential attraction in $\mathcal V$ then follows by transitivity.

The proof of (S)-(T) can be obtained along the lines of \cite[\S8]{CGDP2010} once  the estimates for the differences of the   nonlinearities appearing therein are replaced by suitable bounds compatible with the two-dimensional and rough coefficient setting.
 
\begin{lemma}[Local Lipschitz of $\phi$ in ${V^2}$]\label{lem:phi-lip-H2}
Assume   \eqref{e:regmu} holds for some $p_0 \geq 2$. Then 
\begin{equation}\label{eq:phi-lip-H2}
\|\phi(u_2)-\phi(u_1)\|_{{V^2}}\le \mathcal I(\|u_1\|_{{V^2}}+ \|u_2\|_{{V^2}}) \|u_2-u_1\|_{{V^2}}.
\end{equation}
\end{lemma}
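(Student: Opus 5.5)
The plan is to copy the proof of Lemma~\ref{lem:phi-in-H2}, now applied to the difference $w\coloneqq u_2-u_1$.

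\textbf{Step 1: the chain-rule identity.} Exactly as in that proof, for $u\in V^2$ we have
\[
A_\mu(\phi(u))=\phi'(u)\,A_\mu u-\phi''(u)\,(\sigma_\mu\nabla u)\cdot\nabla u.
\]
Writing this for $u_1$ and $u_2$ and subtracting, we will split
\[
A_\mu(\phi(u_2))-A_\mu(\phi(u_1)) = I_1+I_2+I_3+I_4 ,
\]
where
\begin{itemize}
\item $I_1=\phi'(u_2)\,A_\mu w$;
\item $I_2=\big(\phi'(u_2)-\phi'(u_1)\big)A_\mu u_1$;
\item $I_3=-\big(\phi''(u_2)-\phi''(u_1)\big)(\sigma_\mu\nabla u_2)\cdot\nabla u_2$;
\item $I_4=-\phi''(u_1)\big[(\sigma_\mu\nabla w)\cdot\nabla u_2+(\sigma_\mu\nabla u_1)\cdot\nabla w\big]$.
\end{itemize}
The bracket in $I_4$ uses the symmetry of $\sigma_\mu$ to expand the difference of the two quadratic forms.

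\textbf{Step 2: the embedding.} Fix $p\in(\frac43,p_0)$. As in \eqref{eq:W2p-by-H2} and \eqref{eq:Linfty-and-L4-by-W2p}, which rest on Proposition~\ref{prop:W2p-A}, we get
\[
\|v\|_\infty+\|\nabla v\|_{L^4}\le C\|v\|_{V^2}\qquad\text{for every } v\in V^2 .
\]
We apply this to $v=u_1,\,u_2,\,w$. This is the only point where $\mu\in W^{1,p_0}$ enters.

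\textbf{Step 3: the four terms in $L^2$.} Set $\rho\coloneqq\|u_1\|_{V^2}+\|u_2\|_{V^2}$.
\begin{itemize}
\item $I_1$: by \textbf{(P2)}, $\|I_1\|_{L^2}\le\|\phi'(u_2)\|_\infty\|w\|_{V^2}\le C(1+\rho^m)\|w\|_{V^2}$.
\item $I_2$: by the mean value theorem and \textbf{(P2)}, $\|\phi'(u_2)-\phi'(u_1)\|_\infty\le C(1+\rho^{m-1})\|w\|_\infty\le C(1+\rho^{m-1})\|w\|_{V^2}$. Multiplying by $\|A_\mu u_1\|_{L^2}\le\rho$ gives $\|I_2\|_{L^2}\le C(1+\rho^{m})\|w\|_{V^2}$.
\item $I_3$: here we need $\phi'''$. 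Since $\phi\in\mathcal C^3$ and \textbf{(P2)} holds with $j=2$, we have $|\phi'''(r)|\le C(1+|r|^{m-2})$. If $m<2$ this reads $\le C(1+|r|)^{\max(m-2,0)}$, which is still locally bounded and is enough. Hence $\|\phi''(u_2)-\phi''(u_1)\|_\infty\le \mathcal I(\rho)\|w\|_{V^2}$, and Hölder gives $\|I_3\|_{L^2}\le \mathcal I(\rho)\|w\|_{V^2}\,\|\nabla u_2\|_{L^4}^2$.
\item $I_4$: Hölder with two $L^4$ factors and \eqref{eq:ellipticity} give $\|I_4\|_{L^2}\le C\|\phi''(u_1)\|_\infty\|\nabla w\|_{L^4}\big(\|\nabla u_1\|_{L^4}+\|\nabla u_2\|_{L^4}\big)$.
\end{itemize}
Every term is therefore bounded by $\mathcal I(\rho)\|w\|_{V^2}$. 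Since $\|\phi(u_2)-\phi(u_1)\|_{V^2}=\|A_\mu(\phi(u_2)-\phi(u_1))\|_{V^0}$, this yields \eqref{eq:phi-lip-H2}.

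\textbf{Main obstacle.} The delicate point is justifying the chain-rule identity, in particular that $\phi(u)\in D(A_\mu)$ with the Dirichlet trace. This holds because $\phi(0)=0$ and $\phi(u)\in H^1_0$, while $\phi'(u)F\in H(\mathrm{div};\Omega)$ since $\phi'(u)\in W^{1,4}\cap L^\infty$. This is already handled in Lemma~\ref{lem:phi-in-H2}, and we will invoke it rather than redo it. The remaining care is only in tracking the growth exponents of $\phi'''$ when $m<2$, which are absorbed into $\mathcal I$.
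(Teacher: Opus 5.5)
Your proposal is correct and follows essentially the same argument as the paper. Both use the chain-rule identity for $A_\mu(\phi(u))$, the embedding $V^2\hookrightarrow W^{2,p}\hookrightarrow L^\infty\cap W^{1,4}$ with $p\in(\frac43,p_0)$, and term-by-term $L^2$ bounds via the mean value theorem and H\"older; the only cosmetic difference is that you freeze $\phi''(u_1)$ rather than $\phi''(u_2)$ when splitting the quadratic term. The expansion of $Q_2-Q_1$ holds by bilinearity alone, so you do not need the symmetry of $\sigma_\mu$ there.
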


\begin{proof}
Set $w\coloneqq u_2-u_1$, and fix $p\in (\frac43,p_0)$. As before, Sobolev embeddings and  the estimates of Proposition~\ref{prop:W2p-A}   
yield the estimate 
\begin{equation}\label{eq:Linfty-L4-sec8}
\|u\|_{\infty}+\|\nabla u\|_{L^4(\Omega)}\le C\|u\|_{W^{2,p}}\le C\|u\|_{{V^2}}
\end{equation}
Using \textbf{(P2)}, and \eqref{eq:Linfty-L4-sec8} we then have 
\begin{equation}\label{eq:phi-derivs-bdd-sec8}
\|\phi^{(1+j)}(\theta)\|_{\infty}\le \mathcal I(\|u_1\|_{{V^2}}+ \|u_2\|_{{V^2}}), \qquad j=0,1,2\end{equation}
for any intermediate function $\theta$ between $u_1$ and $u_2$.
The chain rule reads
\begin{equation}\label{eq:chain-Amu-sec8}
A_\mu(\phi(u))=\phi'(u)A_\mu u-\phi''(u)(\sigma_\mu\nabla u)\cdot\nabla u
\qquad\text{in }L^2(\Omega).
\end{equation}
Applying \eqref{eq:chain-Amu-sec8} to $u_2$ and $u_1$ and subtracting gives
\[
A_\mu(\phi(u_2)-\phi(u_1))=\mathrm{I}+\mathrm{II}-\mathrm{III},
\]
where
\[
\begin{split}&
\mathrm{I}\coloneqq \phi'(u_2)A_\mu w,
\qquad
\mathrm{II}\coloneqq \left(\phi'(u_2)-\phi'(u_1)\right)A_\mu u_1,
\\
&
\mathrm{III}\coloneqq
\phi''(u_2)(\sigma_\mu\nabla u_2)\cdot\nabla u_2
-\phi''(u_1)(\sigma_\mu\nabla u_1)\cdot\nabla u_1.
\end{split}
\]
We estimate these terms in $L^2$.

\smallskip
\noindent\emph{Term $\mathrm{I}$.} Using \eqref{eq:phi-derivs-bdd-sec8},
\[
\|\mathrm{I}\|_{V^0}\le \|\phi'(u_2)\|_{\infty}\|A_\mu w\|_{V^0}
\le  \mathcal I(\|u_1\|_{{V^2}}+ \|u_2\|_{{V^2}})\|w\|_{{V^2}}.
\]
\smallskip
\noindent\emph{Term $\mathrm{II}$.} By the mean value theorem,
$\phi'(u_2)-\phi'(u_1)=\phi''(\theta)w$ for an intermediate function $\theta$, hence a combination of \eqref{eq:Linfty-L4-sec8} and \eqref{eq:phi-derivs-bdd-sec8} entails
\[
\|\mathrm{II}\|_{V^0}
\le \|\phi''(\theta)\|_{\infty}\|w\|_{\infty}\|A_\mu u_1\|_{V^0}
\le \mathcal I (\|u_1\|_{{V^2}} + \|u_2\|_{{V^2}})\|w\|_{{V^2}}.
\]
\smallskip
\noindent\emph{Term $\mathrm{III}$.} Rewrite
\[
\mathrm{III}=
\phi''(u_2)\left(Q_2-Q_1\right)+\left(\phi''(u_2)-\phi''(u_1)\right)Q_1,
\qquad
Q_j\coloneqq (\sigma_\mu\nabla u_j)\cdot\nabla u_j, \quad j=1,2.
\]
For the first part, note that
\[
Q_2-Q_1=(\sigma_\mu\nabla w)\cdot\nabla u_2+(\sigma_\mu\nabla u_1)\cdot\nabla w,
\]
so, by ellipticity \eqref{eq:ellipticity} and H\"older's inequality
\[
\|\phi''(u_2)(Q_2-Q_1)\|_{V^0}
\le \|\phi''(u_2)\|_\infty\|\nabla w\|_{L^4(\Omega)}\left(\|\nabla u_2\|_{L^4(\Omega)}+\|\nabla u_1\|_{L^4(\Omega)}\right)
\le \mathcal I(\|u_1\|_{{V^2}}+ \|u_2\|_{{V^2}})\|w\|_{{V^2}},
\]
having relied upon \eqref{eq:Linfty-L4-sec8} and \eqref{eq:phi-derivs-bdd-sec8}.
For the second part, again by the mean value theorem,
$\phi''(u_2)-\phi''(u_1)=\phi'''(\zeta)w$ for an intermediate $\zeta$, hence
\[
\|(\phi''(u_2)-\phi''(u_1))Q_1\|_{V^0}
\le \|\phi'''(\theta)\|_\infty\|w\|_{\infty}\|Q_1\|_{V^0}.
\]
Finally, $\|Q_1\|_{V^0}\leq C \|\nabla u_1\|_{L^4(\Omega)}^2\le  \mathcal I(\|u_1\|_{{V^2}} )$ by \eqref{eq:Linfty-L4-sec8},
and it follows that the last display is $ \leq \mathcal I(\|u_1\|_{{V^2}}+ \|u_2\|_{{V^2}})\|w\|_{{V^2}}$ as well.
Collecting the estimates for $\mathrm{I},\mathrm{II},\mathrm{III}$ finally yields \eqref{eq:phi-lip-H2}.
\end{proof}

\begin{lemma} \label{lem:phi-prime-prod-H1}
Let $z\in \mathbb S_{\mathcal Z}$ be arbitrary and write $  S(t)z=(u(t),\eta^t).$ For each $v\in {V^2}$,
\begin{equation}\label{eq:phi-prime-prod-H1}
\|\phi'(u(t))v\|_{{V^1}}\le C\|v\|_{{V^2}}, \qquad t\geq t_\star.
\end{equation}
\end{lemma}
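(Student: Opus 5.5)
We will bound $\|\phi'(u(t))v\|_{V^1}$ by $\|\nabla(\phi'(u)v)\|_{L^2}$, using that $\phi'(u)v\in H^1_0(\Omega)$ because $v$ vanishes on $\partial\Omega$ and $\phi'(u)\in W^{1,4}\cap L^\infty$. By \eqref{eq:form-bounds}, $\|\cdot\|_{V^1}$ is equivalent to the $L^2$ norm of the gradient with constants depending on $k$. The product rule then gives
\[
\nabla(\phi'(u)v)=\phi''(u)v\,\nabla u+\phi'(u)\nabla v,
\]
so that
\[
\|\phi'(u)v\|_{V^1}\le C\left(\|\phi''(u)\|_\infty\|v\|_\infty\|\nabla u\|_{L^2}+\|\phi'(u)\|_\infty\|\nabla v\|_{L^2}\right).
\]

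The key input is a uniform-in-time bound on $\|u(t)\|_{V^2}$ for $t\ge t_\star$ along trajectories starting in $\mathbb S_{\mathcal Z}$. Proposition~\ref{thm:regular-exp-attract-sec7}(i) gives $S(t)z\in\mathbb S_{\mathcal Z}=z_f+\mathbb B_{\mathcal Z}(R_\star)$ for $t\ge t_\star$. Since $\mathcal Z\hookrightarrow\mathcal V$ continuously and $z_f\in\mathcal V$ is a fixed structural vector, this yields $\|u(t)\|_{V^2}\le \|u_f\|_{V^2}+CR_\star\le C$ with a structural constant. Alternatively, $\mathbb S_{\mathcal Z}$ is bounded in $\mathcal V$, and \eqref{eq:V-smoothing-est-V0} gives the same uniform bound for all $t\ge0$. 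In either case, \eqref{eq:Linfty-L4-sec8}, which comes from Proposition~\ref{prop:W2p-A} with $p\in(\frac43,2)$ and Sobolev embedding, gives $\|u(t)\|_\infty+\|\nabla u(t)\|_{L^4}\le C$. Assumption \textbf{(P2)} then yields $\|\phi'(u(t))\|_\infty+\|\phi''(u(t))\|_\infty\le C$.

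For $v\in V^2$, the same embedding \eqref{eq:Linfty-L4-sec8} gives $\|v\|_\infty\le C\|v\|_{V^2}$. Poincaré's inequality \eqref{eq:poincare-Hscale} in the Hilbert scale gives $\|\nabla v\|_{L^2}\le C\|v\|_{V^1}\le C\|v\|_{V^2}$. Inserting these bounds together with $\|\nabla u\|_{L^2}\le C$ into the product-rule estimate yields \eqref{eq:phi-prime-prod-H1}.

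The main obstacle is the $L^\infty$ control of $v$ and $u$. Rough coefficients do not provide this from the abstract graph space $V^2$ alone, so we must use the $W^{2,p}$ theory of Proposition~\ref{prop:W2p-A} under \eqref{e:regmu} with $p_0=2$. Everything else is the elementary chain rule, justified for $u\in W^{2,p}\subset W^{1,4}\cap L^\infty$ and $\phi\in\mathcal C^3$.
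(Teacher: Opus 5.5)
Your proposal is correct and follows essentially the paper's own route. You get uniform $\mathcal V$-bounds on $u(t)$ for $t\ge t_\star$ from the positive invariance and $\mathcal V$-boundedness of $\mathbb S_{\mathcal Z}$, then use the $W^{2,p}$-based embedding \eqref{eq:Linfty-L4-sec8} to control $\|\phi'(u)\|_\infty$, $\|\phi''(u)\|_\infty$ and $\|v\|_\infty$, and finish with the product rule for $\nabla(\phi'(u)v)$. The only cosmetic difference is that you drop the $L^2$ part of the $V^1$-norm via the gradient-norm equivalence on $H^1_0$, while the paper keeps that term and bounds it trivially by $\|\phi'(u)\|_\infty\|v\|_{V^0}$.
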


\begin{proof} Recalling the structural constant $\mathcal V $-boundedness of $\mathbb S_{\mathcal Z}$, the positive invariance property \eqref{eq:Sinvariance8}, the estimate \eqref{eq:Linfty-L4-sec8} together with assumption (P2), we note that 
\begin{equation}
	\label{eq:S8uitb}
\|\phi'(u(t))\|_{\infty}+\|\phi''(u(t))\|_{\infty}
+ \|\nabla u(t)\|_{L^4(\Omega)}
\le C,    \qquad t\geq t_\star.
\end{equation}
Let $v\in {V^2}$. Ellipticity entails that  ${V^1}\simeq H^1_0(\Omega)$,
\[
\|\phi'(u)v\|_{{V^1}}\leq C\|\phi'(u)v\|_{V^0}+ C\|\nabla(\phi'(u)v)\|_{V^0}.
\]
We estimate
\[
\|\phi'(u)v\|_{V^0}\le \|\phi'(u)\|_{\infty}\|v\|_{V^0}\le C\|v\|_{{V^2}}.
\]
Moreover,
\[
\nabla(\phi'(u)v)=v\phi''(u)\nabla u+\phi'(u)\nabla v,
\]
hence, by H\"older and \eqref{eq:S8uitb}, \[
\|\nabla(\phi'(u)v)\|_{V^0}
\le \|\phi''(u)\|_\infty\|v\|_{\infty}\|\nabla u\|_{V^0}
+\|\phi'(u)\|_\infty\,\|\nabla v\|_{V^0}
\le C\left(\|v\|_{\infty}+\|\nabla v\|_{V^0}\right).
\]
Finally, by Proposition~\ref{prop:W2p-A} and Sobolev embedding, as before
$\|v\|_{\infty}+\|\nabla v\|_{V^0}\le C\|v\|_{{V^2}}$,
and \eqref{eq:phi-prime-prod-H1} follows.
\end{proof}


We now indicate how Lemmas~\ref{lem:phi-lip-H2}--\ref{lem:phi-prime-prod-H1} enter the verification of
\textbf{(S)}--\textbf{(T)}. 

\subsubsection*{Verifying \textbf{\emph{(S)}}}
Fix $z_1,z_2\in \mathbb S_{\mathcal Z}$ and set
\[
S(t)z_i=(u_i(t),\eta_i^t),\qquad W(t)\coloneqq S(t)z_2-S(t)z_1=(w(t),\bar\eta^t).
\]
As in \cite[\S8]{CGDP2010}, decompose $W(t_\star)$ as in \eqref{eq:squeeze-decomp-sec8} by writing the variation-of-constants
formula for the difference system and splitting the contribution into a decaying linear part and a regular part.
More precisely, let $L(t)$ denote the linear homogeneous semigroup generated by  \eqref{eq:CG-QCext} with $g\equiv 0$. Set
\[
\ell_1 \coloneqq L(t_\star)\left(z_2-z_1\right),
\qquad
\ell_2 \coloneqq W(t_\star)-\ell_1.
\]
The estimate $\|\ell_1\|_{\mathcal V}\le \vartheta\|z_2-z_1\|_{\mathcal V}$ for some $\vartheta\in(0,1)$ follows from
the exponential stability of $L(t)$ on $\mathcal V$, cf.\ \cite[Prop.~6.5]{CGDP2010}, which uses only the abstract Hilbert scale
associated with $A_\mu$ and the memory assumptions.

It remains to bound $\ell_2$ in $\mathcal Z$. The element $\ell_2$ corresponds to the solution, at time $t_\star$, of  \eqref{eq:CG-QCext} with $g=\phi(u_1)-\phi(u_2)$.   The $\mathcal Z$-bound
\[
\|\ell_2\|_{\mathcal Z}\le C\|z_2-z_1\|_{\mathcal V}
\]
is obtained as in \cite[\S8]{CGDP2010},  once
Lemma~\ref{lem:phi-lip-H2} is used to deduce that  
\[
\|\phi(u_2(t))-\phi(u_1(t))\|_{{V^2}}\le C\|w(t)\|_{{V^2}}\le C\|W(t)\|_{\mathcal V}, \qquad t\ge t_\star
\]
The remaining 
$\mathcal Z$--energy reconstruction and  the memory compactness functional estimates are identical to \cite[\S8]{CGDP2010} and are omitted.
Thus \textbf{(S)} holds.
\subsubsection*{Verifying \textbf{\emph{(T)}}}
Fix $z\in \mathbb S_{\mathcal Z}$ and write $S(t)z=(u(t),\eta^t)$. 
It suffices to show that \begin{equation}\label{eq:time-der-unif}
\sup_{t\in[t_\star,2t_\star]}\ \left\|(\pt u(t),\pt\eta^t)\right\|_{\mathcal V}\le C, 
\end{equation}
as the Lipschitz estimate \eqref{eq:time-Lip-sec8} then follows by integration in time.
Differentiating \eqref{eq:CG-QC} in time, we observe that after  time-shift $t\mapsto t_\star+t$, the pair
\[
U(t)\coloneqq \pt u(t+t_\star),\qquad \zeta(t)\coloneqq \pt\eta^{t+t_\star}
\]
satisfies 
\eqref{eq:aux-CG} with $f=0$ and forcing
\begin{equation}\label{eq:g-time-der}
g(t)=-\phi'(u(t_\star+t))U( t).
\end{equation}
 Since $S(t)\mathbb S_{\mathcal Z}\subset\mathbb S_{\mathcal Z}$ for $t\ge t_\star$, the bounds
defining $\mathbb S_{\mathcal Z}$ and Lemma~\ref{lem:phi-prime-prod-H1} imply
\[
\|g(t)\|_{{V^1}}\le C\|U(t)\|_{{V^2}}\qquad \forall\,t\ge t_\star.
\]
Applying the energy inequality \eqref{eq:F-damped} at level $r=1$ to $(U,\zeta)$ yields a
translation-bounded estimate of the form
\[
\sup_{t\ge t_\star}\int_t^{t+1}\|(U(\tau),\zeta^\tau)\|_{\mathcal V}^2\,\d\tau \le C,
\]
with $C$ depending only on the structural parameters and $R_\star$.
Hence Lemma~\ref{lem:CGDP55-56-updated}\textup{(A)} applied to $(U,\zeta)$ yields \eqref{eq:time-der-unif} and completes the proof of (i), (iii) and (iv) of Theorem \ref{thm:main-exp-attractor-V}. 

Point (ii) is easily seen as a consequence of the other properties. Indeed,  observe that  $\mathfrak E$ satisfies
\[
\mathfrak E\subset \mathbb S_{\mathcal Z}=z_f+\mathbb B_{\mathcal Z}(R_\star),
\]
hence $\mathfrak E-z_f$ is bounded in $\mathcal Z={V^3}\times \mathcal W$, proving in particular the ${V^3}$-boundedness.
Moreover, under \eqref{e:regmu} with $p_0=2$, Proposition~\ref{prop:W2p-A} yields the continuous embedding
${V^2}=D(A_\mu)\hookrightarrow W^{2,p}(\Omega)$ for every $1<p<2$; therefore the identity map
\[
\mathcal V={V^2}\times \mathcal M^2 \ \longrightarrow\ W^{2,p}(\Omega)\times \mathcal H^2
\]
is continuous, and since $\mathfrak E$ has finite fractal dimension in $\mathcal V$ by (i),
it has finite fractal dimension in $W^{2,p}(\Omega)\times \mathcal H^2$ as well. This proves (ii).

 \subsection{Proof of Theorem~\ref{thm:main-exp-attractor-V}, points (v), (vi)}
 
Let $R>0$ and write $B_R\coloneqq B_{\mathcal H^1}(R)$. By Proposition~\ref{prop:V-smoothing},
\[
S(1)B_R \subset B_{\mathcal V}(R_1), \qquad R_1=\mathcal I(R).
\]
Hence for $t\ge 1$, using point \textup{(iv)}  and the continuous embedding
$\mathcal V\hookrightarrow \mathcal H^1$,
\[
\dist_{\mathcal V}(S(t)B_R,\mathfrak E)
=\dist_{\mathcal V}(S(t-1)S(1)B_R,\mathfrak E)
\le C\,\dist_{\mathcal V}(S(t-1)B_{\mathcal V}(R_1),\mathfrak E)
\le \mathcal I(R)\e^{-ct}.
\]
On the other hand for $t\leq 1 $, Propositions \ref{prop:main-H1-link} and  \ref{prop:V-smoothing} tell us exactly that
\[
\dist_{\mathcal H^1}(S(t)B_R,\mathfrak E) +
\sqrt{t}\dist_{\mathcal V}(S(t)B_R,\mathfrak E) \leq \mathcal I(R)
\]
Combining the last two displays yields point \textup{(v)}.
For point (vi), 
let $R>0$ and set $B^0_R\coloneqq B_{\mathcal H^0}(R)$. By Proposition~\ref{prop:main-H1-link},
\[
\dist_{\mathcal H^0}(S(t)B^0_R,\mathbb B_1)\le \mathcal I(R)\e^{-ct}, \qquad t\ge 0,
\]
for the $\mathcal H^1$-absorbing ball $\mathbb B_1$.
Since $\mathbb B_1\subset \mathcal H^1$, applying point \textup{(v)} to $\mathbb B_1$ and using transitivity of $\mathcal H^0$ exponential attraction \cite{EMZ}, which is justified by using the standard exponential-in-time continuous dependence estimate for \eqref{eq:CG-QC} in the $\mathcal H^0$-norm.
The latter follows from the coercivity/monotonicity structure in \textbf{(P1)} and does not require additional elliptic regularity.
This argument is the same as in \cite{AMN} for the Euclidean analogue and is thus omitted.

\subsection{Proof of Corollary~\ref{cor:main-global-attractors}}
Let $\mathfrak E\subset\mathcal V$ be the exponential attractor provided by
Theorem~\ref{thm:main-exp-attractor-V}. In particular, $\mathfrak E$ is compact in $\mathcal V$ and
positively invariant. Then  $\mathcal A_j$, the $\omega$--limit set of $ \mathfrak E$ in $\mathcal H^j$ for $j=0,1$, is the $\mathcal H^j$ global attractor for $S(t)$. As $\mathcal A_0 $ is contained in $\mathfrak E$ which is bounded in $\mathcal H^1$, it must be attracted by $\mathcal A_1$. Invariance then forces $\mathcal A_0\subset \mathcal A_1$. The obvious converse inclusion yields the equality $\mathcal A_0=\mathcal A_1 \eqqcolon \mathcal A$ and completes this part of the proof. The fact that $S(t)$ restricts on $\mathcal A$ to a group of operators is a standard consequence of the backwards uniqueness property enjoyed by differential systems with memory. This is observed in \cite{AMN} for the Euclidean analogue,  and is preserved for the semigroup generated by \eqref{eq:CG-QC} as well. The proof of the corollary is thus complete.
 \bibliography{BeltramiMemory}
\bibliographystyle{amsplain}
\end{document}